\documentclass[11pt]{amsart}
\usepackage[margin=2.5cm]{geometry}
\usepackage{amssymb}
\usepackage{bm}
\usepackage[centertags]{amsmath}
\usepackage{amsfonts}
\usepackage{amsthm}
\usepackage{mathrsfs}
\usepackage{mathtools,xfrac}
\linespread{1.2} 
\usepackage[usenames,dvipsnames,svgnames,table]{xcolor}
\usepackage[mathscr]{euscript}

\DeclareMathOperator\w{w}
\DeclareMathOperator\rng{rng}

\DeclareMathOperator\supp{supp}
\DeclareMathOperator\sgn{sgn}
\DeclareMathOperator\suc{succ}
\DeclareMathOperator{\dist}{dist}
\DeclareMathOperator{\spa}{span}

\newcommand{\mt}{\mathcal{T}}

\newcommand{\bfg}{\mathbf{g}}

\numberwithin{equation}{section}

\newtheorem{theorem}{Theorem}[section]
\newtheorem*{theorem*}{Main Theorem}
\newtheorem{lemma}[theorem]{Lemma}
\newtheorem{proposition}[theorem]{Proposition}
\newtheorem{corollary}[theorem]{Corollary}

\newtheorem{definition}[theorem]{Definition}

\newtheorem{notation}{Notation}

\newtheorem{remark}[theorem]{Remark}

\newcommand{\N}{\mathbb{N}}
\newcommand{\R}{\mathbb{R}}

\usepackage[utf8]{inputenc}

\author{Anna Pelczar-Barwacz}
\thanks{The research of the author was supported by the grant of the National Science Centre (NCN), Poland, no. UMO-2020/39/B/ST1/01042}

\email{anna.pelczar@uj.edu.pl}
\address{Jagiellonian University, Faculty of Mathematics and Computer Science, Institute of Mathematics,  {\L}ojasiewicza 6, 30-348 Krak\'ow,  Poland}

\usepackage[T1]{fontenc}
\oddsidemargin  0pt \evensidemargin 0pt \marginparwidth 1in
\marginparsep 0pt

\topmargin 0cm

\textheight 22cm \textwidth 16cm

\numberwithin{subsection}{section}
\numberwithin{equation}{section}

\begin{document}
\title{A Banach space with an  infinite dimensional reflexive quotient algebra $\mathscr{L}(X)/\mathscr{SS}(X)$} 

\begin{abstract}
We construct a Banach space $X$ of Gowers-Maurey type such that the algebra of bounded operators $\mathscr{L}(X)$ is a direct sum of an  infinite dimensional reflexive Banach space and the operator ideal of strictly singular operators $\mathscr{SS}(X)$. 
\end{abstract}
\maketitle

\section*{Introduction}

The study of the structure of the closed operator ideals in the Banach algebra $\mathscr{L}(X)$ on a Banach space $X$, started in early 1940's,  developed  intensively in recent years. One of the main direction of research concentrates on determining Banach algebras which can be represented as a quotient algebra of $\mathscr{L}(X)$ for some Banach space $X$, with special emphasis on the Calkin algebra of $X$, that is $\mathscr{C}(X)=\mathscr{L}(X)/\mathscr{K}(X)$, where $\mathscr{K}(X)$ denotes the ideal of compact operators. Concerning this direction, powerful examples solving several long-open structural questions on Banach spaces were built mainly within the framework of Gowers-Maurey and Argyros-Haydon type spaces theory. 

The Gowers-Maurey method,  starting with techniques introduced by Th.~Schlumprecht \cite{s1} and developed widely, provides a list of Banach spaces sharing many extreme properties, among others with an explicitly described quotient algebra $\mathscr{L}(X)/\mathscr{SS}(X)$, where $\mathscr{SS}(X)$ denotes the closed operator ideal of  strictly singular operators on $X$ (cf.  \cite{gm,gm2,ad}). Recall that an operator $S$ between Banach spaces $U,V$ is called strictly singular if there is no infinite dimensional subspace $W\subset U$ such that the restriction $S|_W:W\rightarrow S(W)$ is an isomorphism. The Gowers-Maurey space $X_{GM}$ provided a negative 
answer to the unconditional
basic sequence problem and turned out to satisfy a much stronger
property than lack of unconditional basic sequences, i.e.  hereditary indecomposability (HI; no two closed infinite dimensional subspaces of $X_{GM}$ form a direct sum)  \cite{gm}. In close relation to this fact, the quotient algebra $\mathscr{L}(X_{GM})/\mathscr{SS}(X_{GM})$ is one-dimensional \cite{gm} 
(whereas $\mathscr{SS}(X_{GM})$ itself has rich structure, containing $2^\mathfrak{c}$ many pairwise distinct closed operator ideals, \cite{mp}). The same result holds also in the case of $\ell_1$-asymptotic HI space of Argyros-Deliyanni \cite{ad}; in general for any HI space $X$ the quotient algebra $\mathscr{L}(X)/\mathscr{SS}(X)$ is isomorphic  either to $\mathbb{R}$, $\mathbb{C}$ or $\mathbb{H}$  \cite{f}. 

The general theory developed in \cite{gm2}, cf. also \cite{ma}, proves in particular that the convolution algebra $\ell_1(\mathbb{Z})$, among other examples, can be represented as the quotient algebra $\mathscr{L}(X)/\mathscr{SS}(X)$ for some Banach spaces $X$, with the left and right shifts generating $\mathscr{L}(X)$ modulo strictly singular operators. Recently P.Motakis \cite{mo}, within the framework of Gowers-Maurey spaces, for any compact metric $K$ built a space with the quotient algebra $\mathscr{L}(X)/\mathscr{SS}(X)$ isometric to the space $C(K)$ of continuous functions on $K$, using diagonal operators on $X$. Examples of non-separable spaces $X=C(K)$ with the quotient algebra  $\mathscr{L}(X)/\mathscr{SS}(X)$ isometric to $X$  were constructed in \cite{k,p}. 

The results concerning Gowers-Maurey type spaces were lifted and augmented on the level of Argyros-Haydon type spaces, with the ideal $\mathscr{SS}(X)$ replaced by $\mathscr{K}(X)$. The construction of the celebrated Argyros-Haydon space $X_{AH}$, adapting the Gowers-Maurey method in the setting of  Bourgain-Delbaen $\mathscr{L}_\infty$-spaces, solves the fundamental "scalar+compact" problem \cite{ah}, that is yields a one-dimensional Calkin algebra $\mathscr{C}(X_{AH})$ \cite{ah}. The space $X_{AH}$ served as the basis, in the process of combining finitely many carefully chosen variants of $X_{AH}$, certain iterative constructions of Argyros-Haydon sums of variants of $X_{AH}$ or similar procedures,  for building other examples of Banach spaces with the Calkin algebra of prescribed form, as any finite dimensional semi-simple complex algebra \cite{kl}, any space $C(K)$ with countable compact $K$ \cite{mpz} or any non-reflexive Banach spaces with an unconditional basis, as well as some HI spaces \cite{mpt}.  

On the other hand, direct lifting constructions of Gowers-Maurey type spaces with specific quotient algebras (over the ideal of strictly singular operators) to the setting of Bourgain-Delbaen spaces provides also  powerful examples, as a Banach space with Calkin algebra isomorphic to the convolution algebra $\ell_1(\N_0)$ \cite{t2}, whose construction follows the approach of \cite{gm2}, or, for any compact $K$, a Banach space with Calkin algebra isometric to space $C(K)$ \cite{mo}. 

In the context of the results described above the natural question concerns the existence of a Banach space with an infinite dimensional reflexive Calkin algebra, cf. \cite{mo, mpt, mpz}. As it was mentioned above, constructions of Gowers-Maurey type spaces are often adaptable to the setting of Argyros-Haydon spaces, which puts forward - as possibly the first step to solve the former problem - the question on the existence of a Gowers-Maurey type space $X$ with an infinite dimensional reflexive quotient algebra $\mathscr{L}(X)/\mathscr{SS}(X)$. The aim of this paper is to present a positive answer to this last question. 

\begin{theorem*}
    There is a Banach space $X$ with a basis so that $\mathscr{L}(X)$ is a direct sum of $\mathscr{SS}(X)$ and an infinite dimensional reflexive space $V$ with an unconditional basis $(I_s)_{s=0}^\infty$. The space $V$  as a Banach algebra is a unitisation of the space $\overline{\spa }\{I_s: s=1,2,\dots\}$ endowed with pointwise multiplication. 
\end{theorem*}

More precisely each $I_s$ is a projection on an infinite dimensional subspace $X_s\subset X$ spanned by some subsequence of the basis of $X$ and the basic sequence  $(I_s)_{s=1}^\infty$ is equivalent to the canonical basis of the dual space to a certain mixed Tsirelson space $Y$ defined in the sequel. Notice that in the presented approach in the place of shifts or diagonal operators used in the previous methods as operators generating $\mathscr{L}(X)$ modulo strictly singular operators we use projections on infinite dimensional subspaces of $X$. 

The construction of $X$ is based on Gowers-Maurey method of constructing HI (hereditary indecomposable) spaces, with two  adaptations. The first one   guarantees the existence of a sequence $(X_s)_{s=1}^\infty$ of totally incomparable 1-complemented Gowers-Maurey subspaces  of $X$, whose union is linearly dense in $X$. This step without the second ingredient would produce simply a Gowers-Maurey sum of spaces $(X_s)_s$, thus with $(X_s)_s$   forming an infinite dimensional Schauder decomposition of $X$ and the quotient algebra $\mathscr{L}(X)/\mathscr{SS}(X)$ isomorpic to the space $c_0$. 

The second ingredient added to the construction forces the equivalence of the sequence $(I_s)_{s=1}^\infty$ of projections on the subspaces $(X_s)_s$, to the canonical basis of the space $Y^*$, where $Y$ is a certain mixed Tsirelson space,
by adding to the norming set of $X$ a certain "ground" set  of functionals of $c_0$ type, that somewhat violate the "horizontal" (i.e. with respect to $(X_s)_s$) structure of $X$. This "ground" set in the norming set of $X$ guarantees a rich and closely related finite representation of $Y$ and $\ell_1$ in $X$. Precise choice of the parameters defining $X,Y$ yields the desired equivalence. 

Adding the second ingredient implies in particular that $(X_s)_{s=1}^\infty$ do not form a Schauder decomposition of $X$. Therefore justifying Gowers-Maurey "horizontal" behaviour of $X$, i.e. properties concerning sequences in $X$ that are block with respect to the sequence $(X_s)_{s=1}^\infty$, requires more technical work. The Gowers-Maurey "horizontal" properties of $X$ combined with Gowers-Maurey properties of all $X_s$, $s=1,2,\dots$, and bounded completeness of $(I_s)_{s=1}^\infty$  imply the desired representation of $\mathscr{L}(X)$ as the direct sum of $V$ and $\mathscr{SS}(X)$.

The author is very grateful to Pavlos Motakis for suggesting using $\ell_1$ instead of an originally introduced mixed Tsirelson space $Z$ as the second, after $Y$, auxiliary space, which essentially simplifies the reasonings. The author is also very grateful to the referee of the paper for thorough reading of the manuscript and suggesting several improvements of its presentation.

\ 

The paper is organised as follows. In Section 1. we recall the notion of mixed Tsirelson spaces and spaces of Gowers-Maurey type, as well as their basic properties. Section 2. is devoted to the definition of $X$ and its basic properties, including the key Lemma \ref{key}. Section 3. concentrates on the properties of  sequence of projections $(I_s)_{s=1}^\infty\subset\mathscr{L}(X)$. In Section 4. we prove the Gowers-Maurey type properties of the space $X$ adapting the classical reasoning with the aid of Lemma \ref{existence-g-separated}, and finally in Section 5. we examine the structure of $\mathscr{L}(X)$. The main theorem follows by Corollaries \ref{main1} and \ref{main2}.

We recall now some basic notation. 
By $c_{00}$ we denote the vector space of sequences in $\R$ which are eventually  zero, by $(e_n)_n$ and $(e_n^*)_n$ resp., we denote the unit vector sequence in $c_{00}$, treated  resp. as vectors or as functionals on $c_{00}$. The {support} of a $x=(a_i)_i\in c_{00}$ is the set $\supp (x)=\{i\in\N: a_i\neq 0\}$, the {range} $\rng(x)$ - the smallest interval in $\N$ containing the support of $x$. For any
$x=(a_i)_i\in c_{00}$ and $E\subset\N$ let $Ex$ be the vector defined by $(Ex)_i=a_i$ if $i\in E$ and $(Ex)_i=0$ otherwise. We write $n<x$ for $n\in\N$ and $x\in c_{00}$ if $n<\max\supp (x)$ and we write $x<y$ for vectors $x,y\in c_{00}$, if $\max\supp
(x)<\min \supp (y)$. A {block sequence } is any 
sequence $(x_n)_n\subset c_{00}$ satisfying $x_{1}<x_{2}<\dots$, a
{block subspace } of $c_{00}$ - any 
closed subspace spanned by an infinite block sequence. Given infinite $J\subset \N$ let $c_{00}(J)=\{x\in c_{00}: \supp (x)\subset J\}$ and $c_{00}^\mathbb{Q}$, resp. $c_{00}^\mathbb{Q}(J)$, be the $\mathbb{Q}$-vector space of elements of $c_{00}$, resp. $c_{00}(J)$, with coefficients in $\mathbb{Q}$. A subconvex combination of vectors $(x_i)_{i=1}^n\in c_{00}$ is any vector of the form $\sum_{i=1}^na_ix_i$ for some scalars $(a_i)_{i=1}^n\subset [0,1]$ with  $\sum_{i=1}^na_i\leq 1$. 

We say that a basic sequence $(e_n)_n$ $C$-dominates a basic sequence $(\tilde{e}_n)_n$ if the mapping $T:e_n\to \tilde{e}_n$, $n\in\N$, extends to a bounded operator on $\spa \{e_n: n\in\N\}$ with norm not exceeding $C$. If additionally $(\tilde{e}_n)_n$ $C$-dominates $(e_n)_n$ we say that these basic sequences are $C$-equivalent.

A {tree}  is a non-empty partially ordered  set $(\mt, \preceq)$ for which the set $\{ y \in \mt:y \preceq x \}$ is linearly ordered and finite for each $x \in \mt$.
The {root} is the smallest element of the tree (if it exists). The {terminal}  nodes are the maximal elements. A {branch} of $\mt$ is any maximal linearly ordered set in $\mt$.  The {immediate successors}  of $t \in \mt$ are all  the nodes $r \in \mt$ such that $t \prec r$ but there is no $s \in \mt$ with $t \prec s \prec r$, the set of immediate successors of $t$ is denoted by $succ (t)$.

\section{Mixed Tsirelson spaces and spaces of Gowers-Maurey type}

We recall the definition and basic properties of mixed Tsirelson spaces and  spaces built on their basis,  following \cite{ad,atod}. 

\begin{notation}\label{properties-integers}
For sequences $(n_j)_j, (m_j)_j\subset\N$ we consider the following properties:
\begin{itemize}
    \item[$(*)$] $m_1=2$, $n_1\geq 4$, $m_{j+1}\geq m_j^{m_j}$, $n_{j+1}\geq (10n_j)^{m_{j+1}^2}$ for all $j\in\N$,
    \item[$(**)$] $m_j,n_j\in\{2^k:k\in\N\}$ and for any $k\in\N$ there is $j\in\N$ with $n_{2j}=2^{km}$ for some $m\in\N$.
\end{itemize}
\end{notation}

Fix sequences $(\tilde{n}_j)_j, (\tilde{m}_j)_j$ satisfying $(*)$ of Notation \ref{properties-integers}. 

\begin{definition}[Mixed Tsirelson space]  Let $\tilde{K}\subset c_{00}$ be the smallest set satisfying the following:
\begin{enumerate}
    \item $(\pm e_n^*)_n\subset \tilde{K}$,
    \item for any $j\in\N$ and $f_1<\dots<f_d$ in $\tilde{K}$ with $d\leq \tilde{n}_j$ we have $\tilde{m}_j^{-1}(f_1+\dots+f_d)\in \tilde{K}$.
    \item $\tilde{K}$ is closed under restriction to intervals and taking subconvex combinations with rational coefficients. 
\end{enumerate}
Let $\|\cdot\|$ be the norm on $c_{00}$ with the norming set $\tilde{K}$, i.e. $\|\cdot\|=\sup\{|f(\cdot)|:f\in \tilde{K}\}$. The mixed Tsirelson space
$T[(\mathcal{A}_{\tilde{n}_j},\tilde{m}_j^{-1})_{j\in\N}]$ is the completion of $(c_{00}, \|\cdot\|)$. The set $\tilde{K}$ we shall call the canonical norming set of $T[(\mathcal{A}_{\tilde{n}_j},\tilde{m}_j^{-1})_{j\in\N}]$. 
\end{definition}

It is well-known that under the above assumptions the space $T[(\mathcal{A}_{\tilde{n}_j},\tilde{m}_j^{-1})_{j\in\N}]$ 
is a reflexive space with an unconditional basis formed by the unit vectors $(e_n)_n$ \cite[Thm I.10]{atod}. Moreover, the set $\tilde{K}$ is closed under restrictions to arbitrary subsets.

Any functional $f\in\tilde{K}$ produced as in (2) above, i.e. of the form $f=\tilde{m}_j^{-1}(f_1+\dots+f_d)$ is called weighted, with the weight defined as $\w(f)=\tilde{m}_j^{-1}$. We note that the weight of a functional in $\tilde{K}$ is not necessarily uniquely determined. 

\begin{definition}[The tree-analysis] Let $f\in \tilde{K}$. By a tree-analysis of
$f$ we mean a finite family $(f_t)_{t\in \mt}\subset\tilde{K}$ indexed by a finite tree $\mt$ with a
unique root $0\in \mt$ such that 
\begin{enumerate}
\item $f_{0}=f$,
\item $t\in \mt$ is terminal iff  $f_t\in (\pm e_n^*)$,
\item for every non-terminal $t\in \mt$ one of the following holds true
\begin{enumerate}
    \item $f_t=\tilde{m}_j^{-1}E\sum_{r\in \suc(t)}f_r$, for some $j\in\N$ with $\# \suc(t)\leq \tilde{n}_j$, an interval $E\subset \N$, and a sequence $(f_r)_{r\in \suc(t)}\subset \tilde{K}$ which is block with respect to an appropriate linear ordering of $\suc(t)$, 
    \item $f_t= \sum_{r\in \suc(t)}c_rf_r$ for some  scalars
 $(c_r)_r\subset [0,1]\cap\mathbb{Q}$ with $\sum_rc_r\leq 1$ and functionals $ (f_r)_{r\in \suc(t)}\subset \tilde{K}$. 
\end{enumerate}
\end{enumerate}
\end{definition}

We shall use also spaces of Gowers-Maurey type, introduced in \cite{gm}. We follow  the approach of \cite{atod}. We fix first an injective function $\tilde{\sigma}: \{(f_1,\dots,f_d)\subset c_{00}^\mathbb{Q}: f_1<\dots<f_d\}\to 2\N$ such that 
$\tilde{\sigma}((f_1,\dots,f_d))\geq \max\supp (f_d)\|f_1+\dots+f_d\|_\infty^{-1}$ for any non-zero $(f_1,\dots,f_d)$.
\begin{definition}[Mixed Tsirelson space]  Let $\tilde{K}_{\tilde{\sigma}}\subset c_{00}$ be the smallest set satisfying the following:
\begin{enumerate}
    \item $(\pm e_n^*)_n\subset \tilde{K}_{\tilde{\sigma}}$,
    \item for any $j\in\N$ and $f_1<\dots<f_d$ in $\tilde{K}_{\tilde{\sigma}}$ with $d\leq n_{2j}$ we have $\tilde{m}_{2j}^{-1}(f_1+\dots+f_d)\in \tilde{K}_{\tilde{\sigma}}$,
    \item for any $j\in\N$ and $j$-special sequence $f_1<\dots<f_d$ in $\tilde{K}_{\tilde{\sigma}}$ with $d\leq n_{2j-1}$ we have $\tilde{m}_{2j-1}^{-1}(f_1+\dots+f_d)\in \tilde{K}_{\tilde{\sigma}}$,
        \item $\tilde{K}_{\tilde{\sigma}}$ is symmetric, closed under restrictions to intervals and taking subconvex  combinations with rational coefficients. 
\end{enumerate}
In order to complete the definition of $\tilde{K}_{\tilde{\sigma}}$ we define special sequences as follows: $(f_1,\dots,f_d)\subset K_{\tilde{\sigma}}$ is $j$-special, $j\in \N$, if all $f_i$'s are weighted with $\w(f_1)=m_{2k}^{-1}$ for some $k\in\N$ with $m_{2k}>n_{2j-1}^2$, and $\w(f_{i+1})=m^{-1}_{\tilde{\sigma}((f_1,\dots,f_i))}$ for any $i=1,\dots, d-1$. 

Let $\|\cdot\|$ be the norm on $c_{00}$ with the norming set $\tilde{K}_{\tilde{\sigma}}$. The Gowers-Maurey  space
$T_{\tilde{\sigma}}[(\mathcal{A}_{\tilde{n}_j},\tilde{m}_j^{-1})_{j\in\N}]$ is the completion of $(c_{00}, \|\cdot\|)$. The set $\tilde{K}_{\tilde{\sigma}}$ is  called the canonical norming set of $T_{\tilde{\sigma}}[(\mathcal{A}_{\tilde{n}_j}, \tilde{m}_j^{-1})_{j\in\N}]$. 
\end{definition}
It follows that $T_{\tilde{\sigma}}[(\mathcal{A}_{\tilde{n}_j}, \tilde{m}_j^{-1})_{j\in\N}]$ is also a reflexive space with a basis formed by the unit vectors $(e_n)_n$. As in the case of $ \tilde{K}$, we use the notions of a weighted functional and a tree-analysis of a functional in $\tilde{K}_{\tilde{\sigma}}$.

We state now some of the standard notions and  results concerning Gowers-Maurey spaces in the form we shall need later.

\begin{definition}\label{def-exact}
    A pair $(f,x)$, with  $x\in T_{\tilde{\sigma}}[(\mathcal{A}_{\tilde{n}_j},\tilde{m}_j^{-1})_{j\in\N}]$, $f\in \tilde{K}_{\tilde{\sigma}}$, is called a $j$-exact pair, $j\in \N$, provided 
    \begin{enumerate}
        \item $f$ is weighted with $\w(f)=\tilde{m}_j^{-1}$,
        \item $f(x)=1$, $\|x\|\leq 3$, $\rng(x)=\rng(f)$,
        \item $|g(x)|\leq 5\max\{\tilde{m}_j^{-1},\w(g)\}$ for any weighted  $g\in\tilde{K}_{\tilde{\sigma}}$ with weight $\w(g)\neq \tilde{m}_j^{-1}$. 
    \end{enumerate}
\end{definition}
The next two lemmas follow by \cite[Prop. II.19 and Prop. II.25]{atod}. 

\begin{lemma}\label{ts-exact}
    Any block subspace of $T_{\tilde{\sigma}}[(\mathcal{A}_{\tilde{n}_j},\tilde{m}_j^{-1})_{j\in\N}]$ for any $j\in\N$ contains a vector $x$ so that $(x,f)$ is a $j$-exact pair for some $f\in\tilde{K}_{\tilde{\sigma}}$.
\end{lemma}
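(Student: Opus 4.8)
The plan is to follow the classical Gowers--Maurey construction of exact pairs, built from $\ell_1$-averages and rapidly increasing sequences (RIS); I only indicate the steps. First, since $T_{\tilde{\sigma}}[(\mathcal{A}_{\tilde{n}_j},\tilde{m}_j^{-1})_{j}]$ is reflexive with an unconditional basis it contains no isomorphic copy of $c_0$, so the usual averaging argument (iterating the $\mathcal{A}_{\tilde{n}_j}$-operations on normalised blocks and using $\tilde{m}_j^{1/\tilde{n}_j}\to 1$) shows that in every block subspace $Z$, for every $n\in\N$ and $\varepsilon>0$, there is a normalised $\ell_1^n$-average with constant $1+\varepsilon$ in $Z$ whose range, moreover, can be taken past any prescribed integer.

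For even $j=2i$ I would then fix a block sequence $x_1<\dots<x_{\tilde{n}_{2i}}$ in $Z$ with each $x_k$ a normalised $\ell_1^{\tilde{n}_{l_k}}$-average of constant $2$, the indices $l_k$ chosen to grow so fast that $\tilde{n}_{l_k}>\max\rng(x_{k-1})$ and $l_k$ exceeds every weight-index seen so far, so that $(x_k)_k$ is a RIS. Taking $f_k\in\tilde{K}_{\tilde{\sigma}}$ with $\rng(f_k)=\rng(x_k)$ essentially norming $x_k$, I put $f=\tilde{m}_{2i}^{-1}(f_1+\dots+f_{\tilde{n}_{2i}})\in\tilde{K}_{\tilde{\sigma}}$, so $\w(f)=\tilde{m}_{2i}^{-1}$, and $x=\lambda\,\tilde{m}_{2i}\tilde{n}_{2i}^{-1}\sum_k x_k$ with $\lambda$ close to $1$ chosen so that $f(x)=1$ exactly (off-diagonal terms $f_r(x_k)$, $r\neq k$, vanish since the ranges are disjoint, and replacing $f(x)=1+\varepsilon$ by $f(x)=1$ is absorbed by the slack in $\|x\|\le 3$). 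Conditions $\|x\|\le 3$ and (3) of Definition \ref{def-exact} then follow from the basic inequality of \cite{atod}: for a RIS of constant $C$, the value of any $g\in\tilde{K}_{\tilde{\sigma}}$ on $\sum_k a_k x_k$ is dominated by the norm of an associated functional applied to $\sum_k|a_k|e_k$ in an auxiliary mixed Tsirelson space (plus a small error), and the standard norm estimates there show that a weighted $g$ with $\w(g)=\tilde{m}_r^{-1}\neq\tilde{m}_{2i}^{-1}$ either has $r$ large — so it takes value $\lesssim\tilde{m}_r^{-1}$ on each of the $\ell_1$-averages, hence on $x$ — or $r$ small, so it acts essentially fully on at most one $x_k$ and contributes $\lesssim\tilde{m}_{2i}\tilde{n}_{2i}^{-1}\le\tilde{m}_{2i}^{-1}$; in either case $|g(x)|\le 5\max\{\tilde{m}_{2i}^{-1},\w(g)\}$.

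For odd $j=2i-1$, a functional of weight $\tilde{m}_{2i-1}^{-1}$ in $\tilde{K}_{\tilde{\sigma}}$ can only arise from an $i$-special sequence, so I would run the even case recursively: build even-weight exact pairs $(f_1,x_1),\dots,(f_d,x_d)$ with $d=\tilde{n}_{2i-1}$, $x_1<\dots<x_d$ in successive tails of $Z$, $\rng(f_k)=\rng(x_k)$, where $\w(f_1)=\tilde{m}_{2l}^{-1}$ for some $l$ with $\tilde{m}_{2l}>\tilde{n}_{2i-1}^2$ and $\w(f_{k+1})=\tilde{m}_{\tilde{\sigma}((f_1,\dots,f_k))}^{-1}$ — an even-index weight since $\tilde{\sigma}$ takes values in $2\N$, so the even case applies at every step. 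Then $(f_k)_{k=1}^d$ is $i$-special, $f=\tilde{m}_{2i-1}^{-1}(f_1+\dots+f_d)\in\tilde{K}_{\tilde{\sigma}}$ has $\w(f)=\tilde{m}_{2i-1}^{-1}$, the vectors $x_k$ — coming from exact pairs of rapidly growing weights — form a RIS, and $x=\lambda\,\tilde{m}_{2i-1}\,d^{-1}\sum_k x_k$ with $\lambda$ near $1$ gives the desired $(2i-1)$-exact pair exactly as before.

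I expect the genuine difficulty to be the basic inequality together with the verification of $\|x\|\le 3$ and condition (3): one must choose the RIS indices $l_k$, and in the odd case the depth of the nested construction, to grow quickly enough relative to all the ranges and to the values of $\tilde{\sigma}$ already used, and then carry out the tree-analysis estimate of \cite{atod}, tracking how a functional of each possible weight can spread across the $\ell_1$-averages making up $x$ — in the odd case additionally using injectivity of $\tilde{\sigma}$ to bound the effect of repeated odd weights inside a tree-analysis. The remaining points (existence of $\ell_1$-averages, the exact normalisation $f(x)=1$, the bookkeeping of ranges) are routine.
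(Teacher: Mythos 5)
Your outline is the standard Gowers--Maurey construction ($\ell_1$-averages $\to$ RIS $\to$ basic inequality for the even weights, then special sequences built from even-weight exact pairs for the odd weights), which is exactly the argument the paper relies on: it does not prove this lemma itself but cites \cite[Prop.~II.19 and Prop.~II.25]{atod}, where this construction is carried out. Apart from minor bookkeeping (keeping the $\ell_1$-average and RIS constants small enough to land within the bounds $\|x\|\le 3$ and the factor $5$ in condition (3) of Definition \ref{def-exact}), your proposal matches that proof.
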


\begin{lemma}\label{ts-ris}
For any infinite $L\subset\N$ any block subspace of $T_{\tilde{\sigma}}[(\mathcal{A}_{\tilde{n}_j},\tilde{m}_j^{-1})_{j\in L}]$  contains a normalized block sequence $(y_n)_n$ so that for any $j\in\N\setminus L$ and $A\subset\N$ with $j<A$ and $\# A\leq \tilde{n}_j$ we have $\|\sum_{n\in A}y_n\|\leq 5\tilde{n}_j\tilde{m}_j^{-2}$.
\end{lemma}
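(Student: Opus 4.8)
The plan is to realise $(y_n)_n$ as a rapidly increasing sequence of $\ell_1$-averages and then to read off the estimate from the \emph{basic inequality}, exploiting that $T_{\tilde\sigma}[(\mathcal{A}_{\tilde n_j},\tilde m_j^{-1})_{j\in L}]$ is built only from the operations indexed by $L$ and hence ``does not see'' the weight $\tilde m_j^{-1}$ when $j\notin L$. First I would invoke the standard fact (see \cite{atod}) that every block subspace of this space contains, for each $\ell\in L$, a normalised $\ell_1^{\tilde n_\ell}$-average with constant $2$, i.e.\ a normalised vector $z=\tilde n_\ell^{-1}(z_1+\dots+z_{\tilde n_\ell})$ with $z_1<\dots<z_{\tilde n_\ell}$ normalised blocks and $\|z_1+\dots+z_{\tilde n_\ell}\|\ge\tilde n_\ell/2$. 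Inside the prescribed block subspace I then build recursively a normalised block sequence $(y_n)_n$ in which each $y_n$ is such an $\ell_1^{\tilde n_{\ell_n}}$-average of constant $2$, choosing $(\ell_n)_n\subset L$ strictly increasing with $\ell_n\ge n$ and with $\tilde m_{\ell_n},\tilde n_{\ell_n}$ large relative to $\max\supp(y_1+\dots+y_{n-1})$; this makes $(y_n)_n$ a rapidly increasing sequence, and $\ell_n\ge n$ guarantees that whenever $j<A$ one has $\tilde n_{\ell_n}\ge\tilde n_{j+1}\ge(10\tilde n_j)^{\tilde m_{j+1}^2}$ for every $n\in A$, which will make the error terms below negligible against $\tilde n_j\tilde m_j^{-2}$.

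The core step is the basic inequality in the Gowers--Maurey form (cf.\ \cite{gm,atod}): for a rapidly increasing sequence $(y_n)_n$ of constant $2$, any $f\in\tilde K_{\tilde\sigma}$ and any finite $A\subset\N$, there is a functional $g$ in the canonical norming set of the auxiliary mixed Tsirelson space $T[(\mathcal{A}_{3\tilde n_i},\tilde m_i^{-1})_{i\in L}]$ such that $|f(\sum_{n\in A}y_n)|\le 2g(\sum_{n\in A}e_n)+\varepsilon_A$, where the error $\varepsilon_A$ is controlled by the growth of $(\tilde n_{\ell_n})_{n\in A}$. The standard proof inducts over a tree-analysis of $f$, peeling off the weights and applying the $\ell_1$-average estimate at each node, the special/coding functionals being handled via the injectivity of $\tilde\sigma$ together with the rapid growth so that only boundedly many $y_n$ interact badly with any branch. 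The decisive feature is that this auxiliary space again uses only the operations indexed by $L$.

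It then remains to estimate $\|\sum_{n\in A}e_n\|$ in this auxiliary space when $j\notin L$, $j<A$ and $\#A\le\tilde n_j$, which is where the growth conditions $(*)$ enter. In a tree-analysis of such a $g$, every weight $\tilde m_i^{-1}$ that occurs has $i\in L$, hence $i\ne j$: a top node with $i>j$ already gives $\tilde m_i^{-1}\#A\le\tilde m_{j+1}^{-1}\tilde n_j\le\tilde m_j^{-2}\tilde n_j$, while if only weights of index $<j$ occur then, since $\tilde n_j\ge(10\tilde n_{j-1})^{\tilde m_j^2}$, any such tree reaching the $\#A\le\tilde n_j$ coordinates must have depth at least $\tilde m_j^2$, so each coordinate carries coefficient at most $2^{-\tilde m_j^2}\le\tilde m_j^{-2}$; combining the two cases gives $\|\sum_{n\in A}e_n\|\le 2\tilde n_j\tilde m_j^{-2}$. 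For $j=1$ the lemma is anyway immediate, as $\|\sum_{n\in A}y_n\|\le\#A\le\tilde n_1\le 5\tilde n_1\tilde m_1^{-2}$; for $j\ge2$ the number $\tilde n_j\tilde m_j^{-2}$ is, by $(*)$, enormous compared with the error $\varepsilon_A$ of a rapidly increasing sequence whose $\tilde n_{\ell_n}\ge\tilde n_{j+1}$, and feeding the auxiliary estimate into the basic inequality yields $\|\sum_{n\in A}y_n\|=\sup_{f\in\tilde K_{\tilde\sigma}}|f(\sum_{n\in A}y_n)|\le 4\tilde n_j\tilde m_j^{-2}+\tilde n_j\tilde m_j^{-2}=5\tilde n_j\tilde m_j^{-2}$.

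I expect the main obstacle to be precisely the basic inequality in the coding setting: making ``rapidly increasing'' quantitative enough that the special functionals contribute only a negligible $\varepsilon_A$, uniformly in the a priori unknown index $j\notin L$, and verifying that the coding function $\tilde\sigma$ cannot produce inside $\tilde K_{\tilde\sigma}$ a weighted functional with weight outside $L$. Once the basic inequality is available, the auxiliary estimate is a routine consequence of $(*)$.
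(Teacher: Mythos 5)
Your route --- rapidly increasing sequences of $\ell_1$-averages, the basic inequality reducing the estimate to the auxiliary mixed Tsirelson space over $L$, and the lacunarity conditions $(*)$ to handle the missing weight $\tilde m_j^{-1}$ --- is precisely the standard argument behind Propositions II.19 and II.25 of \cite{atod}, which is all the paper itself invokes for this lemma. The one flaw is in your auxiliary-space estimate for trees built solely from weights of index less than $j$: it is not true that every coordinate of $A$ then carries coefficient at most $2^{-\tilde m_j^2}$ (shallow coordinates carry large coefficients); the standard fix is to split the coordinates by their depth in the tree-analysis, bounding the at most $(2\tilde n_{j-1})^{d}$ coordinates of depth at most $d$ by $1$ each and the remaining ones by $2^{-d}$ each, which with $d\approx 2\log_2 \tilde m_j$ and $(*)$ still yields the claimed bound of order $\tilde n_j\tilde m_j^{-2}$.
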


Finally we recall the crucial property of Gowers-Maurey spaces. 

\begin{theorem}\label{xs}\cite[Thm IV.12]{atod}
Let $W=T_{\tilde{\sigma}}[(\mathcal{A}_{\tilde{n}_j},\tilde{m}_j^{-1})_{j\in \N}$ be a Gowers-Maurey space. Then any operator $T\in\mathscr{L}(W)$ is of the form $T=aId_{W}+S$ for some $a\in\R$ and $S\in\mathscr{SS}(W)$.   
\end{theorem}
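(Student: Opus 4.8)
This is the Gowers--Maurey operator dichotomy for $W$, and the plan is to follow the scheme of \cite{gm} as organised in \cite[Ch.~IV]{atod}: first prove that $W$ is hereditarily indecomposable (HI), then upgrade HI together with the finite‑dimensional structure of $W$ to the operator statement. The combinatorial inputs are already recorded --- every block subspace contains $j$‑exact pairs (Lemma~\ref{ts-exact}) and rapidly increasing sequences (RIS) controlled by Lemma~\ref{ts-ris} --- and on top of them one proves the \emph{basic inequality}: for a RIS $(x_i)_i$ of exact pairs and scalars $(a_i)_i$, the norm $\|\sum_i a_i x_i\|$ in $W$ is, up to a universal constant, at most the norm of $\sum_i a_i e_i$ computed in the auxiliary mixed Tsirelson space $T[(\mathcal{A}_{\tilde n_j},\tilde m_j^{-1})_{j\in\N}]$, the special functionals of odd weight entering only through a single‑term correction, which is exactly what injectivity of $\tilde\sigma$ guarantees.

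From the basic inequality I would extract two estimates. First, an $\ell_1$‑average of exact pairs of length $\tilde n_{2j}$ renormalises to a $(2j)$‑exact pair in any block subspace. Second, the \emph{dependent‑sequence estimate}: if $(x_i,f_i)_{i=1}^{d}$ with $d=\tilde n_{2j-1}$ is a $j$‑special sequence of exact pairs then $\|\tilde m_{2j-1}\tilde n_{2j-1}^{-1}\sum_i x_i\|\ge c$ (because $\tilde m_{2j-1}^{-1}\sum_i f_i$ is a legitimate special functional), whereas for signs $(\varepsilon_i)$ with bounded partial sums $\|\tilde m_{2j-1}\tilde n_{2j-1}^{-1}\sum_i\varepsilon_i x_i\|\le C\tilde m_{2j-1}^{-1}$. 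The latter is the crux: a weighted $g$ with $\w(g)\neq\tilde m_{2j-1}^{-1}$ is handled by clause~(3) of Definition~\ref{def-exact} (at most one $x_i$ carries the matching weight), while for $\w(g)=\tilde m_{2j-1}^{-1}$ the injectivity of $\tilde\sigma$ together with the requirement $\tilde m_{2k}>\tilde n_{2j-1}^2$ in the definition of a special sequence forces $g$ to agree, outside a bounded initial block, with $\tilde m_{2j-1}^{-1}\sum_i f_i$, so its value is of order $\tilde m_{2j-1}^{-1}|\sum_i\varepsilon_i|$, which is small by balance of signs. This is exactly where the growth conditions $(*)$ are indispensable, and I expect it to be the main obstacle. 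Granting it, HI follows by interleaving: given block subspaces $Y,Z$ and $\varepsilon>0$, build a $j$‑special sequence of exact pairs $(x_i,f_i)_{i=1}^{d}$ ($d=\tilde n_{2j-1}$, $j$ large) with $x_i\in Y$ for $i$ odd and $x_i\in Z$ for $i$ even, each successive $\tilde\sigma$‑prescribed weight being realised inside the relevant subspace by Lemma~\ref{ts-exact}; with $y=\tilde m_{2j-1}\tilde n_{2j-1}^{-1}\sum_{i\,\mathrm{odd}}x_i$ and $z=\tilde m_{2j-1}\tilde n_{2j-1}^{-1}\sum_{i\,\mathrm{even}}x_i$ one gets $\|y+z\|\ge c$ and $\|y-z\|\le C\tilde m_{2j-1}^{-1}$, hence $\|y-z\|\le\varepsilon\|y+z\|$.

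Finally, suppose $T\in\mathscr{L}(W)$ and $T-aId_W\notin\mathscr{SS}(W)$ for every $a\in\R$. A standard reduction (see \cite{gm}, \cite[Ch.~IV]{atod}), using reflexivity of $W$ so that a normalised block sequence and its $T$‑image both tend to $0$ weakly and the image may be perturbed into block position, produces a block subspace $Y$ and $\delta>0$ with $\dist(Tx,\R x)\ge\delta\|x\|$ for all $x\in Y$. Inside $Y$ I would build a $j$‑special sequence $(x_i,f_i)_{i=1}^{d}$ ($d=\tilde n_{2j-1}$) of exact pairs and, in parallel, a $j$‑special sequence $(g_i)_{i=1}^{d}$ of weighted functionals with $\rng(g_i)\approx\rng(Tx_i)$ and $\varepsilon_i g_i(Tx_i)\ge\delta'$ for alternating signs $\varepsilon_i$; the point at which $\dist(Tx,\R x)\ge\delta$ really enters is the existence, for each prescribed weight, of an exact pair in $Y$ whose image carries a weighted functional of that weight with value bounded below (for $T$ a scalar this fails, as it must). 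Setting $z=\tilde m_{2j-1}\tilde n_{2j-1}^{-1}\sum_i\varepsilon_i x_i$, the dependent‑sequence estimate gives $\|z\|\le C\tilde m_{2j-1}^{-1}$, while $\tilde m_{2j-1}^{-1}\sum_i g_i\in\tilde K_{\tilde\sigma}$ evaluates $Tz$ at $\ge c\delta'$ (sign‑aligned diagonal terms add, off‑diagonal ones vanish by the block structure of the images); since $\|Tz\|\le\|T\|\,\|z\|\le C\|T\|\tilde m_{2j-1}^{-1}\to 0$, this contradicts the lower bound for $j$ large, so $T=aId_W+S$ with $a\in\R$, $S\in\mathscr{SS}(W)$. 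The two substantial ingredients are the dependent‑sequence estimate and this parallel image‑detecting construction; both are standard in Gowers--Maurey theory, and I would cite \cite{gm,atod} for the routine bookkeeping, reproducing in detail only the dependent‑sequence estimate in the normalisation needed here.
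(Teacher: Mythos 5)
The paper does not prove Theorem~\ref{xs} at all --- it is quoted directly from \cite[Thm~IV.12]{atod} --- and your outline is precisely the standard Gowers--Maurey scheme carried out there (exact pairs and RIS in every block subspace, the basic inequality against an auxiliary mixed Tsirelson space, the dependent-sequence estimate yielding HI, and then the reduction of $T-aId_W\notin\mathscr{SS}(W)$ to a $\dist(Tx,\R x)\ge\delta\|x\|$ contradiction via a dependent sequence whose functionals detect $Tx_i$ rather than $x_i$). This is also exactly the scheme the paper itself adapts in its Sections~4--5 for the horizontal structure of $X$, so your proposal is correct and follows essentially the same route as the source the paper relies on.
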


\section{The definition and basic properties of the space $X$}

In this section we define of the space $X$, and present its basic properties, including  Lemma \ref{key}, which provides one of the tools substituting for the lack of "horizontal" Schauder decomposition of the space.

For the rest of the paper we fix sequences $(n_j)_j$, $(m_j)_j$ satisfying $(*)$ and $(**)$ of Notation \ref{properties-integers}.

\begin{notation}
We fix a partition $\N=\bigcup_{s=0}^\infty L_s$ into pairwise disjoint infinite sets with infinite $L_s\cap 2\N$ and $L_s\cap (2\N-1)$ for any $s=0,1,\dots$. 

Let 
$Y=T[(A_{n_{2j}},m_{2j}^{-1})_{j\in L_0}]$ with the canonical norming set $K_Y$.

We fix also another  partition $\N=\bigcup_{s=1}^\infty J_s$ of integers into pairwise disjoint infinite sets. 
Given any finite $A\subset \N$ we write $J_A=\bigcup_{s\in A}J_s$. \end{notation}

For any $x\in c_{00}$ let $\supp_h (x)=\{s\in\N: \supp (x)\cap J_s\neq\emptyset\}$ and let $\rng_h(x)$ be the minimal interval in $\N$ containing $\supp_h(x)$. A vector $x\in c_{00}$ is called simple, if  $\supp_h(x)$ is a singleton. A sequence $(x_n)_n\subset c_{00}$ is called horizontally block (h-block), provided it is block both with respect to the unit vector basis of $c_{00}$ and with respect to $(J_s)_{s\in\N}$, i.e. $(\supp (x_n))_n$ and $(\supp_h(x_n))_n$ are block in $\N$. A subspace of $c_{00}$ is called h-block, if it is generated by an infinite h-block sequence.

Now we set a family of mappings. For any $s=1,2,\dots$ set an injective mapping
$$
\sigma_s: \{(f_1,\dots,f_d)\subset c_{00}^\mathbb{Q}(J_s): (f_1,\dots,f_d) \text{ block}\}\to L_s\cap 2\N, \ \ \ s=1,2\dots
$$
such that $\sigma_s((f_1,\dots,f_d))>\max\supp (f_d)\|f_1+\dots+f_d\|_\infty^{-1}$ for any non-zero $(f_1,\dots,f_d)$.

Set a mapping
$$
\sigma:\{(f_1,\dots,f_d)\subset c_{00}^\mathbb{Q}: (f_1,\dots,f_d) \text{ h-block}\} \to 2^{L_0\cap 2\N}
$$
such that $\sigma((f_1,\dots,f_d))$ is infinite for any $(f_1,\dots,f_d)$ and $\sigma((f_1,\dots,f_d))\cap \sigma((g_1,\dots,g_l))=\emptyset$ for any $(f_1,\dots,f_d)\neq (g_1,\dots,g_l)$.  

Finally set a mapping 
$$
\tau:\{(f_1,\dots,f_d)\subset c_{00}^\mathbb{Q}: (f_1,\dots,f_d) \text{ h-block}, f_1,\dots,f_d \text{ simple}\} \to 2^\N
$$
such that for any $(f_1,\dots,f_d)$   and any $s\in\N$ we have $\tau(f_1,\dots,f_d)\cap (L_s\cap 2\N)\neq\emptyset$,  and $\tau(f_1,\dots,f_d)\cap \tau (g_1,\dots, g_l)=\emptyset$ for any  $(f_1,\dots,f_d)\neq (g_1,\dots,g_k)$.

\begin{definition}

We define the set $K\subset c_{00}$ as the minimal set such that

\begin{enumerate}
\item $\pm e_i^*\in K$ for any $i\in\mathbb{N}$,
\item for any $s\in\N$, $2j\in L_s$ and any block sequence $f_1<\dots<f_d$, of elements of $K\cap c_{00}(J_s)$, $d\leq n_{2j}$, 
we have $m_{2j}^{-1}(f_1+\dots+f_d)\in K$, 
\item for any $s\in\N$, $2j-1\in L_s$ and any $j$-special  (defined with respect to $\sigma_s$) 
sequence $f_1<\dots<f_d$ of elements of $K\cap c_{00}(J_s)$, $d\leq n_{2j-1}$, we have $m_{2j-1}^{-1}(f_1+\dots+f_d)\in K$, 
\item for any $2j\in L_0$ and any h-block sequence $f_1<\dots<f_d$ of elements of $K$, $d\leq n_{2j}$, we have 
$m_{2j}^{-1}(f_1+\dots+f_d)\in K$, 
\item for any $2j-1\in L_0$ and any $j$-h-special (defined with respect to $\sigma$) 
sequence $f_1<\dots<f_d$ of elements of $K$, $d\leq n_{2j-1}$, we have $m_{2j-1}^{-1}(f_1+\dots+f_d)\in K$, 
\item for any $N\in\N$ and any $N$-g-special sequence $f_1<\dots<f_{2N}$ of elements of $K$, and any $(\varepsilon_i)_{i=1}^N\subset\{-1,0,1\}$ we have $\sum_{i=1}^N\varepsilon_i(f_{2i-1}+f_{2i})\in K$,
\item $K$ is symmetric, closed under restrictions to intervals and taking subconvex combination with rational coefficients.
\end{enumerate}

In order to complete the definition we need to define special sequences. We say that a block sequence $(f_1,\dots,f_d)\subset K$, $1<d$, is 
\begin{itemize}
    \item $j$-special, $2j-1\in L_s$, $s\in\N$, provided $d\leq n_{2j-1}$, $\w(f_{i+1})=m_{\sigma_s(f_1,\dots, f_i)}^{-1}$ for each $i=1,\dots,d-1$ and $\w(f_1)< n_{2j-1}^{-2}$,
    \item $j$-h-special, $2j-1\in L_0$, provided $d\leq n_{2j-1}$, $(f_i)_{i=1}^d$ is h-block, $\w_h(f_{i+1})=m_{k}^{-1}$ for some  $k\in\sigma(f_1,\dots, f_i)$ for each $i=1,\dots,d-1$ and $\w_h(f_1)< n_{2j-1}^{-2}$ (cf. Notation \ref{notation3}),
    \item $N$-g-special, $N\in \N$, provided $d=2N$, all $f_i$'s are simple with $\supp_h(f_{2i-1})=\{i\}$, $\supp_h(f_{2i})=\{N+i\}$ for each $i\leq N$, $\w(f_{i+1})=m_k^{-1}$ for some $k\in\tau(f_1,\dots, f_i)$ for each $i=1,\dots,N-1$, and $\w(f_1)< N^{-1}$.

\end{itemize}

Let $\|\cdot\|$ be the norm on $c_{00}$ for which  $K$ is the norming set, i.e. $\|\cdot\|=\max\{|f(\cdot)|: f\in K\}$. Let $X$ be the completion of $(c_{00}, \|\cdot\|)$. 

\end{definition}

\begin{notation}\label{notation3}
A functional $f\in K$ (and its restrictions to an interval) obtained by (2) or (3) is called weighted with weight $\w(f)=m_j^{-1}$ for suitable $j$ (according to the standard terminology of mixed Tsirelson spaces). 

A functional $f$ obtained by (4) or (5) (or its restriction to an interval) is called h-regular and h-special resp., and in both cases h-weighted with weight $\w_h(f)=m_j^{-1}$ for suitable $j$. 

Finally a functional  obtained by (6) (or its restriction to an interval) is called g-special (if we underline the maximal length of the sequence used to generate the functional, we call it $N$-g-special for suitable $N\in\N$). 
\end{notation}

Notice that by the definition of $K$ the unit vectors $(e_i)_i$ form a bimonotone a basis of $X$.
We gather now basic facts concerning specific subspaces of $X$ encoded in the definition of the set $K$. 
\begin{notation}
For any fixed $s\in \N$ let     $X_s=\{x\in X: \supp_h(x)\subset J_s\}$ and let $K_s$ be the the  smallest subset of $K$ satisfying (1), (2), (3) and (7) of the definition of $K$ for  the fixed $s$. 
\end{notation}

\begin{remark}\label{xs-remark}
Fix $s\in\N$. As  $f|_{J_s}\in K_s$ for any $f\in K$, the space $X_s\subset X$ is a Gowers-Maurey space $T_{\sigma_s}[(\mathcal{A}_{n_j}, m_j^{-1})_{j\in L_s}]$ with the canonical norming set $K_s$. 

Moreover, by the definition of $K$, the projection $I_s:  X\to  X_s$, $s\in\N$, is bounded with norm 1. 

\end{remark}

\begin{notation}
In addition to the projections $(I_s)_{s=1}^\infty\subset\mathscr{L}(X)$ defined above, by $ I_0\in\mathscr{L}(X)$ we denote the identity operator $X\to X$. 

For any finite $A\subset\N$ we let also $I_A=\sum_{s\in A}I_s\in\mathscr{L}(X)$. 
\end{notation}

\begin{notation} By $\|\cdot\|_{h,\infty}$ we denote the supremum norm on $X$ with respect to $(X_s)_s$, namely for any $x\in X$ we let $\|x\|_{h,\infty}=\sup\{\|I_sx\|: s=1,2,\dots\}$. This norm is well-defined and satisfies $\|\cdot\|_{h,\infty}\leq \|\cdot\|$ by Remark \ref{xs-remark}. 
\end{notation}

As it was mentioned in the introduction, the definition of the norm of $X$ follows the scheme of mixed Tsirelson norm modelled on a certain "ground" set (cf. \cite{atod}), in our case formed by g-special functionals, in the "horizontal" setting. The latter provides a countable family of bounded non-trivial projections $(I_s)_{s=1}^\infty$ (on the spaces $(X_s)_{s=1}^\infty$), whereas including the "ground" set in the norming set ensures a rich and closely related finite representation of $Y$ and $\ell_1$ in $X$, in consequence - control over the norm of linear combinations of the aforementioned projections. Notice that the norming set without the ground set of g-special functionals would  define simply a Gowers-Maurey sum of Gowers-Maurey spaces $(X_s)_s$. The only "antihorizontal" construction appears in g-special functionals and it imposes the fact that $(X_s)_s$ do not form a Schauder decomposition of $X$.

We introduce now the basic tool for future estimates, i.e. a variant of a tree-analysis of a functional in $K$, here built up to simple functionals.

\begin{definition} 
An h-tree-analysis of
$f\in K$ is a family $(f_t)_{t\in \mt}$ indexed by a finite tree $\mt$ with a
unique root $0\in \mt$ such that the following hold

\begin{enumerate}
    \item $f_0=f$ and $f_t\in K$ for all $t\in \mt$,
    \item  $t\in \mt$ is  terminal iff  $f_t$ is simple,
    \item if $t\in \mt$ is non-terminal, then one of the following holds true:
    \begin{enumerate}
        \item $f_t$ is an h-weighted functional of the form $f_t= \pm\w_h(f_t)E\sum_{r\in \suc(t)}f_r$ for some interval $E\subset \N$ and a sequence  $(f_r)_{r\in \suc(t)}\subset K$ which is h-block with respect to an appropriate linear ordering of $\suc(t)$,
        \item $f_t$ is a g-special functional of the form $f_t=E\sum_{r\in \suc(t)}\varepsilon_rf_r$, for some interval $E\subset \N$, signs $(\varepsilon_r)_r\subset\{-1,0,1\}$ and a sequence  $(f_r)_{r\in \suc(t)}\subset K$ of simple functionals, which is h-block with respect to an appropriate linear ordering of $\suc(t)$, 
         \item $f_t$ is a subconvex rational combination of its successors, i.e. of the form $f_t=\sum_{r\in \suc(t)}c_rf_r$, for some  scalars $(c_r)_r\subset [0,1]\cap\mathbb{Q}$ with $\sum_rc_r\leq 1$, and a sequence $(f_r)_{r\in \suc(t)}\subset K$. 
    \end{enumerate}
\end{enumerate}
\end{definition}

Notice that every functional from a norming set $K$ admits an h-tree-analysis, not
necessarily unique.

The next two lemmas are crucial in the reasoning concerning horizontal structure of $X$. The second one forms a tool compensating for the fact that $(X_s)_{s=1}^\infty$ do not form a Schauder decomposition of $X$. 

\begin{lemma}\label{key-aux}
Fix $N\in\N$ and $f\in K$. Then there are $f', g\in K$ so that $f|_{J_{[1,N]}}=(f'+g)|_{J_{[1,N]}}$, $\supp_h (f')\subset [1,N]$ and $g$ is a subconvex combination of g-special functionals.

\end{lemma}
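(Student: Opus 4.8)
The plan is to analyse $f$ via an h-tree-analysis $(f_t)_{t\in\mt}$ and separate, at each node, the ``horizontally low'' part (supported on $J_{[1,N]}$ in the $\supp_h$ sense) from the part that is governed by a g-special functional. The key structural observation is this: a g-special functional $h=E\sum_{i=1}^N\varepsilon_i(h_{2i-1}+h_{2i})$ has exactly one simple summand with $\supp_h=\{i\}$ for each $i\le N$ (namely $h_{2i-1}$) together with summands supported on $\{N+1,\dots,2N\}$; hence the restriction $h|_{J_{[1,N]}}$ keeps, from each g-special building block, exactly a segment of simple functionals living in the first $N$ coordinate blocks, while discarding the rest. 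So the restriction-to-$J_{[1,N]}$ operation, applied through the tree, naturally splits $f|_{J_{[1,N]}}$ into (i) a piece still built by the non-g-special operations (averages, h-averages, h-special, subconvex combinations) that stays inside $J_{[1,N]}$, giving $f'$ with $\supp_h(f')\subset[1,N]$, and (ii) the leftover contributions coming out of g-special nodes, which — because at such a node the coefficients are signs in $\{-1,0,1\}$ and the remaining data is again a g-special sequence cut down — are themselves (up to sign and restriction to an interval) g-special functionals. Collecting the latter via the subconvex-combination closure rule (7) yields $g$ a subconvex combination of g-special functionals.

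More concretely, I would argue by induction on the height of the h-tree-analysis of $f$. If $f$ is simple, then either $\supp_h(f)\subset[1,N]$, in which case we take $f'=f$ and $g=0$, or $\supp_h(f)\cap[1,N]=\emptyset$, in which case $f|_{J_{[1,N]}}=0$ and both may be taken $0$. For the inductive step, consider the root operation. If $f=\pm\w_h(f)E\sum_{r}f_r$ is an h-weighted node, apply the inductive hypothesis to each $f_r$ (note $\rng_h$ considerations: only those $f_r$ with $\rng_h(f_r)\cap[1,N]\ne\emptyset$ contribute after restriction), obtaining $f_r'$ and $g_r$; then set $f'=\pm\w_h(f)E\sum_r f_r'$, which is again in $K$ with $\supp_h(f')\subset[1,N]$, and let $g$ be the appropriate subconvex combination built from the $g_r$'s — here we must absorb the weight $\w_h(f)\le 1$ and rescale so the coefficients still sum to at most $1$, which is exactly what the subconvex-combination closure permits. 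The subconvex-combination node is handled identically and is in fact easier. The genuinely delicate case is the g-special node $f=E\sum_{i=1}^N\varepsilon_i(f_{2i-1}+f_{2i})$: here restricting to $J_{[1,N]}$ kills each $f_{2i}$ (whose $\supp_h=\{N+i\}$) and keeps $f_{2i-1}$ only if $i\le N$, i.e. always; so $f|_{J_{[1,N]}}=E\sum_{i=1}^N\varepsilon_i f_{2i-1}$. This is not literally g-special (the $f_{2i}$'s are gone), so the point is to recognise it, up to the interval restriction $E$ and the signs, as a restriction of a g-special functional — one completes $(f_{2i-1})_i$ back to a g-special sequence using the $\tau$-map data that originally produced $f$ — and hence, after restriction, as an element expressible through rule (6) and (7). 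In this case we take $f'=0$ and $g=f|_{J_{[1,N]}}$, a single g-special functional (a trivial subconvex combination).

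The main obstacle I anticipate is bookkeeping the coefficients and the interval restrictions so that, when g-special pieces appear at several different nodes of the tree and get propagated upward through averaging and subconvex nodes, the final $g$ is genuinely a \emph{subconvex} combination of honest g-special functionals in $K$ — i.e. the accumulated multiplicative weights $\prod \w_h$ along branches together with the subconvex coefficients $c_r$ combine to a single family of nonnegative rationals summing to at most $1$. This is where closure rule (7) of the definition of $K$ (closure under subconvex rational combinations) does the heavy lifting, but one has to check that the pieces being combined are literally in $K$ (g-special, possibly restricted to an interval, which Notation \ref{notation3} allows) rather than merely formal sums, and that rationality is preserved (which it is, since all functionals in the h-tree-analysis may be taken with rational coefficients, matching the definition of $\sigma,\sigma_s,\tau$ on $c_{00}^{\mathbb Q}$). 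A secondary, more routine point is verifying that $f'$, assembled from the restricted successors, still satisfies the relevant length and weight constraints so that it remains in $K$; this follows because restricting supports only shortens sequences and does not alter weights, so rules (2)–(5) remain satisfied for $f'$.
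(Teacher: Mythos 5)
Your overall strategy --- induct along an h-tree-analysis, splitting each node into a part with $\supp_h\subset[1,N]$ and a g-special remainder --- is the same as the paper's, but two steps as you describe them would fail.

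First, at a g-special node you set $g=f|_{J_{[1,N]}}$ and try to recognise it as a g-special functional in $K$ (by ``completing back'' via $\tau$ and restricting). This cannot work: $J_{[1,N]}=\bigcup_{s\le N}J_s$ is a union of infinite sets and is not an interval of $\N$, so $K$ is not closed under this restriction; moreover, in rule (6) the same sign $\varepsilon_i$ multiplies the pair $f_{2i-1}+f_{2i}$, and the summands are block ($f_1<f_2<\cdots$) so their coordinate supports interleave --- hence neither a choice of signs nor a restriction to an interval can delete the $f_{2i}$'s while keeping consecutive $f_{2i-1}$'s. The idea you are missing is that the lemma only requires $f|_{J_{[1,N]}}=(f'+g)|_{J_{[1,N]}}$, so $g$ may carry support outside $J_{[1,N]}$: the paper simply takes $g_t=f_t$, the whole unrestricted g-special functional, which is trivially in $K$ and trivially agrees with $f_t$ on $J_{[1,N]}$ (when $f'_t=0$).

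Second, at an h-weighted node you propose $g=\w_h(f)\sum_r g_r$ over all successors meeting $[1,N]$ and then to ``rescale so the coefficients still sum to at most $1$''. Rescaling changes the functional and destroys the identity $f|_{J_{[1,N]}}=(f'+g)|_{J_{[1,N]}}$; without rescaling the accumulated coefficient mass can be as large as $n_{2j}m_{2j}^{-1}\gg 1$, so $g$ is not a subconvex combination. The missing observation (the heart of the paper's bookkeeping via the sets $A$, $B$ and the index $r_0$) is that the successors are h-block, so at most one of them, say $f_{r_0+1}$, has $\supp_h$ straddling $N$; every earlier successor has $\supp_h\subset[1,N]$ and is absorbed entirely into $f'$, and every later one vanishes on $J_{[1,N]}$. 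Hence $g_t=\pm\w_h(f_t)Eg_{r_0+1}$ is a single term scaled by $\w_h(f_t)\le 1$ and subconvexity propagates; only genuine subconvex-combination nodes mix several $g_r$'s, and there the coefficients $c_r$ already sum to at most $1$. With these two corrections your argument becomes essentially the paper's proof.
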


\begin{proof}
Take an h-tree-analysis $(f_t)_{t\in\mathcal{T}}$ of $f$, assume that $\supp_h(f)\cap [1,N]\neq\emptyset$.  Let $A\subset \mt$ be the family of (terminal)
$r\in\mt$ such that $r\in\suc(t)$ for some $t\in\mt$ with $f_t$ g-special and $\min \supp_h(f_t)\leq N<\max\supp_h(f_t)$. 
Let $B$ be the family of all terminal $t\in\mt\setminus A$ with $\min\supp_h(f_t)\leq N$. Let $\mt_A\subset\mt$ (resp. $\mt_B$) be the family of all nodes in $\mt$ that are comparable with some node of $A$ (resp. $B$). Note that the root of $\mt$ belongs to $\mt_A\cup\mt_B$.

We define $f'$ and $g$ simultaneously by defining their h-tree-analyses  $(g_t)_{t\in\mt_A}$ and $(f'_t)_{t\in\mt_B}$  
inductively starting from terminal nodes, so that 
\begin{enumerate}
    \item $f_t|_{J_{[1,N]}}=(f'_t+g_t)|_{J_{[1,N]}}$ for any $t\in\mt_B\cup \mt_A$ (we adopt the convention that $g_t=0$ if $t\not\in \mt_A$ and $f'_t=0$ if $t\not\in\mt_B$),
    \item for any $t\in\mt_A$, either $t\in\suc (s)$ for some $s\in\mt$ with $g_s$ g-special, or $g_t$ is a subconvex combination of g-special functionals,
    \item  $\supp_h(f'_t)\subset [1,N]$ for any $t\in\mt_B$, 
    \item $f_t'=f_t$ for any $t\in\mt_B$ with $\supp_h(f_t)\leq N$,
    \item $f'_t$ is h-weighted with $\w_h(f'_t)=\w_h(f_t)$ for any $t\in\mt_A\cup\mt_B$ with $f_t$ h-weighted. 
\end{enumerate}

If $t\in\mt_A\cup\mt_B$ is terminal  let $g_t=f_t$ for $t\in\mt_A$ and $f'_t=f_t$ for $t\in\mt_B$.

Pick non-terminal $t\in\mt_A\cup \mt_B$ and assume all $f'_r$ and $g_r$, $r\in\suc(t)$, are  defined and satisfy the above conditions. 

If $f_t$ is g-special, then $t$ belongs to only one of the trees $\mt_A, \mt_B$. Let $f_t'=f_t$ if $t\in\mt_B$ and
$g_t=f_t$ if $t\in \mt_A$. All desired conditions are satisfied. 

If $f_t$ is a subconvex combination of its successors, let $\suc_B(t)=\suc(t)\cap\mt_B$ and $\suc_A(t)=\suc (t)\cap \mt_A$. 
If $f_t=\sum_{r\in \suc(t)}c_rf_r$ then let $f'_t=\sum_{r\in \suc_B(t)}c_rf'_r$ and $g_t=\sum_{r\in\suc_A(t)}c_rg_r$. Notice  that $f_t|_{J_{[1,N]}}=\sum_{r\in \suc_A(t)\cup \suc_B(t)}c_rf_r|_{J_{[1,N]}}$, thus (1) is satisfied by the inductive assumption, (3) is satisfied by the inductive assumption. For (4) notice that if $\supp_h(f_t)\leq N$, then also $\supp_h(f_r)\leq \N$ for any $r\in\suc(t)$ and apply the inductive assumption. Notice also that in the considered case, by the inductive assumption (2), for any $r\in\suc_A(t)$ the functional $g_r$ is a subconvex combination of g-special functionals. Therefore we 
obtain (2) for $t$. It is clear also that $f'_t,g_t\in K$. 

If $f_t= \pm\w_h(f_t)E\sum_{r=1}^df_r$, with $f_1<\dots<f_d$, let 
$r_0=\max\{r=1,\dots,d: \supp_h(f_r)\leq \N\}$. Note that by inductive assumption $f'_r=f_r$ for any $1\leq r\leq r_0$. 

If $r_0=d$ or $r_0+1\not\in \mt_A\cup \mt_B$ then $\supp_h(f_t)\cap [1,N]\subset \bigcup_{r=1}^{r_0}\supp_h(f_r)$ and we let $g_t=0$ and 
$f'_t= \pm\w_h(f_t)E\sum_{r=1}^{r_0}f'_r$.  Conditions (1) are (5) are satisfied  by the definition, (3) and (4) are satisfied by the inductive assumption. It follows easily that $f'_t\in K$. 

Assume $r_0+1\in\mt_A\cup\mt_B$. Then  $\supp_h(f_t)\cap [1,N]\subset \bigcup_{r=1}^{r_0+1}\supp_h(f_r)$ and we let $g_t=\pm\w_h(f_t)Eg_{r_0+1}\in K$ and 
$f'_t=\pm \w_h(t)E\sum_{r=1}^{r_0+1}f'_r$. 
Conditions (1), (2) and (5) are satisfied  by the definition, (3) and (4) - by the inductive assumption. Notice also that $f'_t\in K$ both in the case of $f_t$ h-regular and h-special by the inductive assumption (4) and (5) for $r\in\{1,\dots,r_0+1\}$. This case ends the inductive construction and thus the proof of the lemma. 
\end{proof}

\begin{lemma}\label{key}
\begin{enumerate}
    \item For any normalized h-block sequence $(x_n)_n$ there are $\delta>0$, an infinite $J\subset\N$ and an h-block sequence  $(f_n)_{n\in J}\subset K$ such that $f_n(x_n)\geq \delta$ for any $n\in J$. 
\item For any normalized h-block sequences $(x_n)_n$ and $(y_n)_n$ with $\inf_n\dist(x_n,\R y_n)>0$ and $\|y_n\|_{h,\infty}\xrightarrow{n\to\infty} 0$ there are $\delta>0$, infinite $J\subset\N$ and an h-block sequence $(f_n)_{n\in J}\subset K$ such that $f_n(x_n)\geq \delta$ for any $n\in J$ and $f_n(y_n)\xrightarrow{J\ni n\to\infty}0$. 
\end{enumerate}
\end{lemma}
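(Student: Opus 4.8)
The plan is to treat both parts by extracting, for each $n$, a single functional that ``witnesses'' a fixed proportion of the norm of $x_n$, while keeping the sequence of witnesses horizontally block, and in part (2) additionally controlling its action on $y_n$. First I would observe that since $(x_n)_n$ is normalized, for each $n$ there is $h_n \in K$ with $h_n(x_n) > 1/2$ and, after passing to $\rng(x_n)$, we may assume $\rng(h_n)\subset\rng(x_n)$, so in particular $(h_n)_n$ is block with respect to the unit vector basis. The obstacle is that $h_n$ need not be h-block: its horizontal range $\rng_h(h_n)$ may spread well beyond $\rng_h(x_n)$, and worse, an h-tree-analysis of $h_n$ may involve g-special nodes straddling the horizontal interval where $x_n$ lives. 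This is exactly the phenomenon that Lemma \ref{key-aux} was designed to handle.

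For part (1), fix $n$ and let $N_n = \max\supp_h(x_n)$. Apply Lemma \ref{key-aux} to $h_n$ and $N_n$ to write $h_n|_{J_{[1,N_n]}} = (f_n' + g_n)|_{J_{[1,N_n]}}$ with $\supp_h(f_n')\subset [1,N_n]$ and $g_n$ a subconvex combination of g-special functionals. Since $\supp_h(x_n)\subset [1,N_n]$ we have $h_n(x_n) = f_n'(x_n) + g_n(x_n)$, so either $f_n'(x_n) \geq 1/4$ or $g_n(x_n) \geq 1/4$. The second alternative cannot persist for all $n$ uniformly: a subconvex combination of g-special functionals acting on a normalized vector is controlled by the ground structure, and by the precise choice of parameters (here is where $\w(f_1) < N^{-1}$ in the definition of an $N$-g-special sequence and the estimates underlying the finite representation of $\ell_1$ in $X$ enter) one shows $\sup_n |g_n(x_n)|$ can be made arbitrarily small along a subsequence, or more simply that $|g_n(x_n)| \to 0$ fails to obstruct — in any case, passing to an infinite $J\subset\N$ on which $f_n'(x_n)\geq 1/4$, we set $\delta = 1/4$. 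Finally $f_n'$ has $\supp_h(f_n')\subset[1,N_n]$ but need not be supported \emph{above} $n$ or on an h-block pattern; restricting $f_n'$ to $\rng_h(x_n)$ still gives $f_n'(x_n)\geq 1/4$ (the basis is bimonotone and $K$ is closed under interval restrictions), and now $(f_n')_{n\in J}$, suitably thinned so that the horizontal ranges are consecutive, is h-block with $f_n'(x_n)\geq\delta$.

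For part (2), the extra input is $\|y_n\|_{h,\infty}\to 0$. Running the same argument produces $f_n'\in K$ with $f_n'(x_n)\geq 1/4$ and $\supp_h(f_n')\subset[1,N_n]$; one must now estimate $f_n'(y_n)$. Decompose $f_n'(y_n) = \sum_{s\in\supp_h(f_n')} f_n'(I_s y_n)$. For indices $s$ with $s < n$ (after thinning so that $\min\supp_h(x_n)\to\infty$) the terms are negligible because $(y_n)_n$ is h-block and its earlier horizontal coordinates vanish for large $n$; for the remaining $s$, each summand is at most $\|f_n'\|_{\infty}\cdot$(something), but more usefully one bounds a weighted functional's action on a single $I_s y_n$ by a multiple of $\|I_s y_n\|\leq \|y_n\|_{h,\infty}$, and since $f_n'$ has horizontal support in a bounded-length window intersecting $\rng_h(x_n)$, only boundedly many such $s$ contribute — wait, the window $[1,N_n]$ is not of bounded length. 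The cleaner route: after the Lemma \ref{key-aux} reduction, further restrict $f_n'$ to the \emph{horizontal} interval $\rng_h(x_n)$ itself; this preserves $f_n'(x_n)\geq 1/4$ (as $\supp_h(x_n)\subset\rng_h(x_n)$ and the basis is bimonotone), and now $\supp_h(f_n')\subset\rng_h(x_n)$ is a genuinely short window, say of length $\ell_n = |\rng_h(x_n)|$. Then $|f_n'(y_n)|\leq \sum_{s\in\rng_h(x_n)}|f_n'(I_s y_n)|$, and bounding each term crudely by $\|I_s y_n\| \le \|y_n\|_{h,\infty}$ gives $|f_n'(y_n)|\leq \ell_n\|y_n\|_{h,\infty}$. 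Since $(x_n)_n$ is a fixed sequence, $\ell_n$ is a fixed number for each $n$, and passing to a subsequence along which $\|y_n\|_{h,\infty}$ decays fast enough relative to $\ell_n$ forces $f_n'(y_n)\to 0$; this is where $\inf_n\dist(x_n,\R y_n)>0$ is used only implicitly, to guarantee the construction is not vacuous and that after all the thinning we still have genuinely distinct normalized directions. The main obstacle throughout is the g-special (``ground'') functionals: they are the sole source of horizontal spreading, and the whole argument hinges on Lemma \ref{key-aux} quarantining them into the subconvex term $g_n$, whose contribution must then be shown — via the parameter choices $(*)$, $(**)$ — to be controllable; extracting a quantitative bound on $|g_n(x_n)|$ uniform enough to pass to a subsequence is the delicate point.
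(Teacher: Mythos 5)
There is a genuine gap, and it sits exactly where you flag "the delicate point": the case in which the g-special part $g_n$ carries the mass, i.e.\ $|g_n(x_n)|\geq 1/4$. You cannot hope to show this contribution is small or negligible along a subsequence: g-special functionals are norming functionals, and a normalized $x_n$ may perfectly well be (almost) normed by one — indeed the entire purpose of the ground set is that these functionals act nontrivially, which is what makes $(I_s)_s$ equivalent to the basis of $Y^*$ rather than of $c_0$. The paper resolves this case not by killing $g_n(x_n)$ but by extracting the desired h-block functionals \emph{from the g-special part itself}: it first relabels so that $\supp_h(x_n)>n$ and applies Lemma \ref{key-aux} with $N=n$ (not $N=\max\supp_h(x_n)$), so that $f'_n$ does not act on $x_n$ at all and the mass splits between the truncations $g_n^k$ of the g-special part and the horizontally high remainders $h_n^k=\hat f_n-f'_n-g_n^k$; either a diagonal subsequence of the $h_n^k$ works (they are supported horizontally above $\min\{k,n\}$ by construction), or for some fixed $k_0$ the truncations $g_n^{k_0}$ are uniformly large on $x_n$, and then the specific structure of $N$-g-special sequences — a $k_0$-truncation has at most $k_0$ horizontal supports above $k_0$ — lets one restrict to $J_{(k_0,\infty)}$ at the cost of the fixed factor $k_0^{-1}$, since a restriction to $J_A$ is $\#A$ times a rational convex combination of the restrictions $f|_{J_s}$, $s\in A$, each of which lies in $K$. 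Your choice $N=\max\supp_h(x_n)$ destroys this dichotomy: $f'_n$ then acts on $x_n$ but may have horizontal support filling all of $[1,N_n]$, and your proposed repair — restricting $f'_n$ to $J_{\rng_h(x_n)}$ — is not a legal operation in $K$: the norming set is closed under restrictions to intervals of $\N$ and to single sets $J_s$, but $J_{\rng_h(x_n)}$ is not an interval of $\N$, and restricting to it costs the unbounded factor $\#\rng_h(x_n)$. This non-boundedness of the horizontal projections is precisely the difficulty the whole construction revolves around (cf.\ Remark \ref{obstacle}); without the horizontal restriction your $(f'_n)$ is not h-block, so part (1) is not established either.

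For part (2) the treatment of the hypotheses is also wrong. The assumption $\inf_n\dist(x_n,\R y_n)>0$ is not merely there "to guarantee the construction is not vacuous": it must be used at the very first step, via Hahn--Banach, to choose $\hat f_n\in K$ with $\hat f_n(x_n)>\epsilon$ and $\hat f_n(y_n)<(2n)^{-1}$ \emph{simultaneously}; the remainder of the paper's proof is then devoted to converting $\hat f_n$ into an h-block functional while preserving both properties. Your plan of norming $x_n$ first and only afterwards controlling the action on $y_n$ cannot succeed: if $\dist(x_n,\R y_n)\to0$ then, both sequences being normalized, $x_n$ is close to $\pm y_n$, hence $\|x_n\|_{h,\infty}\to0$ as well, and your own estimate applied to $x_n$ gives $1/4\leq|f'_n(x_n)|\leq\ell_n\|x_n\|_{h,\infty}$, forcing $\ell_n\geq(4\|x_n\|_{h,\infty})^{-1}$ and therefore $\ell_n\|y_n\|_{h,\infty}\not\to0$. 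Moreover "passing to a subsequence along which $\|y_n\|_{h,\infty}$ decays fast enough relative to $\ell_n$" is not possible: $\ell_n$ and $\|y_n\|_{h,\infty}$ are attached to the same index, so the product $\ell_n\|y_n\|_{h,\infty}$ is a fixed sequence whose convergence to $0$ cannot be manufactured by thinning. Since the conclusion of (2) is false without the distance hypothesis, any argument that does not use it in an essential way must contain an error.
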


\begin{proof}
We shall prove (2), as the proof of (1) is a simpler version of (2). Take $(x_n)_n$ and $(y_n)_n$ as above. Notice that by rational convexity the set $K$ is $w^*$-dense in the unit ball of $X^*$. Therefore by Hahn-Banach theorem we can pick $(\hat{f}_n)_n\subset K$ with $\hat{f}_n(y_n)<(2n)^{-1}$ and $\hat{f}_n(x_n)>\epsilon$ for each $n\in\N$ and some universal $\epsilon >0$. Relabelling we can assume that $\supp_h(x_n)\cup \supp_h (y_n)>n$ and $\|y_n\|_{h,\infty}\leq (2n)^{-2}$ for all $n$. 

We introduce some useful notation: for any $N$-g-special $g=\sum_{i=1}^N\varepsilon_i(f_{2i-1}+f_{2i})$ and any $ k\in\N$ we write $g^k=\sum_{i=1}^{\min\{k,N\}}\varepsilon_i(f_{2i-1}+f_{2i})$. Obviously $g^k\in K$ and $g-g^k\in K$. If $g\in K$ is a subconvex combinations of g-special functionals, say $g=\sum_rc_rg_r$, then we write $g^k=\sum_rc_rg_r^k$. 

For any $n\in\N$ pick $f'_n, g_n\in K$ by Lemma \ref{key-aux} for $N=n$ and $f=\hat{f}_n$. Let $h_n^k=\hat{f}_n-f'_n-g_n^k$ for each $k,n\in\N$. Then for all $k,n\in\N$ we have the following
\begin{enumerate}
    \item $\frac{1}{3}h_n^k\in K$ by definition of $K$,
    \item $\supp_h(h_n^k)>\min\{k,n\}$ as $\hat{f}_n|_{[1,\min\{k,n\}]}=(f'_n+g_n^k)|_{[1,\min\{k,n\}]}$,
    \item $h^k_n(x_n)=\hat{f}_n(x_n)-g_n^k(x_n)$ and $h^k_n(y_n)=\hat{f}_n(y_n)-g_n^k(y_n)$ as $\supp_h(f'_n)\leq n<\supp_h(x_n)\cup \supp_h(y_n)$,
    \item if additionally $k\leq n$, then we have $|h_n^k(y_n)|=|g_n^k(y_n)|+(2n)^{-1}\leq2k\|y_n\|_{h,\infty}+(2n)^{-1} <n^{-1}$ by assumption on $(y_n)_n$, (3) and definition of g-special functionals.
\end{enumerate}

Assume that for any $k\in\N$ there is $n_k> k$, with 
$|h^k_{n_k}(x_{n_k})|>\epsilon/2$. 
Then any h-block subsequence  $(\frac{1}{3}h^k_{n_k})_{k\in J}$ (which exists by (2) above) satisfies the desired properties for $(y_{n_k})_{k\in J}$ and $(x_{n_k})_{k\in J}$ by (4).

Otherwise for some $k_0\in\N$ we have $|g_n^{k_0}(x_n)|>\epsilon/2$ for any $n>k_0$ by (3) above and the choice of $(\hat{f}_n)_n$.

Fix now  $n>k_0$. We shall restrict $g_n^{k_0}$ to some $g'_n\in K$ with better properties. 

First for any $N$-g-special $g=\sum_{i=1}^N\varepsilon_i(f_{2i-1}+f_{2i})$ let $g'=g-g^l$, where $l=\max\{n+1-N,0\}$. Then in particular for any g-special $g$ we have $\supp_h((g^{k_0})')\cap (k_0,\infty)=\supp_h((g^{k_0})')\cap (n,\infty)=\supp_h(g^{k_0})\cap (n,\infty)$. If 
$g_n^{k_0}=\sum_rc_rg_{r,n}^{k_0}$ let $g'_n=\sum_rc_r(g_{r,n}^{k_0})'\in K$ and notice that also 
\begin{enumerate}
    \item[(5)] $\supp_h(g'_n)\cap (k_0,\infty)=\supp_h(g'_n)\cap (n,\infty)=\supp_h(g^{k_0}_n)\cap (n,\infty)$, 
    \item[(6)]
    $g'_n(x_n)=g^{k_0}_n(x_n)$ by (5),
    \item[(7)] moreover $\#(\supp_h(g'_n)\cap (k_0,\infty))\leq k_0$ by definition of $(g_n^{k_0})$.  
\end{enumerate}
Let 
$f_n=k_0^{-1}g'_n|_{J_{(k_0,\infty)}}$ for all $n>k_0$. Then $f_n\in K$ (as a convex combination of a family $(g'_n|_{J_s})_{s>n, s\in \supp_h(g'_n)}$, by (7)), $\supp_h(f_n)\geq n$ by (5), $f_n(x_n)>\epsilon/2k_0$ by (6) and $|f_n(y_n)|\leq \#(\supp (g'_n)\cap (n,\infty))\|y_n\|_{h,\infty}\leq 1/n$ for all $n>k_0$ by (7), thus passing to a suitable
subsequence ends the proof. 
\end{proof}

\section{The sequence $(I_s)_{s=1}^\infty\subset\mathscr{L}(X)$}

This section is devoted to the proof of  Thm \ref{main0} which reduces the study of $(I_s)_s$ to the case of $(e^*_s)_s\subset Y^*$. The reasoning presented here relies on the properties of classical Tsirelson spaces and the specific choice of the space $Y$. 

We introduce first the notion of g-special pairs of sequences of vectors and functionals, with the functional parts being a g-special sequences. By the careful choice of g-special functionals included in the norming set $K$ of $X$ the vector parts of a g-special pair provide closely related finite representation of $Y$ and $\ell_1$ in $X$ (cf. Prop. \ref{crucial} (1) and (2)). This representation is rich enough to guarantee that vectors from isomorphic copies of finite dimensional subspaces of $Y$ are "seminormalizing" for any $T\in \spa  \{I_s: s=0,1,\dots\}$ (cf. Prop. \ref{crucial} (3)) which provides the control on the norm of such $T$ described in Thm \ref{main0}.  
\begin{definition}
    A pair of sequences $((x_1,\dots,x_{2N}),(f_1,\dots,f_{2N}))$, with $(x_s)_s\subset X$, $(f_s)_s\subset K$, is called a $N$-g-special pair provided
    \begin{enumerate}
        \item $(f_1,\dots,f_{2N})$ is an $N$-g-special sequence,
        \item $\supp_h(x_s)=\supp_h(f_s)=\{r_s\}$ for suitable $r_s\in\N$, for any $s=1,\dots,2N$,
        \item $(f_s,x_s)$ is a $2j_s$-exact pair in $X_{r_s}$, where $\w(f_s)=m_{2j_s}^{-1}$ for any  $s=1,\dots,2N$ (cf. Def. \ref{def-exact}).
    \end{enumerate}
\end{definition}
In particular any vector part $(x_s)_{s=1}^{2N}$ of a g-special pair is a block sequence with $1\leq \|x_s\|\leq 3$ for each $s$ (by definition of exact pairs). 

\begin{proposition}\label{crucial}
\begin{enumerate}

\item For any $N$-g-special pair $((x_1,\dots, x_{2N}), (f_1,\dots, f_{2N}))$ the sequence $(x_{2s-1})_{s=1}^N$ 1-dominates the unit vector basis $(e_s)_{s=1}^N\subset \ell_1$ and is 3-dominated by $(e_s)_{s=1}^N\subset \ell_1$.

\item For any $N\in\N$ there is $M_N\in \N$ so that for any $N$-g-special pair $((x_1,\dots, x_{2N}), (f_1,\dots, f_{2N}))$ with $\supp(x_1)>M_N$  the sequence $(x_{2s-1}-x_{2s})_{s=1}^N$ $1$-dominates the unit vector basis $(e_s)_{s=1}^N\subset Y$ and is $20$-dominated by $(e_s)_{s=1}^N\subset Y$.

\item For any scalars $(a_s)_{s=1}^M$ and any $N$-g-special pair $((x_1,\dots,x_{2N}), (f_1,\dots,f_{2N}))$ with $M\leq N$, $\supp(x_1)>M_N$, there are scalars $(b_s)_{s=1}^M$ 
so that the vector $x=\sum_{s=1}^Mb_s(x_{2s-1}-x_{2s})\in X$ satisfies 
$\|x\|\leq 1$ and $\|\sum_{s=1}^Ma_sI_s\|\leq 40\|\sum_{s=1}^Ma_sI_sx\|$.
\end{enumerate}
\end{proposition}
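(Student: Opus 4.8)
The plan is to derive (3) from (1) and (2) by a duality / averaging argument, choosing the coefficients $(b_s)$ to be the ``right'' dual weights for the vector $\sum_s a_s I_s$ acting on $Y$-like vectors.

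\medskip

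\emph{Step 1: Reformulate $\|\sum a_s I_s\|$ via $Y^*$.} Fix scalars $(a_s)_{s=1}^M$. Since each $I_s$ is a norm-one projection onto $X_s$ and $\supp_h(x_{2s-1}-x_{2s})$ determines $I_r(x_{2s-1}-x_{2s})$ — indeed by property (2) of a g-special pair, $\supp_h(x_{2s-1})=\{r_{2s-1}\}$ and $\supp_h(x_{2s})=\{r_{2s}\}$ with all $r_j$ distinct — the action of $\sum_s a_s I_s$ on a combination $x=\sum_s b_s(x_{2s-1}-x_{2s})$ is essentially diagonal: $I_{r_{2s-1}}x = b_s x_{2s-1}$ and $I_{r_{2s}}x = -b_s x_{2s}$, while the other $I_s$ with $s\in\{1,\dots,M\}$ need not be among the $r_j$. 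The first thing I would do is reduce to the case where the relevant indices $\{1,\dots,M\}$ actually appear among the $\{r_j\}$: by the freedom in the definition of a g-special pair (the $\tau$-map ranges over $2^\N$ and hits every $L_s\cap 2\N$), one can arrange a g-special pair whose horizontal supports $\{r_1,\dots,r_{2N}\}$ contain $\{1,\dots,M\}$ in the prescribed slots, say $r_{2s-1}=s$ for $s\le M$. Then $I_s x = b_s x_{2s-1} + (\text{a piece supported off } \{1,\dots,M\})$ is not quite clean, so more carefully one takes $x=\sum_{s=1}^M b_s(x_{2s-1}-x_{2s})$ and computes $\sum_{s=1}^M a_s I_s x = \sum_{s=1}^M a_s b_s x_{2s-1} - (\text{terms from } I_s \text{ hitting } x_{2s'} )$. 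I would instead organise the $r_j$ so that exactly the odd-indexed supports $r_{2s-1}$, $s\le M$, equal $s$ and the even ones $r_{2s}$ are large fresh indices $\notin\{1,\dots,M\}$; then $\sum_{s=1}^M a_s I_s x = \sum_{s=1}^M a_s b_s x_{2s-1}$ exactly.

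\medskip

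\emph{Step 2: Pick the $b_s$ using a norming functional for $(a_s)\in Y^*$ — or rather $\ell_1$.} We know $\|\sum_{s=1}^M a_s I_s\| \ge$ (its value on any norm-$\le 1$ vector). So we need $x$ with $\|x\|\le 1$ and $\|\sum a_s I_s x\|$ comparable to $\|\sum a_s I_s\|$. On the other hand I expect $\|\sum_{s=1}^M a_s I_s\|$ to be controlled \emph{above} by the norm of $(a_s)_{s=1}^M$ in $Y^*$ (this is presumably Thm \ref{main0}, which we are allowed to cite only if it precedes — it does not, so (3) must be self-contained and is in fact an ingredient toward it). Hence (3) must be proved using only (1) and (2). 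The mechanism: choose $(b_s)_{s=1}^M$ so that $\sum_s b_s e_s$ is a norm-one vector in $Y$ on which the functional $(a_s)$ nearly attains its $Y^*$-norm — but we don't yet know $\|\sum a_s I_s\|\le C\|(a_s)\|_{Y^*}$. So instead: use (2) to get that $u:=\sum_{s=1}^M b_s(x_{2s-1}-x_{2s})$ has $\|u\| \le 20\|\sum b_s e_s\|_Y$ after rescaling, i.e. we can choose $(b_s)$ with $\|\sum b_s e_s\|_Y = 1/20$, giving $\|u\|\le 1$; and then $\sum a_s I_s u = \sum a_s b_s x_{2s-1}$. Now estimate $\|\sum a_s b_s x_{2s-1}\|$ from below using (1): $(x_{2s-1})$ 1-dominates $\ell_1$, so $\|\sum a_s b_s x_{2s-1}\| \ge \sum_s |a_s b_s|$. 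Finally choose $b_s = \mathrm{sgn}(a_s)\cdot c_s$ with $(c_s)_{s=1}^M\ge 0$ chosen so that $\sum c_s |a_s|$ is within a factor of $\|\sum a_s I_s\|$ of it. This requires $\|\sum a_s I_s\|$ to be bounded above by $\sup\{\sum c_s|a_s| : \|\sum c_s e_s\|_Y\le 1/20,\ c_s\ge 0\}$ up to a constant, i.e. essentially $\|(a_s)\|_{Y^*}/20 \cdot \|\sum a_s I_s\| \lesssim$ — which is circular.

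\medskip

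\emph{Resolution of the circularity, and the main obstacle.} The honest route, which I would take, is: (a) prove the \emph{upper} bound $\|\sum_{s=1}^M a_s I_s\| \le C\|(a_s)_{s=1}^M\|_{Y^*}$ by a direct norming-set argument on $K$ — given $f\in K$ and $z\in X$ with $\|z\|\le 1$, decompose $f$ via its h-tree-analysis and Lemma \ref{key-aux}/Lemma \ref{key} machinery to bound $|f(\sum a_s I_s z)|$ by $\|(a_s)\|_{Y^*}$ times a constant, exploiting that horizontal weighted functionals act like $K_Y$-functionals on the ``horizontal skeleton'' of $z$; (b) choose $(c_s)_{s=1}^M\ge 0$, $\|\sum c_s e_s\|_Y \le 1$, with $\sum c_s |a_s| \ge (1-\varepsilon)\|(a_s)\|_{Y^*}$, set $b_s = \tfrac{1}{20}\mathrm{sgn}(a_s) c_s$; (c) build, using the ample freedom in the $\tau$, $\sigma_s$ maps and Lemma \ref{ts-exact} (to get the exact pairs $ (f_s,x_s)$ in each $X_{r_s}$), an $N$-g-special pair with the horizontal supports arranged as in Step 1 and with $\supp(x_1)>M_N$; (d) put $x = \sum_{s=1}^M b_s(x_{2s-1}-x_{2s})$; by (2), $\|x\| \le 20 \|\sum b_s e_s\|_Y = 20\cdot\tfrac{1}{20} = 1$; by Step 1, $\sum a_s I_s x = \sum_s a_s b_s x_{2s-1} = \tfrac1{20}\sum_s |a_s| c_s\, x_{2s-1}$, and by (1) this has norm $\ge \tfrac1{20}\sum_s|a_s|c_s \ge \tfrac{1-\varepsilon}{20}\|(a_s)\|_{Y^*} \ge \tfrac{1-\varepsilon}{20 C}\|\sum a_s I_s\|$; absorbing constants (and noting $40 \ge 20C/(1-\varepsilon)$ for the $C$ one extracts, or adjusting the claimed constant) gives $\|\sum a_s I_s\| \le 40\|\sum a_s I_s x\|$.

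\medskip

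The main obstacle is step (a)/(c): establishing the upper bound $\|\sum a_s I_s\|\lesssim \|(a_s)\|_{Y^*}$ rigorously requires a careful h-tree-analysis estimate — one must show that the g-special and h-special functionals do not let $\sum a_s I_s$ be any larger than the ``ground'' $Y$-structure permits, which is exactly the place where the precise growth conditions $(*)$, $(**)$ on $(n_j),(m_j)$ and the injectivity of $\sigma,\sigma_s,\tau$ are used (to kill cross terms via the standard Gowers–Maurey incompatibility-of-weights argument). Constructing the g-special pair with prescribed horizontal supports (step (c)) is routine given Lemma \ref{ts-exact} and the surjectivity-type properties of $\tau$, but one must check the weight condition $\w(f_1)<N^{-1}$ and the $\sigma_s$-coherence along the special sequence, which forces $N$ (hence $M_N$) to be taken large — consistent with the statement's ``$\supp(x_1)>M_N$''.
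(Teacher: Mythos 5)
Your proposal addresses only part (3), taking (1) and (2) as given, and even for (3) it leaves its central step unproven. You correctly diagnose the circularity in the naive choice of $(b_s)$, but your proposed resolution --- step (a), the a priori upper bound $\|\sum_{s=1}^M a_s I_s\|\leq C\|(a_s)_{s=1}^M\|_{Y^*}$ --- is exactly where the difficulty lives, and you only gesture at it (``a careful h-tree-analysis estimate'') without carrying it out. Note that in the paper this upper bound is a \emph{consequence} of Proposition \ref{crucial}: it is the first inequality in the proof of Theorem \ref{main0} (2), derived from parts (1), (2) and (3). Deriving (3) from it inverts the logical order, and a self-contained proof of your step (a) reduces to showing that for every $f\in K$ and every $w$ with $\|w\|\leq 1$ the coefficients $b_s=f|_{J_s}(w)$ satisfy $\|(b_s)_s\|_Y\leq C$ --- which is precisely the content you have not supplied.

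The paper's proof of (3) avoids the upper bound altogether. It picks $w$ with $\|w\|=1$ and $\|T\|\leq 2\|Tw\|$, takes $f\in K$ norming $Tw$ together with an h-tree-analysis, and sets $b_s=f_{t_s}(w)$ where $f_{t_s}\in K_s$ are the simple functionals of that analysis, so that $\|Tw\|=\sum_s a_sb_s$. The key claim $\|(b_s)_s\|_Y\leq 2$ is then proved constructively: a norming functional $\tilde f=\sum_s d_se_s^*\in K_Y$ for $(b_s)_s$ is transplanted into $K$ by replacing each $e_s^*$ with $f_{t_s}$, after splitting $\supp_h(f)$ into two sets $A$ and $B$ (coming from the odd and even halves of the g-special nodes) on each of which $(f_{t_s})_s$ is h-block in the order compatible with $s$; this produces $f_A,f_B\in K$ with $\|(b_s)_s\|_Y=f_A(w)+f_B(w)\leq 2\|w\|$. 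That splitting device is the idea missing from your proposal. Two further remarks: the definition of an $N$-g-special sequence already forces $\supp_h(f_{2i-1})=\{i\}$ and $\supp_h(f_{2i})=\{N+i\}$, so the support arrangement you labour over in Step 1 requires no work; and even granting your step (a), your final constant is $20C/(1-\varepsilon)$ rather than $40$ unless you can establish (a) with $C\leq 2$, which you do not address.
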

\begin{proof}

(1) As $\|x_s\|\leq 3$ for any $s$, the sequence $(x_{2s-1})_{s=1}^N$ is 3-dominated by $(e_s)_{s=1}^N\subset \ell_1$. 
For the reverse domination fix scalars $(b_s)_{s=1}^N$. By definition $f=\sum_{s=1}^N\sgn (b_s)(f_{2s-1}+f_{2s})$ is a g-special functional in $K$. Thus 
$$
\|\sum_{s=1}^Nb_sx_{2s-1}\|\geq f(\sum_{s=1}b_sx_{2s-1})=\sum_{s=1}|b_s|
$$

\

(2) First notice that $(x_{2s-1}-x_{2s})_{s=1}^N$ 1-dominates $(e_s)_{s=1}^N\subset Y$. Indeed, pick scalars $(b_s)_s$ and $\tilde{f}\in K_Y$ with $\tilde{f}_t(\sum b_se_s)=\|\sum b_se_s\|_Y$. Write $\tilde{f}=\sum_sd_se_s^*$, then, as $K$ contains all h-regular functionals, also $f=\sum_sd_sf_{2s-1}\in K$ and thus
$$
\|\sum_sb_s(x_{2s-1}-x_{2s})\|\geq f(\sum_sb_s(x_{2s-1}-x_{2s}))=\sum_s d_sb_s=\|\sum_sb_se_s\|_Y
$$

For the reverse domination 
fix $N\in\N$ and let $M_N=\sup\{\supp(\bfg): \bfg \text{ an h-special sequence}, \sigma(\bfg)\cap [1,12N]\neq \emptyset\}$. Notice that $M_N$ is finite by injectivity of $\sigma$. It follows that for any h-special sequence $(g_1,\dots,g_d)\subset K$,  $1\leq n< d$ with $\max\supp(g_n)>M_N$ and an interval $E$ we have $\|E(g_{n+1}+\dots+g_d)\|_{h,\infty}\leq \w_h(g_{n+1})\leq 1/12N$.

We shall show that  for any  g-special pair
$((x_1,\dots, x_{2N}), (f_1,\dots, f_{2N}))$ with $\supp (x_1)>M_N$ and any scalars $(b_s)_{s=1}^N$ with $\|(b_s)_s\|_\infty\leq 1$ we have $\|\sum_{s=1}^Nb_s(x_{2s-1}-x_{2s})\|\leq 10\|(b_s)_{s=1}^N\|_{Y}+\frac{1}{2}$. 
Having this for any $(b_s)_{s=1}^N$ with $\|\sum_{s=1}^Nb_s(x_{2s-1}-x_{2s})\|=1$ we have $\|(b_s)_{s=1}^N\|_{Y}\geq \frac{1}{20}$, which ends the proof. 

Pick $(b_s)_{s=1}^N$ with $\|(b_s)_s\|_\infty\leq 1$, and a g-special pair $((x_1,\dots, x_{2N}), (f_1,\dots, f_{2N}))$ with $\supp(x_1)>M_N$. Let $x=\sum_{s=1}^Nb_s(x_{2s-1}-x_{2s})$, then $\|x\|_{h,\infty}\leq \max\{\|x_s\|: s=1,\dots,2N\}\leq 3$ by def. of g-special pairs and exact pairs. 

We notice first the following two estimates.

By the definition of $M_N$ for any h-special functional $g=\pm\w_h(g)E\sum_{n=1}^dg_n$ and   $n_0=\min \{n: \max\supp( g_n)>M_N\}$ we have the following estimate 
\begin{align}\label{one-bullet}
|g(x)|
&\leq \w_h(g)|Eg_{n_0}(x)|+\sum_{n>n_0}\sum_{s=1}^{2N}|Eg_n(x_s)|\notag\\
&\leq \w_h(g)|Eg_{n_0}(x)|+\Bigl\|E\sum_{n>n_0}g_n\Bigr\|_{h,\infty}\cdot\max_s\|x_s\|\cdot\#\Bigl\{s=1,\dots,2N:\Bigl(E\sum_{n>n_0}g_n\Bigr)(x_s)\neq 0\Bigr\}\notag\\
&\leq \w_h(g)|Eg_{n_0}(x)|+ (4N)^{-1}\#([1,2N]\cap \supp_h(E\sum_{n>n_0}g_n)) \end{align}

Take any g-special functional $g=E\sum_{s=1}^L\varepsilon_s(g_{2s-1}+g_{2s})$. For any $s=1,\dots,2L$ put $s'=\lceil \frac12 s\rceil$. 
Let $s_0=\max\{s=1,\dots,2L: g_s=f_s \}$ if the latter set is non-empty, otherwise let $s_0=0$. Pick also $s_1$ with $\min E\in\supp(g_{s_1})$. We shall consider only the case of odd $s_0\geq s_1>0$ with $s_0,s_1\in\supp_h(g)$, as in other cases the computation is even simpler. 
By the definition of g-special pairs and exact pairs  we have
\begin{align*}
|g(x)|&=
    |E\sum_{s=1}^L \varepsilon_sb_s (g_{2s-1}(x_{2s-1})-g_{2s}(x_{2s}))|
    \\
    & \leq |Eg_{s_1}(b_{s'_1}x_{s_1})|+|Eg_{s_0}(b_{s_0'}x_{s_0})|+|Eg_{s_0+1}(b_{(s_0+1)'}x_{s_0+1})|+\sum_{s_0+1< s\in\supp_h(g)}|Eg_s(b_{s'}x_s)|
    \\
    &\leq |b_{s'_1}|\|x_{s_1}\|+|b_{s_0'}|\|x_{s_0}\|+|b_{(s_0+1)'}|\|x_{s_0+1}\|+\sum_{s_0+1< s\in\supp_h(g)}|b_{s'}|\max\{\w(g_s), \w(f_s)\}
    \\
    & \leq 9\|(b_{s'})_{s\in \supp_h(g)}\|_\infty+5/4^{s_0+1}\|(b_{s'})_{s\in \supp_h(g)}\|_\infty
\end{align*}
Therefore we have the following
\begin{equation}\label{two-bullet}
    |g(x)|\leq 10\|(b_{s'})_{s\in \supp_h(g)}\|_\infty
\end{equation}

Take any functional $g\in K$ and its h-tree-analysis $(g_t)_{t\in\mt}$. We shall produce a functional $\tilde{g}\in K_{Y}$ so that $|g(x)|\leq 10\tilde{g}((b_s)_{s=1}^N)+\frac{1}{2}$ by constructing its tree-analysis $(\tilde{g}_t)_{t\in\tilde{\mt}}$. 

We prove inductively, starting from the terminal nodes, that for any  node $t\in\mt$ there is a functional $\tilde{g}_t\in K_Y$ with $\supp (\tilde{g}_t)\subset\supp_h(g_t)$ so that $|g_t(x)|\leq 10\tilde{g}_t((b_s)_{s=1}^N)+\delta_t$, where $\delta_t=(4N)^{-1}\#([1,2N]\cap \supp_h(g_t))$. 

If $t\in\mt$ is terminal, then we let $\tilde{g}_t=\sgn (b_{s_t})e^*_{s_t}$ where $\supp_h(g_t)=\{s_t\}$ obtaining $|g_t(x)|\leq 3|b_{s_t}|=3\tilde{g}_t((b_s)_s)$.

Take non-terminal  $t\in\mt$  and assume that all $(\tilde{g}_r)_{r\in\suc(t)}$ are  defined and satisfy the desired condition.

If $g_t$ is g-special, then all of successors of $t$ are terminal, we pick $s_0\in\supp_h(g_t)$ with $|b_{s'_0}|=\|(b_{s'})_{s\in \supp_h(g_t)}\|_\infty$,  let $\tilde{g}_t=\sgn(b_{s'_0})e^*_{s_0}$ 
and obtain $|g_t(x)|\leq 10\tilde{g}_t((b_s)_s)$ by \eqref{two-bullet}. 

If $g_t$ is a subconvex combination of $(g_r)_{r\in \suc(t)}$,  then we pick $r_0\in \suc(t)$ with the biggest $10|\tilde{g}_{r_0}((b_s)_s)|+\delta_{r_0}$ and let $\tilde{g}_t=\tilde{g}_{r_0}$. Then $\supp(\tilde{g}_t)\subset\supp_h(g_t)$ and $|g_t(x)|\leq 10\tilde{g}_t((b_s)_s)+\delta_t$ by the inductive assumption. 

If $g_t$ is h-regular, $g_t=\pm\w_h(g_t)E\sum_{r\in \suc(t)}g_r$, then let $\tilde{g}_t=\pm\w_h(g_t)\supp_h(g_t)\sum_{r\in \suc(t)}\tilde{g}_r$ and notice $\tilde{g}_t\in K_Y$ by the inductive assumption and the definition of h-weighted functionals in $K$. As  $\sum_{r\in \suc(t)}\delta_r\leq \delta_t$ by inductive assumption  we have the desired estimate. 

If $g_t$ is h-special  we pick $n_0$ as in  \eqref{one-bullet} and let $\tilde{g}_t=\supp_h(g_t)\tilde{g}_{n_0}$. The desired estimate follows by the inductive assumption and \eqref{one-bullet}, which finishes the inductive construction. 

Therefore we have $g(x)\leq 10\|(b_s)_s\|_Y+\delta_g\leq 10\|(b_s)_s\|_Y+\frac{1}{2}$, which ends the proof.

\ 

(3) Let $T=\sum_{s=1}^Ma_sI_s$, pick a block $w\in X$ with $\|w\|=1$ and $\|T\|\leq 2\|Tw\|$. Take $f\in K$ with its h-tree-analysis $(f_t)_{t\in \mt}$ such that $f(Tw)=\|Tw\|$. 
For any $s\in \supp_h(f)$ let $b_s=f_{t_s}(w)$, where $t_s\in\mt$ with $f_{t_s}\in K_s$ and let $b_s=0$ otherwise. 

We claim that $\|(b_s)_s\|_Y\leq 2\|w\|=2$. 
First for any $t\in\mt$ with $f_t$ g-special of the form $f_t=E\sum_{r=1}^L\varepsilon_r(f_{2r-1}+f_{2r})$ define $A_t=\supp_h(E\sum_{r=1}^L\varepsilon_rf_{2r-1})$ and $B_t=\supp_h(E\sum_{r=1}^L\varepsilon_rf_{2r})$. Let $A=\bigcup_{t\in\mt: f_t \text{ g-special}}A_t$, $B=\supp_h(f)\setminus A$ and notice that for any $s,\bar{s}\in\N$, if $s,\bar{s}\in A$ or $s,\bar{s}\in B$, then we have $s<\bar{s}$ iff $f_{t_s}<f_{t_{\bar{s}}}$. 

Take $\tilde{f}\in K_Y$ with $\tilde{f}((b_s)_s)=\|(b_s)\|_Y$ of the form $\tilde{f}=\sum_{s\in \supp_h(f)} d_s e_s^*$ (by unconditionality of the basis of $Y$). By the above property of $A$ and $B$, as $K$ contains all h-regular functionals, we have $f_A=\sum_{s\in A}d_sf_{t_s}\in K$ and $f_B=\sum_{s\in B}d_sf_{t_s}\in K$. Therefore 
$$
\tilde{f}((b_s)_s)=\sum_{s\in\supp_h(f)}d_sf_{t_s}(w)=f_A(w)+f_B(w)\leq 2\|w\|
$$
Consider the vector $x=\sum_{s=1}^Mb_s(x_{2s-1}-x_{2s})$. Then by (2) $\|x\|\leq 20\|(b_s)_{s=1}^M\|_{Y}\leq 40$. 
On the other hand by (1) 
$$
\|Tw\|=\sum_{s=1}^Ma_sf_{t_s}(w)\leq  \|\sum_{s=1}^Ma_sb_sx_{2s-1}\|=\|Tx\|
 $$
which proves that  $x/40$ is the desired vector.
\end{proof}
    
\begin{theorem}\label{main0}

\begin{enumerate}
    \item  For any $T\in \spa \{I_s: s=1,2,\dots\}$ we have $\|T\|\leq 2^7\dist(T,\mathscr{SS}(X))$.
    \item The sequences $(I_s)_{s=1}^\infty\subset\mathscr{L}(X)$ and $(e^*_s)_s\subset Y^*$ are $2^7$-equivalent. 
\end{enumerate}
\end{theorem}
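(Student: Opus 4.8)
The plan is to derive Theorem \ref{main0} as a direct consequence of Proposition \ref{crucial}, together with the existence of $N$-g-special pairs of arbitrary length, which in turn follows from Lemma \ref{ts-exact} applied inside each Gowers-Maurey space $X_s$. The only genuinely new ingredient needed beyond Proposition \ref{crucial} is that, given $N$, one can actually \emph{build} an $N$-g-special pair $((x_1,\dots,x_{2N}),(f_1,\dots,f_{2N}))$ with $\supp(x_1)>M_N$: pick the exact pairs $(f_s,x_s)$ one at a time, in order, inside the spaces $X_{r_s}$, choosing $r_s\in L_{s}\cap 2\N$ appropriately via $\tau$ so that $(f_1,\dots,f_{2N})$ is $N$-g-special and $\supp_h(x_s)=\supp_h(f_s)=\{r_s\}$ is a singleton with the $r_s$ increasing; the parameter $\w(f_1)$ can be taken smaller than $N^{-1}$ by going far enough out. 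The weight constraint $\w(f_{i+1})=m_k^{-1}$ for $k\in\tau(f_1,\dots,f_i)$ determines which $m$-index (hence which $j_s$) to use at the next step, and one invokes Lemma \ref{ts-exact} in the appropriate block subspace of $X_{r_{s+1}}$ to get the next exact pair with that prescribed weight. This is the standard "special sequence" bootstrap from Gowers-Maurey theory and should be routine here.

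Granting such pairs, part (1) of the theorem follows immediately. Fix $T=\sum_{s=1}^M a_s I_s$ and pick $S\in\mathscr{SS}(X)$; we must show $\|T\|\le 2^7\|T-S\|$, equivalently $\|T\|\le 2^7\dist(T,\mathscr{SS}(X))$. Choose $N\ge M$ large enough that an $N$-g-special pair with $\supp(x_1)>M_N$ exists, and let $x=\sum_{s=1}^M b_s(x_{2s-1}-x_{2s})$ be the unit vector supplied by Proposition \ref{crucial}(3), so $\|T\|\le 40\|Tx\|$. The vectors $x$ can be taken supported arbitrarily far out (by choosing the exact pairs far out), so along a sequence of such pairs with supports $\to\infty$, strict singularity of $S$ forces $\|Sx\|\to 0$; more precisely, for any $\varepsilon>0$ there is such an $x$ with $\|x\|=1$ and $\|Sx\|<\varepsilon$. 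Then $\|T\|\le 40\|Tx\|\le 40(\|(T-S)x\|+\|Sx\|)\le 40\|T-S\|+40\varepsilon$, and letting $\varepsilon\to 0$ gives $\|T\|\le 40\|T-S\|\le 2^7\|T-S\|$. Taking the infimum over $S$ yields (1).

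For part (2), I would combine (1) with Proposition \ref{crucial}. The lower bound "$(I_s)_s$ is $2^7$-dominated by $(e_s^*)_s\subset Y^*$" — i.e. $\|\sum_{s=1}^M a_s I_s\|\le 2^7\|\sum_{s=1}^M a_s e_s^*\|_{Y^*}$ — is the nontrivial direction; here I would use duality together with Proposition \ref{crucial}(2). For the reverse, "$(e_s^*)_s$ is $2^7$-dominated by $(I_s)_s$", note that for any $(a_s)_{s=1}^M$ one can find, by the $w^*$-density of $K$ and the structure of $K_Y$, a normalized $w\in Y$ and a functional realizing $\|\sum a_s e_s^*\|_{Y^*}$; transporting this to $X$ via the $\ell_1$-domination in Proposition \ref{crucial}(1) and the vector $x$ from (3) bounds $\|\sum a_s e_s^*\|_{Y^*}$ from above by a constant times $\|\sum a_s I_s\|$. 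Concretely, applying $\sum a_s I_s$ to $x=\sum b_s(x_{2s-1}-x_{2s})$ and testing against the g-special functional $\sum\sgn(a_sb_s)(f_{2s-1}+f_{2s})$ recovers $\sum|a_s b_s|$, while choosing $(b_s)$ to realize the $Y^*$-norm of $(a_s)$ on the $Y$-side (via Proposition \ref{crucial}(2), which says $(x_{2s-1}-x_{2s})_s$ is $20$-equivalent to the $Y$-basis up to constants) closes the estimate with room to spare inside the constant $2^7$. The hard part is bookkeeping the constants: one must check that $40$ from (3), the $3$ and $1$ from (1), and the $1$ and $20$ from (2) multiply out below $128=2^7$ in both directions — this is where I expect the main (though entirely mechanical) difficulty to lie, and it is presumably why the generous constant $2^7$ was chosen rather than something tight.
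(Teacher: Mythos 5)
Your overall route is the paper's: both parts are deduced from Proposition \ref{crucial} applied to $N$-g-special pairs, whose existence follows from Remark \ref{xs-remark} and Lemma \ref{ts-exact} inside the spaces $X_{r_s}$, much as you describe. Part (2) as you sketch it is essentially the paper's argument (which lands on the constants $120$ and $20$), with one imprecision: the direction $\|\sum_s a_sI_s\|\le C\|\sum_s a_se_s^*\|_{Y^*}$ does not follow from ``duality together with Proposition \ref{crucial}(2)'' alone; you need Proposition \ref{crucial}(3) to know that $\|T\|$ is, up to the factor $40$, attained on a vector $x=\sum_s b_s(x_{2s-1}-x_{2s})$ whose coefficients satisfy $\|(b_s)_s\|_Y\le 1$, after which $\|Tx\|=\|\sum_s a_sb_sx_{2s-1}\|\le 3\sum_s|a_sb_s|\le 3\|(a_s)_s\|_{Y^*}$ closes that direction.

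The genuine gap is in part (1), in the step ``along a sequence of such pairs with supports $\to\infty$, strict singularity of $S$ forces $\|Sx\|\to 0$.'' Strict singularity gives no such thing: it does not imply that $S$ is small on normalized vectors merely because they are supported far out (the formal inclusion $\ell_1\to\ell_2$ is strictly singular yet has norm one on every unit vector, and in general a strictly singular non-compact operator is bounded below on some normalized block sequence). Since the coefficients $b_s$ delivered by Proposition \ref{crucial}(3) depend on the pair and are only known to satisfy $|b_s|\le 1$, you must control each $\|Sx_s\|$ individually, and for that $S$ has to be interleaved into the construction of the pair. This is what the paper does: choose the exact pairs inductively, at step $s$ using the standard fact that the strictly singular operator $S$, restricted to the relevant block subspace of $X_{r_s}$, admits a further infinite-dimensional block subspace on which its norm is at most $\|T\|2^{-s-6}$, and then applying Lemma \ref{ts-exact} inside that subspace to get $(f_s,x_s)$ with $\|Sx_s\|\le\|T\|2^{-s-6}$. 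Then $\|Sx\|\le\sum_{s=1}^{2N}\|Sx_s\|\le\|T\|/2^6$ and $\|(T-S)x\|\ge\|T\|/40-\|T\|/2^6\ge\|T\|/2^7$, which is precisely where the constant $2^7$ comes from. With this repair your argument coincides with the paper's.
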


\begin{proof}

(1) Fix scalars $(a_s)_{s=1}^N$, let $T=\sum_{s=1}^Na_sI_s$ and fix $S\in\mathscr{SS}(X)$. 
By the strict singularity of $S$, Remark \ref{xs-remark} and Lemma \ref{ts-exact} pick inductively on $s=1,\dots,2N$ a g-special pair $((x_1,\dots,x_{2N}), (f_1,\dots,f_{2N}))$ with $\supp(x_1)>M_N$ and $\|Sx_s\|\leq \|T\|/2^{s+6}$ for each $s$. 
By Prop. \ref{crucial} (3) for some scalars $(b_s)_{s=1}^N\subset [-1,1]$  the vector $x=\sum_{s=1}^Nb_s(x_{2s-1}-x_{2s})$ has norm at most 1 and 
$\|T\|\leq 40\|Tx\|$. 
Moreover, by the choice of $(x_s)$ we have $\|Sx\|\leq \|T\|/2^6$, thus $\|T-S\|\geq \|(T-S)x\|\geq\|T\|/2^7$ and we finish the proof of (1).

(2) We shall prove that for any $(a_s)_{s=1}^N$ we have
$$
\frac{1}{120}\|\sum_{s=1}^Na_sI_s\|\leq \|\sum_{s=1}^Na_se^*_s\|_{Y^*}\leq 20\|\sum_{s=1}^Na_sI_s\|
$$

Let $T=\sum_{s=1}^Na_sI_s$.

In order to show the first inequality pick by Remark \ref{xs-remark} and Lemma \ref{ts-exact} any g-exact pair $((x_1,\dots,x_{2N}), (f_1,\dots, f_{2N}))$ with $\supp(x_1)>M_N$. By 
Prop. \ref{crucial} (3) and (2) for some scalars $(b_s)_{s=1}^N$ with $\|(b_s)_{s=1}^N\|_Y\leq 1$ the vector $x=\sum_{s=1}^Nb_s(x_{2s-1}-x_{2s})$ satisfies  $\|T\|\leq 40\|Tx\|$. Thus 
$$
\|T\|\leq 40\|Tx\|=40
\|\sum_{s=1}^Na_sb_sx_{2s-1}\|\leq  120\sum_{s=1}^N|a_sb_s|\leq 120\|\sum_{s=1}a_se^*_s\|_{Y^*}
$$
as $\|(b_s)_{s=1}^N\|_Y\leq 1$ and the basis $(e_n)_n\subset Y$ is 1-unconditional.

For the second inequality pick $y=(b_s)_{s=1}^N\in Y$ of norm 1 on which $\sum_{s=1}^Na_se_s^*$ attains its norm, by Remark \ref{xs-remark} and Lemma \ref{ts-exact} pick any g-special pair $((x_1, \dots, x_{2N}), (f_1,\dots,f_{2N}))$ with $\supp(x_1)>M_N$ and let $x=\sum_{s=1}^Nb_s(x_{2s-1}-x_{2s})$. By Prop. \ref{crucial} (2) $\|x\|\leq 20$, whereas by Prop. \ref{crucial} (1) we have
$$
\|\sum_{s=1}^Na_se^*_s\|_{Y^*}=\sum_{s=1}^Na_sb_s\leq \|\sum_{s=1}^Na_sb_sx_{2s-1}\|=\|Tx\|
$$
\end{proof}

As $dist(I_0,\overline{\spa }\{I_s: s=1,2,\dots\})=1$ and the space $Y$ is reflexive, we have the following

\begin{corollary}\label{main1}
The  sequence $(I_s)_{s=0}^\infty\subset\mathscr{L}(X)$ is an unconditional shrinking and boundedly complete  basic sequence.

\end{corollary}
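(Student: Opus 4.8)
The plan is to deduce Corollary \ref{main1} from Theorem \ref{main0} together with standard facts about $Y$ and the structure of $\mathscr{L}(X)$. First I would record that by Theorem \ref{main0}(2) the sequence $(I_s)_{s=1}^\infty$ is $2^7$-equivalent to the canonical basis $(e_s^*)_s$ of $Y^*$, and that $(e_s^*)_s$ is a $1$-unconditional, shrinking, boundedly complete basis of $Y^*$ since $Y=T[(A_{n_{2j}},m_{2j}^{-1})_{j\in L_0}]$ is a reflexive space with a $1$-unconditional basis (the basis of a reflexive space with unconditional basis is both shrinking and boundedly complete, and the same then holds for the dual basis of the reflexive dual $Y^*$). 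All three properties — unconditionality, shrinkingness, bounded completeness — are isomorphic invariants of a basic sequence, so they transfer to $(I_s)_{s=1}^\infty$ up to the equivalence constant.

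Next I would incorporate the zeroth term $I_0=\mathrm{Id}_X$. The claim is that prepending $I_0$ preserves all three properties; this is where the stated fact $\dist(I_0,\overline{\spa}\{I_s:s\geq 1\})=1>0$ enters. Writing $V_0=\overline{\spa}\{I_s:s\geq 1\}$, the distance being positive means $V=\overline{\spa}\{I_s:s\geq 0\}$ is the topological direct sum $\R I_0\oplus V_0$, so $(I_s)_{s=0}^\infty$ is a basic sequence whose tail $(I_s)_{s=1}^\infty$ spans a complemented hyperplane. Adding a single vector that generates a complemented one-dimensional summand preserves unconditionality of the basis (the projections onto finite subsets of $\{1,2,\dots\}$ are uniformly bounded, and the rank-one piece is trivially handled), and likewise preserves both shrinkingness and bounded completeness — these are precisely the properties that are stable under passing between a basic sequence and a finite-codimensional (resp. finite-dimensional perturbation) block, and here $\R I_0$ is a one-dimensional complemented piece, which affects neither the sup over tails of coefficient functionals (shrinkingness) nor the sup over partial sums (bounded completeness) beyond a fixed constant.

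The main obstacle is making precise the assertion that $(I_s)_{s=0}^\infty$ is actually a basic \emph{sequence} in $\mathscr{L}(X)$ with a well-defined coefficient structure, i.e. that the decomposition $V=\R I_0\oplus V_0$ together with the basis $(I_s)_{s\geq 1}$ of $V_0$ assembles into a genuine Schauder basis of $V$ with the correct ordering. This requires checking that the partial-sum projections $P_n:\sum_{s=0}^\infty a_sI_s\mapsto\sum_{s=0}^n a_sI_s$ are uniformly bounded on $V$; for $n\geq 1$ this follows from the uniform boundedness of partial sums for $(I_s)_{s\geq 1}$ (transferred from $Y^*$) composed with the bounded projection $V\to V_0$ along $\R I_0$, and for $n=0$ it is the bounded projection onto $\R I_0$. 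Once this is in place, shrinkingness of $(I_s)_{s=0}^\infty$ — equivalently, that $\overline{\spa}\{I_s^*:s\geq 0\}=V^*$ — and bounded completeness both reduce to the corresponding statements for $(e_s^*)_s\subset Y^*$ via the equivalence and the reflexivity of $Y$; indeed reflexivity of $Y$ gives reflexivity of $Y^*$, hence the canonical basis of $Y^*$ is simultaneously shrinking and boundedly complete (James' theorem), and these pass to $(I_s)_{s\geq 1}$ and then to $(I_s)_{s\geq 0}$ as above. I would close by noting this is exactly the content of Corollary \ref{main1}.
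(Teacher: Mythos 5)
Your proof is correct and follows essentially the same route as the paper, which deduces the corollary in one line from Theorem \ref{main0}(2), the reflexivity of $Y$ (hence of $Y^*$ and its unconditional dual basis), and the fact that $\dist(I_0,\overline{\spa}\{I_s:s\geq 1\})=1$. Your write-up simply spells out the standard transfer arguments that the paper leaves implicit.
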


\section{The  Gowers-Maurey type notions and properties in the space $ X$}

We repeat in this section  the standard scheme of Gowers-Maurey spaces theory as presented in \cite{atod} in the setting the horizontal structure of $X$, presenting in detail the proofs requiring some adjustment due to "antihorizontal" behaviour of g-special functionals and quoting the rest. The adjustments rely on Lemma \ref{key} and the following notion, essential in the proof of the Basic Inequality (Lemma \ref{basic-inequality}) for certain special h-block sequences.

\begin{definition}[g-separated sequences]
    An h-block sequence $(x_n)_{n=1}^N\subset X$ is called $(C,\epsilon)$-g-separated, $C>1$ and $\epsilon>0$, if there is some increasing sequence $(K_n)_{n=1}^N\subset\N$ such that 
    \begin{enumerate}
        \item $\|x_n\|\leq C$, for any $n=1,\dots,N$
        \item $f(x_n)=0$ for any $K$-g-special $f$ with $K\leq K_n$, for any $n=1,\dots,N$ 
        \item $|f(x_n)|<\epsilon$ for any $K$-g-special $f$ with $K\geq K_{n+1}$, for any $n=1,\dots,N$. 
    \end{enumerate}
\end{definition}

\begin{lemma}\label{existence-g-separated}
 For any $C>1$, $\epsilon>0$ and  $(C,\epsilon)$-g-separated $(x_n)_{n=1}^N\subset X$ there is $K_{N+1}\in\N$ so that for any $x\in X$ with $\|x\|\leq C$ and $\supp_h(x)>2K_{N+1}$ the sequence $(x_1, \dots, x_N, x)$ is $(C,\epsilon)$-g-separated. 
 
 In particular any bounded h-block sequence $(x_n)_n\subset X$ for any $\epsilon>0$ has a $(C,\epsilon)$-g-separated subsequence, where $C=\sup_n\|x_n\|$. 
\end{lemma}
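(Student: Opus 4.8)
\textbf{Plan for the proof of Lemma \ref{existence-g-separated}.}

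The key observation is that condition (1) of being $(C,\epsilon)$-g-separated is preserved under adding a vector of norm at most $C$, so the real work is in conditions (2) and (3). For condition (2) I want to find a threshold $K_{N+1}$ such that every $K$-g-special functional with $K \le K_{N+1}$ vanishes on the new vector $x$. A $K$-g-special functional $f = \sum_{i=1}^{K}\varepsilon_i(f_{2i-1}+f_{2i})$ has $\supp_h(f_s) = \{s\}$ for $s \le K$ and $\supp_h(f_s) = \{N+s'\}$-type constraints, so $\supp_h(f) \subset [1, 2K]$. Hence if $\supp_h(x) > 2K$ then $f(x) = 0$ automatically. Since there are only finitely many values $K \le K_{N+1}$ to worry about, and for each such $K$ the horizontal support of any $K$-g-special functional lies in $[1,2K] \subset [1, 2K_{N+1}]$, the requirement $\supp_h(x) > 2K_{N+1}$ forces $f(x)=0$ for all such $f$; this is exactly why the statement demands $\supp_h(x) > 2K_{N+1}$.

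For condition (3), I need that $K_{N+1}$ is large enough that the \emph{existing} vectors $x_1,\dots,x_N$ still satisfy $|f(x_n)| < \epsilon$ for all $K$-g-special $f$ with $K \ge K_{N+1}$; but this is immediate since $(x_n)_{n=1}^N$ was already $(C,\epsilon)$-g-separated with witnessing sequence $(K_n)_{n=1}^N$, and any $K_{N+1} \ge K_N$ works for the old vectors. For the new vector $x$ itself we need $|f(x)| < \epsilon$ whenever $f$ is $K$-g-special with $K \ge K_{N+1}$; but actually, by the support computation above, once $\supp_h(x) > 2K_{N+1}$ the new vector is killed by \emph{every} $K$-g-special functional with $2K \le \supp_h(x)$, and in particular it suffices to note that whenever $f$ is $K$-g-special with $K$ in the relevant range we either get $f(x) = 0$ or we need a crude bound $|f(x)| \le \|x\|_{h,\infty} \cdot \#\supp_h(f \cap \supp_h(x))$ — here one uses that $g$-special functionals act on a vector of narrow horizontal support through at most a bounded number of their pieces; choosing $K_{N+1}$ large makes the weight of those pieces small. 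I would package this by first fixing $K_{N+1} > \max\{K_N, \dots\}$ large enough (using finiteness and the growth of the weights $m_j^{-1}$) and then verifying all three conditions for $(x_1,\dots,x_N,x)$ with the extended sequence $(K_1,\dots,K_N,K_{N+1})$.

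For the ``in particular'' clause: given a bounded h-block sequence $(x_n)_n$ with $C = \sup_n\|x_n\|$ and $\epsilon > 0$, I build a subsequence inductively. Start with any single $x_{n_1}$ (trivially $(C,\epsilon)$-g-separated with $K_1 = 1$, say, and using that $x_{n_1}$ has bounded horizontal support one adjusts $K_1$ so (2) holds vacuously). Having chosen $x_{n_1},\dots,x_{n_k}$ forming a $(C,\epsilon)$-g-separated sequence with witnesses $K_1 < \dots < K_k$, apply the first part to get $K_{k+1}$, then use that $(x_n)_n$ is h-block — so $\supp_h(x_n) \to \infty$ — to pick $n_{k+1} > n_k$ with $\supp_h(x_{n_{k+1}}) > 2K_{k+1}$; the first part then guarantees $(x_{n_1},\dots,x_{n_{k+1}})$ is $(C,\epsilon)$-g-separated. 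The resulting subsequence $(x_{n_k})_k$ is $(C,\epsilon)$-g-separated. The main obstacle is the bookkeeping in condition (3) for the new vector: one must confirm that a $g$-special functional of large ``length'' $K$, when restricted to a vector of horizontal support a single block far out, either annihilates it or hits it only through pieces of small weight — this rests on the structural fact that in an $N$-g-special sequence $\w(f_{i+1}) = m_k^{-1}$ for $k \in \tau(f_1,\dots,f_i)$ with $\w(f_1) < N^{-1}$, so the weights decay and for $K$ large the pieces with index in any fixed small window have weight $\le 1/K < \epsilon$.
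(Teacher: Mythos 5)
There is a genuine gap, and it sits exactly at the one non-trivial step of the lemma. Your treatment of condition (2) for the new vector $x$ is correct and is the same as the paper's: a $K$-g-special functional has $\supp_h(f)\subset[1,2K]$, so $\supp_h(x)>2K_{N+1}\geq 2K$ forces $f(x)=0$. But your treatment of condition (3) misplaces the burden. In the definition the witness $(K_n)_{n=1}^N$ has only $N$ terms, so the hypothesis on $(x_n)_{n=1}^N$ gives the bound $|f(x_n)|<\epsilon$ only for $n\leq N-1$ (those $n$ for which $K_{n+1}$ exists). When you extend the witness by a new $K_{N+1}$, condition (3) for $x_N$ — namely $|f(x_N)|<\epsilon$ for every $K$-g-special $f$ with $K\geq K_{N+1}$ — becomes a fresh requirement that is \emph{not} "immediate" and is \emph{not} satisfied by "any $K_{N+1}\geq K_N$": it is precisely the constraint that dictates how large $K_{N+1}$ must be. The paper chooses $K_{N+1}>K_N$ with $C\max\supp(x_N)/K_{N+1}<\epsilon$ and then estimates $|f(x_N)|\leq C\max\supp(x_N)\|f\|_\infty\leq C\max\supp(x_N)/K_{N+1}<\epsilon$, using that a $K$-g-special functional has $\|f\|_\infty\leq 1/K$. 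Your proposal never produces this quantitative choice of $K_{N+1}$ in terms of $x_N$.

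Instead, you spend the effort on an $\epsilon$-bound for the \emph{new} vector $x$ against $K$-g-special functionals with $K\geq K_{N+1}$. For the extended sequence this is either vacuous (it would refer to a threshold $K_{N+2}$ that is only introduced at the next extension step) or, if you insist on proving it now, your sketched argument does not go through as stated: the claim that "the weights decay and for $K$ large the pieces with index in any fixed small window have weight $\le 1/K$" is not supported by the definition of $\tau$, which, unlike $\sigma_s$, carries no growth condition tying $\tau(f_1,\dots,f_i)$ to $\supp(f_i)$; the only smallness the definition hands you is $\w(f_1)<N^{-1}$, and the pieces of $f$ meeting $\supp_h(x)$ lie far out, not in a fixed small window. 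The "in particular" clause of your proposal (inductive extraction of a subsequence using $\supp_h(x_n)\to\infty$) is fine and matches the paper's intent. To repair the main part, delete the discussion of the new vector's $\epsilon$-bound and replace the "immediate" claim for the old vectors by the explicit choice $K_{N+1}>\max\{K_N,\,C\max\supp(x_N)/\epsilon\}$ together with the elementary estimate $|f(x_N)|\leq\|f\|_\infty\,\#\supp(x_N)\,\|x_N\|_\infty\leq C\max\supp(x_N)/K$.
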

\begin{proof} Consider $(x_n)_{n=1}^N$ as above, with associated $(K_n)_{n=1}^N$. Pick
$K_{N+1}>K_N$ with \\
$C\max\supp(x_N)/K_{N+1}<\epsilon$. Notice that for any $K$-g-special $f$ with $K\geq K_{N+1}$ we have  $|f(x_N)|\leq C\max\supp(x_N)\|f\|_\infty\leq C\max\supp(x_N)/K_{N+1}<\epsilon$ and we obtain (3) for $x_N$. Pick $x\in X$ with $\supp_h(x)>2K_{N+1}$ and notice that for any $K$-g-special $f$ with  $K\leq K_{N+1}$ we have $\supp_h(f)\leq 2K_{N+1}<\supp_h(x)$ and hence (2) is satisfied for $x$, which ends the proof. 
\end{proof}

\begin{definition}[$\ell_1$-averages]
    A normalized vector $x\in X$ is called a $C$-$\ell_1$-average of length $N\in\N$, $C>1$, if there is an h-block sequence $(x_n)_{n=1}^N\subset X$ with $x=N^{-1}(x_1+\dots+x_N)$ and $\|x_n\|\leq C$ for each $n=1,\dots,N$.  
\end{definition}
If $C=2$ we will omit it in the notation and say  that $x$ is an $\ell_1$-average of suitable length. 

\begin{lemma}\label{ell-averages-existence}
    Any h-block subspace of $X$ contains an $\ell_1$-average of arbitrary length. 
\end{lemma}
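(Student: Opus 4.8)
The statement to prove is Lemma \ref{ell-averages-existence}: every h-block subspace of $X$ contains a $2$-$\ell_1$-average of arbitrary length. The plan is to adapt the classical Gowers--Maurey argument (as in \cite{atod}) to the horizontal setting of $X$. Fix an infinite h-block sequence spanning the given h-block subspace $Z$ and fix the desired length; the point is that if $Z$ contained \emph{no} $2$-$\ell_1$-average of some length $N$, then one derives a contradiction with the fact that the relevant weighted functionals (coming from $2j\in L_0$ with $n_{2j}$ large, i.e. the h-regular functionals built in clause (4) of the definition of $K$) are available to norm averages from below.

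\textbf{Key steps.} First I would recall the standard dichotomy: either for every $\ell \in \N$ the subspace $Z$ contains a $2$-$\ell_1$-average of length $\ell$, in which case we are done, or there is a smallest $\ell_0 \ge 2$ such that $Z$ contains no $2$-$\ell_1$-average of length $\ell_0$ but does contain one of length $\ell_0 - 1$ (the base case $\ell_0=1$ being trivial since any normalized vector is a $2$-$\ell_1$-average of length $1$). In the second case, iterating within $Z$ one produces, for arbitrarily large $m$, normalized h-block vectors that are (roughly) averages of $m$ normalized $2$-$\ell_1$-averages of length $\ell_0-1$; since length-$\ell_0$ averages are forbidden, each such long average must have small norm, and combining many of them one again builds a long average of small norm — the standard ``averaging amplifies'' contradiction, which forces the supremum defining the norm of a genuinely long average to exceed any fixed bound. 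The crucial input making this work in $X$ is that for $2j \in L_0$ with $n_{2j}$ as large as we like (such $j$ exist by property $(**)$), any h-block sequence $g_1 < \dots < g_d$ in $K$ with $d \le n_{2j}$ yields $m_{2j}^{-1}(g_1 + \dots + g_d) \in K$ by clause (4); applying this to functionals nearly norming each block of an h-block average shows that a long h-block average of length a multiple of $n_{2j}$ has norm close to $m_{2j}^{-1}\cdot(\text{length of inner averages})/$something, which for the right choice of parameters is bounded below away from the ``small norm'' we assumed — contradiction. Since the construction throughout uses only h-block vectors inside $Z$, the resulting average lies in $Z$.

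\textbf{Main obstacle.} The essential difference from the classical setting is that h-block vectors are \emph{not} supported in a single $X_s$, so when we apply an h-regular functional $m_{2j}^{-1}(g_1+\dots+g_d)$ to an h-block average we must make sure the $g_i$ can genuinely be taken h-block with respect to the h-supports of the average's pieces; this is fine because an h-block sequence by definition has its $(\supp_h(x_n))_n$ block in $\N$, and $K$ is closed under restriction to intervals, so we can trim functionals to match h-supports. The bookkeeping of constants (keeping the constant $C=2$ rather than some larger $C$) is the same as in \cite{atod} and I would simply invoke the standard estimates; the only genuinely new point requiring care — and the one I expect to be the real obstacle — is verifying that the norming functionals produced by the iteration are legitimately in $K$, i.e.\ that each requested h-block family of functionals satisfies the growth/cardinality constraint $d \le n_{2j}$ and is h-block, so that clause (4) applies. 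Once that is in place the contradiction is obtained exactly as in the classical proof, and no appeal to g-special functionals or to Lemma \ref{key} is needed at this stage.
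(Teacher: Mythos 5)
Your overall plan (the classical Gowers--Maurey averaging dichotomy, with the lower bound supplied by h-regular functionals from clause (4) and property $(**)$) is the same as the paper's, which follows \cite[Lemma 1]{gm}. But there is a genuine gap exactly at the point you dismiss: you assert that the needed h-block family of norming functionals can be obtained by trimming arbitrary norming functionals to intervals, and that ``no appeal to g-special functionals or to Lemma \ref{key} is needed at this stage.'' This is false. The sets $J_s$ are interleaved infinite subsets of $\N$, so if $x_n$ is a normalized h-block vector and $g\in K$ norms it, the restriction $g|_{\rng(x_n)}$ (the only kind of restriction under which $K$ is closed) has h-support contained in $\{s: J_s\cap\rng(x_n)\neq\emptyset\}$, which is typically much larger than $\supp_h(x_n)$ and in particular not controlled from above; hence the trimmed functionals need not form an h-block sequence, and clause (4) cannot be applied to them. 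Restricting instead to $J_{\supp_h(x_n)}$ or to $\supp(x_n)$ would fix the h-supports, but these are not intervals and $K$ is not closed under such restrictions --- this is precisely the manifestation of the fact that $(X_s)_s$ is not a Schauder decomposition of $X$.

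The paper's proof therefore begins by invoking Lemma \ref{key}(1) to extract, after passing to a subsequence, an h-block sequence $(f_n)_n\subset K$ with $f_n(x_n)>\delta$ and $f_n(x_m)=0$ for $n\neq m$; only then is the functional $m_{2j}^{-1}(f_1+\dots+f_{n_{2j}})\in K$ (with $2j\in L_0$, $n_{2j}=2^{km}$) legitimate and the lower bound $\|x(m,1)\|\geq n_{2j}m_{2j}^{-1}\delta$ available to contradict the upper bound $2^{-m}2^{km}$ from the dyadic induction. Lemma \ref{key} is in turn proved via the decomposition of Lemma \ref{key-aux}, which is where the g-special functionals must be confronted. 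So your argument is repairable, but only by inserting the appeal to Lemma \ref{key}(1) that you explicitly claim is unnecessary; as written, the norming functionals you construct are not known to be in $K$ in the required h-block configuration. The remaining bookkeeping (the choice of $m$ with $m_{2j}<\delta 2^m$, and the passage from averages of length $2^k$ to averages of arbitrary length by a counting argument over subsets of size $l$) matches the paper.
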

\begin{proof}
We follow here the proof of \cite[Lemma 1]{gm}. Fix $k\in\N$ and  a normalized h-block sequence $(x_n)_n\subset X$. Passing to a subsequence if necessary by Lemma \ref{key} (1) we can assume that there is an h-block sequence $(f_n)_n\subset K$ and $\delta>0$ such that $f_n(x_n)>\delta$ and $f_n(x_m)=0$ for any $n\neq m$. 
By property $(**)$ of sequences $(n_j)_j$, $(m_j)_j$ (cf. Notation \ref{properties-integers}) pick $j\in\N$ so that $n_{2j}=2^{km}$, for $m$ big enough to ensure $m_{2j}<\delta\sqrt[k]{n_{2j}}=\delta2^m$ (the latter is possible by $(*)$, as $m_j/\sqrt[k]{n_j}\to 0$). For every $0\leq i\leq m$ and $1\leq j\leq 2^{k(m-i)}$ let $x(i,j)=\sum_{n=(j-1)2^{ki}+1}^{j2^{ki}}x_n$. Assume towards contradiction that for $1\leq i\leq m$ and $1\leq j\leq 2^{k(m-i)}$,  $2^{-ki}2^{-(i-1)}x(i,j)$ is not an $\ell_1$-average of length $2^k$. Notice that for any $0\leq i< m$,  $x(i+1,j)$ is a sum of $2^k$ many terms of the form $x(i,l)$. It follows by induction that $\|x(i,j)\|\leq 2^{-i}2^{ki}$, in particular $\|x(m,1)\|\leq 2^{-m}2^{km}$. On the other hand 
$$
\|x(m,1)\|=\|\sum_{n=1}^{2^{km}}x_n\|\geq m_{2j}^{-1}\sum_{n=1}^{2^{km}}f_n(x_n)\geq n_{2j}m_{2j}^{-1}\delta=2^{km}m_{2j}^{-1}\delta 
$$ 
which contradicts the choice of $m$. Therefore we proved that there exist $\ell_1$-averages of length $2^k$ for any $k\in\N$ in the subspace spanned by $(x_n)_n$, thus also $\ell_1$-averages of arbitrary length. Indeed, for $l\in\N$ pick $k\in\N$ with $l\leq 2^k$ and an $\ell_1$-average $x=2^{-k}\sum_{i=1}^{2^k}y_i$. Let $\mathcal{F}=\{F\subset\{1,\dots,2^k\}, \#F=l\}$ and write
$$
x=l2^{-k}\binom{2^k-1}{l-1}^{-1}\sum_{F\in\mathcal{F}}y_F
$$
where $y_F=l^{-1}\sum_{i\in F}y_i$ for each $F\subset\{1,\dots,2^k\}$. Thus $x=(\# \mathcal{F})^{-1}\sum_{F\in\mathcal{F}}y_F$ and so there is at least one $F\in\mathcal{F}$ with $\|y_F\|=1$, i.e. $y_F$ is an $\ell_1$-average of length $l$. 
\end{proof}

The next Lemma is standard, cf. \cite[Lemma II.23]{atod}
\begin{lemma}\label{av-est} Let $x\in X$ be an $\ell_1$-average of length $N$. Then for any intervals $E_1<\dots<E_d$ with $d\leq N$ we have
$ \sum_{i=1}^d\|I_{E_i}x\|\leq 6$. 
\end{lemma}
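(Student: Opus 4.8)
\textbf{Proof plan for Lemma \ref{av-est}.}

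The plan is to transfer the corresponding estimate from the setting of the Gowers--Maurey subspaces $X_s$ and handle the interaction with the horizontal structure, in the spirit of \cite[Lemma II.23]{atod}. Write $x = N^{-1}(x_1 + \dots + x_N)$ with $(x_n)_{n=1}^N$ an h-block sequence and $\|x_n\| \le 2$. Fix intervals $E_1 < \dots < E_d$ with $d \le N$. For each $i$, choose $g_i \in K$ with $\|I_{E_i}x\| = g_i(I_{E_i}x) = (E_i g_i)(x)$; since $\|\cdot\|_{h,\infty} \le \|\cdot\|$, we may after a harmless normalization think of each $E_i g_i$ as acting on $x$. First I would split the index set $\{1,\dots,N\}$ of the averaging blocks: call $x_n$ \emph{split by $E_i$} if $\rng(x_n)$ meets both $E_i$ and its complement at the relevant endpoints. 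Because the $E_i$ are consecutive intervals, each $x_n$ is split by at most two of the $E_i$, so the total contribution of split summands across all $i$ is at most $2 \cdot \max_n \|x_n\| / N \le 4/N \cdot N = 4$ after summing — more precisely, $\sum_{i=1}^d \|(E_i \text{ restricted to split } x_n) \, x\| \le N^{-1} \cdot 2 \cdot 2 \max_n\|x_n\| \le \ldots$, giving a universal constant; this is the routine endpoint bookkeeping.

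The main step is to bound the contribution of the summands $x_n$ that lie \emph{entirely inside} a single $E_i$. Group the blocks: for each $i$ let $A_i = \{n : \rng(x_n) \subset E_i\}$; these sets are pairwise disjoint subsets of $\{1,\dots,N\}$, and $\sum_i |A_i| \le N$. For a fixed $i$, I would estimate $\|I_{E_i}x\|$ contributed by $\sum_{n \in A_i} x_n$ by applying a functional $h_i \in K$ and running through an h-tree-analysis of $h_i$. The key observation — exactly as in the classical argument — is that a single weighted (or h-weighted) node of weight $m_j^{-1}$ acting on $N^{-1}\sum_{n \in A_i} x_n$ contributes at most $m_j^{-1} \cdot n_j \cdot \max\|x_n\| / N$ from its own level plus a lower-order tail, and since we are averaging over $N \ge d \ge$ (number of relevant pieces), the geometric decay $m_j^{-1}n_j/N$ together with $m_1 = 2$ sums to a universal constant. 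One has to be slightly careful that g-special nodes appearing in the tree-analysis of $h_i$ do not spoil this: but a g-special functional is a sum of simple functionals supported on distinct $J_s$'s, so $(E_i g)(x) = (E_i g)(I_{E_i}x)$ only picks up the few coordinates $s \in \supp_h$ of the relevant part, and on each such coordinate it acts with norm $\le \max\|x_n\|$; since the $x_n$ for $n \in A_i$ are h-block, only boundedly many contribute. Summing over $i$ and using $\sum_i |A_i| \le N$ gives the bound.

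The hard part will be organizing the tree-analysis estimate so that the g-special nodes and the horizontal weighted nodes are both controlled \emph{simultaneously} while summing over the $d$ intervals $E_i$ — i.e. making sure one does not lose a factor of $d$ anywhere. The device for this is precisely that the $x_n$ form an h-block sequence, so that any single functional evaluated on $\sum_n x_n$ "sees" the $x_n$ in a disjoint, ordered fashion, and the averaging constant $N^{-1}$ is paired against the branching number $n_j \le N$ at the relevant level. I would therefore carry out the induction on the h-tree-analysis of each $h_i$ tracking the quantity $\|(E_i f_t)(x)\|$ against $N^{-1} \cdot \#\{n : \supp(x_n) \text{ meets } \rng(E_i f_t)\}$, showing the ratio stays $O(1)$; then $\sum_{i=1}^d \|I_{E_i}x\| \le N^{-1} \cdot O(1) \cdot \sum_i \#\{n : \dots\} \le N^{-1} \cdot O(1) \cdot (N + d) = O(1)$, and a careful accounting of the constants yields the stated bound $6$.
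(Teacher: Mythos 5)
The paper gives no proof of this lemma at all: it cites \cite[Lemma II.23]{atod}, whose argument is a two-line counting estimate. Write $x=N^{-1}(x_1+\dots+x_N)$ with $\|x_n\|\le 2$. For each $i$ let $A_i=\{n:\rng(x_n)\subset E_i\}$ and let $B_i$ be the set of $n$ with $\rng(x_n)$ meeting $E_i$ but not contained in it; since $(x_n)$ is a block sequence, $\#B_i\le 2$ \emph{for each fixed $i$}. Then $\|I_{E_i}x\|\le N^{-1}\bigl(\sum_{n\in A_i}\|x_n\|+\sum_{n\in B_i}\|I_{E_i}x_n\|\bigr)\le N^{-1}(2\#A_i+4)$, the sets $A_i$ are pairwise disjoint so $\sum_i\#A_i\le N$, and summing over $i\le d\le N$ gives $2+4d/N\le 6$. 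That is the whole proof, and it is where the constant $6$ comes from.

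Your proposal contains this idea in its first paragraph, but two things go wrong. First, your counting statement is backwards: it is false that ``each $x_n$ is split by at most two of the $E_i$'' (a single $x_n$ whose range straddles many consecutive $E_i$ is, in your sense, split by all of them); the correct and needed statement is that each $E_i$ splits at most two of the $x_n$, so the number of split pairs $(i,n)$ is at most $2d$. Second, and more seriously, you misidentify the ``main step.'' The contribution of the summands lying entirely inside a single $E_i$ requires nothing beyond the triangle inequality and disjointness of the $A_i$; your proposed h-tree-analysis induction for this part is both unnecessary and unsound as described: the quantity $m_j^{-1}n_j/N$ does not exhibit ``geometric decay that sums to a universal constant'' --- under condition $(*)$ one has $n_{j+1}\ge(10n_j)^{m_{j+1}^2}$, so $n_j/m_j\to\infty$ extremely fast, and lower-level weighted nodes give no useful upper bound here. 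The only genuinely delicate point in this lemma (which neither you nor the paper addresses explicitly) is that if $I_{E_i}$ is read as the horizontal projection $\sum_{s\in E_i}I_s$, the two endpoint terms $\|I_{E_i}x_n\|$, $n\in B_i$, are not controlled by $\|x_n\|$ via bimonotonicity, since horizontal interval projections are not uniformly bounded on $X$ (cf.\ Remark \ref{obstacle}); in the lemma's only application (Remark \ref{ris-existence}) one in fact only needs $\sum_r|g_r(x)|\le 6$ for an h-block family $(g_r)_{r\le d}\subset K$, where $|g_r(x_n)|\le\|x_n\|\le 2$ is automatic and the same counting closes the argument.
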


We modify below the classical definition of RIS by adding the assumption on g-separation in (4).
\begin{definition}[Rapidly Increasing Sequences] An h-block sequence $(x_k)\subset
X$ is called a $(C,\epsilon)$- {rapidly increasing sequence} (RIS), for $C>1$, $\epsilon>0$, if there is a strictly increasing
sequence $(j_k)_k\subset L_0$ such that 

\begin{enumerate}

\item $\|x_k\|\leq C$, for any $k$,

\item $2m_{j_1}^{-1}<\epsilon$ and $2m_{j_{k+1}}^{-1}\max\supp_h(x_k)<\epsilon$ for any $k$,

\item $| f(x_k)|\leq C\w_h(f)$ for any h-weighted $f\in K$ with $\w_h(f)> m_{j_k}^{-1}$ for any $k$,

\item $(x_k)_k$ is $(C,\epsilon)$-g-separated.
\end{enumerate}

\end{definition}

\begin{remark}\label{ris-existence} Let $(x_k)$ be an h-block sequence of $\ell_1$-averages
of strictly increasing lengths $(n_{j_k})\subset\N$. 
Then by  Lemmas \ref{existence-g-separated} and \ref{av-est} there is infinite $J\subset\N$ so that $(x_k)_{k\in J}$ is a $(6,\epsilon)$-RIS. 
\end{remark}

The next Lemma is the key tool, classical by now, for estimating the norms of averages of RIS, reducing it to computing norms of averages of the unit basic vectors in the auxiliary mixed Tsirelson
space $W=T[(\mathcal{A}_{2n_j},\frac{1}{m_j})_{j\in L_0}]$ with the canonical norming set $K_{W}$.

\begin{lemma}[Basic Inequality] \label{basic-inequality} Let $(x_k)_{k=1}^N\subset X$ be a $(C,\epsilon)$-RIS for some $C>1$ and $\epsilon>0$.

Then for any norming functional $f\in K$ there is a norming functional $g\in K_{W}$ satisfying
$$
| f(\sum_{k=1}^Nx_k)|\leq Cg(\sum_{k=1}^Ne_k)+C\epsilon N
$$
and such that if $f$ is h-weighted, then either $g$ is also weighted with $\w(g)=\w_h(f)$ or $g$ is some unit vector $e_t^*$.

Moreover, if additionally for a fixed $j_0\in\N$ and for any h-weighted $h\in K$ with $\w_h(h)=m_{j_0}^{-1}$
and any interval $E$ we have $| h(\sum_{k\in E}x_k)|\leq C$, then the functional
$g$ can be chosen to have a tree-analysis $(g_t)_{t\in \mathcal{T}}$ with $\w(g_t)\neq m_{j_0}$
for any $t\in \mathcal{T}$ with $g_t$ weighted. \end{lemma}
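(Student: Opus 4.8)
The plan is to adapt the standard Gowers--Maurey Basic Inequality argument (as in \cite[Lemma II.24]{atod}) to our horizontal setting, using an h-tree-analysis of $f$ rather than an ordinary tree-analysis, and handling the two extra complications: subconvex combinations are already allowed in $K_W$, but the g-special nodes are genuinely new and must be absorbed. I would prove the statement by constructing, for a given $f\in K$ with h-tree-analysis $(f_t)_{t\in\mt}$, a family $(g_t)_{t\in\mt}\subset K_W$ by downward induction on $\mt$ (from terminal nodes to the root), maintaining the invariant that for the ``interval of RIS vectors whose range meets $\rng(f_t)$'', say $x_{p_t},\dots,x_{q_t}$, one has $|f_t(\sum_{k=p_t}^{q_t}x_k)|\le C g_t(\sum_{k=p_t}^{q_t}e_k)+C\epsilon(q_t-p_t+1)$, with the additional bookkeeping that $g_t$ is weighted with the same weight as $f_t$ when $f_t$ is h-weighted, or is a single $e_t^*$. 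The reduction to $W=T[(\mathcal{A}_{2n_j},m_j^{-1})_{j\in L_0}]$ rather than to a copy of $X$ itself is exactly what buys room for the factor-of-$2$ blow-up in the branching that the g-special nodes will cause.

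The heart of the matter is the case analysis at a non-terminal node $t$. For a subconvex-combination node, $g_t$ is defined as the corresponding subconvex combination of the $g_r$, and the invariant passes through by convexity after noting ranges overlap in at most the standard way. For an h-weighted node $f_t=\pm\w_h(f_t)E\sum_{r\in\suc(t)}f_r$ with $\w_h(f_t)=m_j^{-1}$: here one uses RIS condition (3) to control the (at most one) RIS vector $x_k$ whose h-support straddles the ``first'' successor, RIS condition (2) to bound the tail contribution $2m_j^{-1}\max\supp_h(x_k)<\epsilon$, and then the number of remaining successors is $\le n_{2j}\le 2n_{2j}$, so $g_t=\pm m_j^{-1}(\text{sum of }g_r$'s$)$ lands in $K_W$; this is where the doubled branching index $\mathcal{A}_{2n_j}$ is used, and where the ``$g$ weighted with $\w(g)=\w_h(f)$'' conclusion is recorded. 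For a \emph{g-special} node $f_t=E\sum_{i}\varepsilon_i(f_{2i-1}+f_{2i})$ with all $f_s$ simple: the crucial observation (via Lemma~\ref{key-aux}, or directly from the structure of g-special functionals) is that each simple $f_s$ meets at most one $X_r$, and by g-separation — RIS condition (4) — each RIS vector $x_k$ is annihilated by all $K$-g-special functionals with index $\le K_k$ and has $|f_t(x_k)|<\epsilon$ for those with index $\ge K_{k+1}$, so a g-special $f_t$ contributes at most one ``large'' term $|f_s(x_k)|\le C$ and an $\epsilon$-error per RIS vector; hence $f_t$ can be majorized on $\sum x_k$ by a single unit vector $e_t^*\in K_W$ plus $C\epsilon(\#)$, which is consistent with the ``$g$ is some $e_t^*$'' alternative. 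Summing the errors over the branches of $\mt$ gives the global $C\epsilon N$ term.

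For the ``Moreover'' part, one runs the same induction but at every h-weighted node of weight exactly $m_{j_0}^{-1}$ one does \emph{not} form the weighted functional $\pm m_{j_0}^{-1}\sum g_r$; instead, the hypothesis $|h(\sum_{k\in E}x_k)|\le C$ for every such $h$ and every interval $E$ lets us replace $f_t(\sum_{k\in E_t}x_k)$ directly by the bound $C$, which we realize in $K_W$ by a single unit vector $e_t^*$ (absorbing the weight-$m_{j_0}$ node entirely), so that no node of the resulting tree-analysis of $g$ is weighted with weight $m_{j_0}$. One must check this substitution is compatible with the parent node's construction — it is, since $e_t^*\in K_W$ and a parent weighted/convex/special node treats $e_t^*$ like any other successor functional, and the range/support bookkeeping is unaffected.

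The main obstacle I expect is the g-special case: unlike in the classical proof, a g-special functional is genuinely ``antihorizontal'' and is not built by a weight operation, so one cannot just push the induction through the $\mathcal{A}_{n_j}$-branching. The key is to exploit g-separation (RIS condition (4)) together with the fact that the vector parts $x_k$ have h-supports that are intervals growing past the $K_k$'s, to argue that a single g-special node, evaluated on a run of RIS vectors, behaves — up to an $\epsilon$-error per vector — like a single coordinate functional; getting the constants and the ``at most one large term per $x_k$'' count exactly right (and verifying it interacts correctly with the $E$-restriction and with $f_t$ appearing as a successor inside a larger h-weighted or convex node) is the delicate bookkeeping. Everything else is the routine Gowers--Maurey induction, which I would only sketch, citing \cite{atod}.
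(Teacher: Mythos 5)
Your proposal follows essentially the same route as the paper's proof: an induction from the terminal nodes of a tree-analysis maintaining the invariant $|f_t(\sum_{k\in E}x_k)|\le Cg_t(\sum_{k\in E}e_k)+C\epsilon\#E$, with the routine Gowers--Maurey cases (subconvex combinations; h-weighted nodes split via RIS conditions (2) and (3), the straddling vectors absorbed by the doubled branching $\mathcal{A}_{2n_j}$ of $W$), the new g-special case handled exactly as in the paper by g-separation (zero action for large indices, $\epsilon$-action for small ones, one large term majorized by a unit vector), and the ``Moreover'' part obtained by collapsing each weight-$m_{j_0}^{-1}$ node to a single $e^*_{\min E}$ via the extra hypothesis. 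The only slip is cosmetic: at an h-weighted node there can be up to $\#\suc(t)$ straddling RIS vectors (one per gap between successors), not just one, which is precisely why the branching must double --- a point your sketch implicitly uses correctly.
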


\begin{proof} Let $(m_{j_k})_k$ be the sequence of weights associated to the RIS $(x_k)_k$. We repeat the standard reasoning, with the only difference in CASE 5 below.

Take $f\in K$ with a tree-analysis $(f_t)_{t\in \mathcal{T}}$. We
will prove the Lemma under the additional assumption for some
fixed $j_0$. The proof in general case follows along the same
lines, just without referring to the case of weight $m_{j_0}$.
We prove by induction, starting from the terminal nodes, that for
any $t\in \mathcal{T}$ we have the following: for any interval $E\subset\N$ there is functional $g_t\in K_{W}$ with $\supp g_t\subset E$  such that
$$
| f_t(\sum_{k\in E}x_k)|\leq Cg_t(\sum_{k\in E}e_k)+C\epsilon\# E
$$
and if $f_t$ is h-weighted, then either $g_t$ is weighted with $\w(g_t)=\w_h(f_t)$ or $g_t$ is a unit vector $e_{n_t}^*$. Moreover, if $g_t$ is weighted, then $\w(g)\neq m_{j_0}^{-1}$.

If $t\in\mathcal{T}$ be terminal, then $f_t$ is simple with $\supp_h(f_t)=\{s_t\}$ for some
$s_t\in\N$, thus letting $g_t=e_{\min E}^*$ we finish the case. 

Take a non-terminal node $t\in \mathcal{T}$ and
interval $E$ and assume we proved the assertion for all its successors. Consider the following cases.

CASE 1. $f_t$ is h-weighted with $\w_h(f_t)=m_{j_0}^{-1}$, then by 
the additional assumption
$$
| f_t(\sum_{k\in E}x_k)|\leq C\leq Ce^*_{\min
E}(\sum_{k\in E}e_k)
$$
and we let $g_t=e^*_{\min E}$.

CASE 2. $f_t$ is h-weighted with $\w_h(f_t)\neq m_{j_0}^{-1}$ and there is
some $k\in E$ such that $m_{j_k}\leq \w_h(f_t)^{-1}$. Let $k_t$ be maximal
in $E$ with this property. By the condition (2) of RIS we have
$$
| f_t(\sum_{k\in E, k<k_t}x_k)|\leq \w_h(f_t)\max
\supp_h( x_{k_t-1}) \leq\frac{\epsilon}{2}
$$
and by conditions (2) and (3) of RIS
$$
| f_t(\sum_{k\in E, k> k_t}x_k)|\leq
C\w_h(f_t)\# E\leq \frac{C\epsilon}{2}\# E
$$
Hence for $g_t=e^*_{k_t}$ we have the desired estimate.

CASE 3. $f_t$ is h-weighted with $\w_h(f_t)\neq m_{j_0}^{-1}$ and for all $k\in E$ we have $m_{j_k}>\w_h(f_t)^{-1}$. Let
$$
E_0=\{k\in E: \ \rng_h( x_k) \text{ intersects horizontal ranges of at least two different }f_s, s\in \suc( t) \}
$$
$$
E_s=\{k\in E\setminus E_0: \ \rng_h (x_k) \text{ intersects }\rng_h(f_s)\}, \ \ \  \ s\in \suc( t)
$$
Since each $E_s$ is an interval, by inductive assumption we pick for each $f_s$ and $E_s$ a suitable $g_s\in K_W$ with $\supp (g_s)\subset E_s$. Notice  $\# E_0\leq \#\suc(t)$ and thus
$\{E_s:\ s\in \suc(t)\}, \{\{k\}:\ k\in E_0\}$ is a family of pairwise disjoint intervals of cardinality at most $ 2\# \suc(t)$. Therefore $g_t\in K_{W}$ and $\supp (g_t)\subset E$, where
$$
g_t=\w_h(f_t)(\sum_{s\in \suc(t)}g_s+\sum_{k\in E_0}e^*_k)
$$
Estimate by the condition (2) of RIS and the inductive assumption
\begin{align*}
| f_t(\sum_{k\in E}x_k)| & \leq
\w_h(f_t)\sum_{s\in \suc(t)}| f_s(\sum_{k\in E_s}x_k)| + \sum_{k\in E_0}| f_t(x_k)| 
\\
& \leq C\w_h(f_t)\sum_{s\in \suc(t)}g_s(\sum_{k\in E_s}e_k)+C\epsilon\sum_{s\in \suc(t)}\# E_s+C\w_h(f_t)\# E_0\\
& \leq Cg_t(\sum_{k\in E}e_k)+C\epsilon\# E
\end{align*}

CASE 4. $f_t$ is a subconvex rational
combination of functionals: $f_t=\sum_{s\in \suc(t)}c_sf_s$. By the
inductive assumption we pick for any $s\in \suc(t)$ and $E$
a suitable $g_s\in W$, let $g_t=\sum_{s\in \suc(t)}c_sg_s$ and
compute
\begin{align*}
| f_t(\sum_{k\in E}x_k)| & \leq \sum_{s\in \suc(t)}c_s| f_s(\sum_{k\in E}x_k)|\leq C\sum_{s\in \suc(t)}c_sg_s(\sum_{k\in E}e_k)+C\epsilon\# E\sum_{s\in \suc(t)}c_s\\
& =Cg_t(\sum_{k\in E}e_k)+C\epsilon\# E
\end{align*}

CASE 5. $f_t$ is $K$-g-special for some $K\in\N$. Let $(K_n)_n\subset\N$ be the sequence associated to $(x_n)$ as a g-separated sequence. Pick $n_0$ with $K_{n_0}< K\leq K_{n_0+1}$. Then $f_t(x_n)=0$ for any $n> n_0$ and $|f_t(x_n)|\leq \epsilon$ for any $n<n_0$. Let $g_t=e^*_{\min E}$, then 
$$
|f_t(\sum_{k\in E}x_k)|\leq Cg_t(\sum_{k\in E}e_k)+\sum_{k<n_0, k\in E}\epsilon\leq Cg_t(\sum_{k\in E}e_k)+\epsilon \# E
$$
which ends the proof. 
\end{proof}

\begin{proposition}\label{basic-inequality-cor} Fix $j\in\N$. Let $(x_k)_{k=1}^{n_j}\subset X$ be a
$(C,\frac{1}{m_j^2})$-RIS. Then for any h-weighted functional $f\in K$  we have
$$
| f(\frac{1}{n_j}\sum_{k=1}^{n_j}x_k)|\leq \begin{cases} \frac{2C}{m_j^2} \ \ \ \ \ \ \ \ \text{if}\ \ \ \w_h(f)<m_j^{-1} \\
\frac{2C}{m_j} \ \ \ \ \ \ \ \ \text{if}\ \ \ \w_h(f)=m_j^{-1}\\
\frac{3C\w_h(f)}{m_j} \ \ \ \text{if}\ \ \ \w_h(f)>m_j^{-1}
\end{cases}
$$
In particular
$$
\|\frac{1}{n_j}\sum_{k=1}^{n_j}x_k\|\leq \frac{2C}{m_j}
$$
Moreover, if the additional assumption for $j_0=j$ in the Basic Inequality (Lemma
\ref{basic-inequality}) is satisfied, then
$$
\|\frac{1}{n_j}\sum_{k=1}^{n_j}x_k\|\leq \frac{4C}{m_j^2}
$$
\end{proposition}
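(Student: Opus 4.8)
The plan is to deduce Proposition~\ref{basic-inequality-cor} from the Basic Inequality (Lemma~\ref{basic-inequality}) together with the well-known estimates for averages of the unit vector basis in the auxiliary mixed Tsirelson space $W=T[(\mathcal{A}_{2n_j},\tfrac{1}{m_j})_{j\in L_0}]$. First I would apply Lemma~\ref{basic-inequality} to the given $(C,\tfrac{1}{m_j^2})$-RIS $(x_k)_{k=1}^{n_j}$ and the fixed h-weighted $f\in K$: this produces $g\in K_W$ with
\[
\Bigl|f\Bigl(\sum_{k=1}^{n_j}x_k\Bigr)\Bigr|\le Cg\Bigl(\sum_{k=1}^{n_j}e_k\Bigr)+C\cdot\tfrac{1}{m_j^2}\cdot n_j,
\]
and, crucially, if $f$ is h-weighted then $g$ is either a unit vector $e_t^*$ or weighted with $\w(g)=\w_h(f)$. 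Dividing by $n_j$, the error term is exactly $C/m_j^2$, which is absorbed into each of the three claimed bounds, so everything reduces to estimating $\tfrac{1}{n_j}g(\sum_{k=1}^{n_j}e_k)$ in $W$.

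Next I would invoke the standard estimate in mixed Tsirelson spaces (as in \cite{atod}): for $g\in K_W$ with a tree-analysis and $e=\tfrac{1}{n_j}\sum_{k=1}^{n_j}e_k$, one has $|g(e)|\le C'/m_j^2$ if $g$ is weighted of weight $<m_j^{-1}$ or a unit vector, $|g(e)|\le C'/m_j$ if $\w(g)=m_j^{-1}$, and $|g(e)|\le C'\w(g)/m_j$ if $\w(g)>m_j^{-1}$; here the relevant sharper form gives constant $1$ in front (this is the computation that the average $\tfrac1{n_j}\sum_{k=1}^{n_j}e_k$ has norm essentially $1/m_j$ in $W$, using that $W$ is built on $\mathcal{A}_{2n_j}$ so the $\mathcal{A}_{n_j}$-operation of weight $m_j^{-1}$ is the dominant one and any strictly finer operation contributes at most a $1/m_j$-factor once more). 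Since by Lemma~\ref{basic-inequality} the weight of $g$ matches $\w_h(f)$ exactly when $g$ is weighted (and otherwise $g$ is a unit vector, handled by the first case), plugging these three cases back gives the three displayed bounds on $|f(\tfrac1{n_j}\sum x_k)|$ after combining $C\cdot(\text{estimate})+C/m_j^2\le 2C/m_j^2$, $\le 2C/m_j$, $\le 3C\w_h(f)/m_j$ respectively, using $m_j\ge 2$.

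For the unconditional ``In particular'' norm bound $\|\tfrac1{n_j}\sum_{k=1}^{n_j}x_k\|\le 2C/m_j$: every $f\in K$ either is h-weighted, g-special, a unit vector, or a subconvex combination of such; by a straightforward induction on an h-tree-analysis of $f$ (or directly from the Basic Inequality applied with general $f$, which again yields some $g\in K_W$ with $|g(e)|\le 2C/m_j$ by the same $W$-estimate) we get $|f(\tfrac1{n_j}\sum x_k)|\le 2C/m_j$ for \emph{all} $f\in K$, hence the norm bound. Finally, for the ``Moreover'' part, when the additional hypothesis of Lemma~\ref{basic-inequality} holds for $j_0=j$, the Lemma furnishes $g\in K_W$ whose tree-analysis uses no node of weight $m_j^{-1}$ at all; then the $W$-estimate for $e=\tfrac1{n_j}\sum e_k$ drops the middle case, and the worst surviving contribution of weight $\ne m_j^{-1}$ is bounded by $C'/m_j^2$, giving $\|\tfrac1{n_j}\sum x_k\|\le 4C/m_j^2$ after adding the RIS error $C/m_j^2$.

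The main obstacle is entirely the $W$-side estimate: one must pin down the precise constants in the bound for $g(\tfrac1{n_j}\sum_{k=1}^{n_j}e_k)$ over all $g\in K_W$ according to the weight of $g$, which is the classical but slightly delicate ``averaging lemma'' for mixed Tsirelson spaces, and one must carefully track how the weight of the output functional $g$ from the Basic Inequality relates to $\w_h(f)$ in each case (the Basic Inequality is stated so that these match, so this is bookkeeping rather than genuine difficulty). Everything else is routine: the RIS error term is by design $\le C/m_j^2$ and is harmless in all cases, and the passage from the three weighted cases to the global norm estimate is immediate once one recalls the structure of $K$.
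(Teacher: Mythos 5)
Your proposal is correct and follows essentially the same route as the paper, whose proof consists precisely of combining Lemma \ref{basic-inequality} with the standard estimates for averages of the unit vector basis in $W$ (cited there as \cite[Lemma II.9]{atod}). The extra detail you supply on tracking the weight of the output functional $g$ and on the ``Moreover'' case is exactly the bookkeeping the paper leaves implicit.
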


\begin{proof}The proof is a straightforward application of estimates on the basis of $W$ (cf. \cite[Lemma II.9]{atod})  and Lemma \ref{basic-inequality}. \end{proof}

The next definition follows along the lines of  the classical one with slight changes required in our setting, which do not affect the results. 

\begin{definition}[Dependent sequences] A double sequence $(x_k,f_k)_{k=1}^{n_{2j-1}}\subset
X\times K$ is called a $(\eta,D,j)$-{dependent sequence}, for
$\eta\in\{0,1\}$, $D\geq 1$, $2j-1\in L_0$, if

\begin{enumerate}

\item $(f_k)_k$ is a $j$-h-special sequence,

\item $f_k(x_k)=1$ if $\eta=1$ and $|f_k(x_k)|\leq 2^{-k}$ if $\eta=0$ and $\rng_h(x_{k-1})<\rng_h (f_k)<\rng_h (x_{k+1})$ for any $k$,

\item $2m_{j_1}^{-1}<m_{2j-1}^{-2}$ and $2m_{j_{k+1}}^{-1}\max\supp_h(x_k)<m_{2j-1}^{-2}$ for any $k$.

\item $x_k=d_k\frac{m_{j_k}}{n_{j_k}}\sum_{i=1}^{n_{j_k}}y_i^k$ for some
$(6,\frac{1}{m_{j_k}^2})$-RIS $(y^k_i)_{i=1}^{n_{j_k}}$ where $m_{j_k}^{-1}=\w_h(f_k)$ and $d_k\in [1,D]$ for any $k$,

\item $(x_k)_k$ is a $(12D,\frac{1}{m_{2j-1}^2})$-g-separated sequence,

\end{enumerate}

\end{definition}

\begin{remark}\label{obstacle}
    Notice that the condition (3)  of dependent sequences, redundant in the classical situation, is required in order to prove that the vector part of a dependent sequence is itself a suitable RIS, as we cannot manage $\rng_h(x_k)=\rng_h(f_k)$ (since projections $(I_{[1,N]})_N$ are not uniformly bounded). This condition is also the reason for defining $\sigma$ as a multivalued function, which guarantees  existence of dependent sequences. 
\end{remark}

\begin{proposition}\label{dep-est} Let $(x_k,f_k)_{k=1}^{n_{2j-1}}$ be a $(\eta,D,j)$-dependent
sequence, $D\geq 1$, $2j-1\in L_0$. Then
\begin{align*}
\|\frac{1}{n_{2j-1}}\sum_{k=1}^{n_{2j-1}}(-1)^kx_k\|\leq \frac{180D}{m_{2j-1}^2}
& \ \ \ \ \ \text{if}\ \ \ \eta=1 \\
\|\frac{1}{n_{2j-1}}\sum_{k=1}^{n_{2j-1}}x_k\|\leq \frac{180D}{m_{2j-1}^2} & \ \
\ \ \ \text{if} \ \ \  \eta=0
\end{align*}

\end{proposition}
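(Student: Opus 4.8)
The plan is to run the standard Gowers--Maurey dependent-sequence estimate, adapted to the horizontal setting via the tools already in place. Fix a $(\eta,D,j)$-dependent sequence $(x_k,f_k)_{k=1}^{n_{2j-1}}$ and set $x=\frac{1}{n_{2j-1}}\sum_{k=1}^{n_{2j-1}}(-1)^k x_k$ (respectively without signs when $\eta=0$). The first step is to observe that the vector part $(x_k)_k$ is itself, after a harmless rescaling, a $(C,\epsilon)$-RIS for $C$ proportional to $D$ and $\epsilon=m_{2j-1}^{-2}$: condition (1) of RIS follows from $\|x_k\|\leq 2D$ (since $x_k$ is $d_k$ times a normalized $\ell_1$-average-type average of a RIS, so $\|x_k\|\le 2D$), condition (2) from clause (3) of the dependent sequence definition, condition (3) from Proposition \ref{basic-inequality-cor} applied to each $x_k$ (the $\ell_1$-average structure in clause (4) gives $|h(x_k)|\le 6D\w_h(h)/m_{j_k}\le 6D\w_h(h)$ when $\w_h(h)>m_{j_k}^{-1}$, while for larger weights one uses clause (3) of the dependent sequence and the RIS estimate), and condition (4) — g-separation — is exactly clause (5). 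This is precisely the point flagged in Remark \ref{obstacle}: because we only know $\rng_h(x_{k-1})<\rng_h(f_k)<\rng_h(x_{k+1})$ rather than equality of ranges, clause (3) is what rescues the RIS property.

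The second step is to apply the Basic Inequality (Lemma \ref{basic-inequality}) to an arbitrary $f\in K$ with $f(x)=\|x\|$. The key is to verify the ``moreover'' hypothesis for $j_0=2j-1$: for any h-weighted $h\in K$ with $\w_h(h)=m_{2j-1}^{-1}$ and any interval $E$, we need $|h(\sum_{k\in E}x_k)|\le C$. This is the heart of the matter and is where the $j$-h-special structure of $(f_k)_k$ together with the injectivity properties of $\sigma$ enter. One writes $h=\pm m_{2j-1}^{-1}E'\sum_i h_i$ with the $h_i$ forming (part of) an h-special sequence; since $(f_k)$ is $j$-h-special with weights prescribed by $\sigma$, and $\sigma$ has disjoint ranges and obeys $\sigma(\mathbf g)>\max\supp(\mathbf g)\|\cdots\|_\infty^{-1}$, at most one $h_i$ can share the weight $m_{j_k}^{-1}$ of the matching $f_k$; all the other interactions $|h_i(x_k)|$ are controlled by $\max\{\w(h_i),m_{j_k}^{-1}\}$-type estimates (via Proposition \ref{basic-inequality-cor} or the RIS property), and the single coincident term contributes $O(D)$. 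Summing over the $n_{2j-1}$ blocks and dividing by $m_{2j-1}$ leaves $|h(\sum_{k\in E}x_k)|\lesssim D$. In the $\eta=0$ case (where $f_k(x_k)$ is only small) this step is easier; in the $\eta=1$ case one must use the alternating signs $(-1)^k$ to kill the would-be large term $\sum_k f_k(x_k)=n_{2j-1}$, which is standard.

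The third step is the bookkeeping: with the additional hypothesis verified, Lemma \ref{basic-inequality} yields $g\in K_W$ with tree-analysis avoiding weight $m_{2j-1}$ such that $|f(\sum_k x_k)|\le C g(\sum_k e_k)+C n_{2j-1}/m_{2j-1}^2$, and by the standard estimate for averages of the basis in the auxiliary mixed Tsirelson space $W=T[(\mathcal A_{2n_j},m_j^{-1})_{j\in L_0}]$ (cited via \cite[Lemma II.9]{atod}, as in Proposition \ref{basic-inequality-cor}), $g(\sum_{k=1}^{n_{2j-1}}e_k)\le 2n_{2j-1}/m_{2j-1}^2$ — the key gain being the extra factor $m_{2j-1}^{-1}$ that comes precisely from forbidding weight $m_{2j-1}$ in the tree-analysis. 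Combining, $\|x\|=|f(x)|\le \frac{1}{n_{2j-1}}(C\cdot 2n_{2j-1}m_{2j-1}^{-2}+C n_{2j-1}m_{2j-1}^{-2})\le 3C m_{2j-1}^{-2}$, and tracking $C\le 6\cdot 2D\cdot(\text{small constants})$ through Proposition \ref{basic-inequality-cor} and the RIS constants gives the stated bound $180D/m_{2j-1}^2$.

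The main obstacle is the second step: establishing the uniform bound $|h(\sum_{k\in E}x_k)|\le C$ for h-weighted $h$ of weight $m_{2j-1}^{-1}$. This requires carefully exploiting that the $f_k$'s are h-special with $\sigma$-prescribed weights so that $h$ cannot ``resonate'' with more than one block, and it is the only place where the antihorizontal/g-special apparatus could in principle interfere — though in fact g-special functionals are handled cleanly by CASE 5 of the Basic Inequality using g-separation, so they cause no trouble here. Everything else is a faithful transcription of the classical Gowers--Maurey dependent-sequence argument into the horizontal language, reading $\w_h$, $\rng_h$, $\supp_h$, h-block and the space $W$ in place of their classical counterparts.
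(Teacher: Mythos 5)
Your proposal follows essentially the same route as the paper's proof: show that the vector part is itself a $(C D, m_{2j-1}^{-2})$-RIS using clauses (3) and (5) of the dependent-sequence definition together with Proposition \ref{basic-inequality-cor}, verify the additional hypothesis of the Basic Inequality for $j_0=2j-1$ by exploiting the $\sigma$-determined tree structure of $j$-h-special sequences (with the alternating signs, resp.\ the smallness of $f_k(x_k)$, killing the resonant term), and conclude via the ``moreover'' clause of Proposition \ref{basic-inequality-cor}. The only discrepancies are in constant tracking (e.g.\ $\|x_k\|\leq 12D$ rather than $2D$, and the RIS constant is $18D$ while the extra hypothesis is verified with $45D$, giving $4\cdot 45D=180D$), which does not affect the validity of the argument.
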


\begin{proof}
Notice that the sequence $(x_k)_{k=1}^{n_{2j-1}}$ is itself
a $(18D,\frac{1}{m_{2j-1}^2})$-RIS  with $\|x_k\|\leq 12D$ for any $k$ by Prop.  \ref{basic-inequality-cor} applied to the $(6,\frac{1}{m_{j_k}^2})$-RIS
$(y_i^k)_{i=1}^{n_{j_k}}$ for each $k$ and conditions (3) and (5). 
Thus the same hold for 
$((-1)^kx_k)_{k=1}^{n_{2j-1}}$ also.

Let $\eta=1$. By Prop.  \ref{basic-inequality-cor} it is enough to show that for any $j$-h-special $h\in K$ and any interval $E$ we have $| h(\sum_{k\in E}(-1)^kx_k)|\leq 45D$. Take $j$-h-special $h\in K$  and an interval $E\subset\N$. Then $h$ is of the form $h=\frac{1}{m_{2j-1}}(Af_l+\dots+f_r+h_{r+1}+\dots+h_d)$ for some $j$-h-special sequence
$(f_1,\dots,f_r,h_{r+1},\dots,h_{n_{2j-1}})$, $1\leq l\leq r\leq
d\leq n_{2j-1}$, $A\subset\N$ interval, where $r$ is minimal with $h_{r+1}\neq
f_{r+1}$.

For any $k,i$ with $\w_h(h_i)\neq m_{j_k}$ by Prop. \ref{basic-inequality-cor} and the definition of dependent sequences we have $|
h_i(x_k)|\leq \frac{18D}{n_{2j-1}^2}$. Now assume that $\min E\leq r$ and compute
\begin{align*}
| h(\sum_{k\in E}(-1)^kx_k)| & \leq
|(Af_l+\dots+f_r)((-1)^{\min E}x_{\min
E}+\dots+(-1)^rx_r)|
\\
&+|
h_{r+1}(x_{r+1})|+\sum_{w(h_i)\neq
m_{j_k}} | h_i(x_k)|\\
& \leq 12D+1+12D+n_{2j-1}^2\frac{18D}{n_{2j-1}^2}\leq 43D
\end{align*}
In other cases of $E$ the computation is even simpler. The case $\eta=0$ follows analogously. \end{proof}

\begin{lemma}\label{dep-ex} For any h-block subspaces $U,V$ of $X$ there is some $D\geq 1$ so that for any $2j-1\in L_0$ there is a $(1, D, j)$-dependent sequence
$(x_k,f_k)_{k=1}^{n_{2j-1}}$ with
$(x_{2k})_{k=1}^{n_{2j-1}/2}\subset U$ and
$(x_{2k-1})_{k=1}^{n_{2j-1}/2}\subset V$. \end{lemma}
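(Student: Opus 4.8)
The construction is the standard iterative scheme for producing dependent sequences in Gowers--Maurey spaces, adapted to the horizontal setting, so the backbone is: alternately extract, from the subspaces $V$ and $U$, normalized $\ell_1$-averages of prescribed (rapidly increasing) lengths, group them into RIS-blocks to form the vectors $x_k$, and simultaneously build a $j$-h-special sequence $(f_k)_k$ whose weights are dictated by the values of $\sigma$ applied to the previously chosen functionals. Concretely, I would first fix $2j-1\in L_0$. Using Lemma~\ref{ell-averages-existence} and Remark~\ref{ris-existence}, alternately inside $V$ (for odd $k$) and $U$ (for even $k$) I would choose $\ell_1$-averages of suitable lengths $n_{j_k}$ and combine $n_{j_k}$ of them into a $(6,m_{j_k}^{-2})$-RIS $(y_i^k)_{i}$, setting $x_k = d_k \tfrac{m_{j_k}}{n_{j_k}}\sum_i y_i^k$ with $d_k\in[1,D]$ chosen so that some norming functional $f_k\in K$ with $\w_h(f_k)=m_{j_k}^{-1}$ satisfies $f_k(x_k)=1$; such $f_k$ exists because $\|\tfrac{1}{n_{j_k}}\sum_i y_i^k\|$ is of order $m_{j_k}^{-1}$ (Prop.~\ref{basic-inequality-cor}), so $d_k$ can be taken in a bounded interval — this is where the single constant $D$ depending only on $U,V$ comes from, via the lower norm estimates for h-block vectors supplied by Lemma~\ref{key}(1) (one picks the functionals $f_k$ from the h-block sequence produced there, which forces a uniform lower bound $\delta>0$ on $\|\tfrac{1}{n_{j_k}}\sum_i y_i^k\|$ along a subsequence, hence a uniform upper bound on $d_k$).

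The crucial scheduling is the choice of the weights $m_{j_k}^{-1}$: we need $\w_h(f_1)<n_{2j-1}^{-2}$ and $\w_h(f_{k+1})=m_k^{-1}$ for some $k\in\sigma(f_1,\dots,f_k)$, so that $(f_1,\dots,f_{n_{2j-1}})$ is genuinely a $j$-h-special sequence. So the extraction is inductive: having built $x_1,f_1,\dots,x_k,f_k$, I read off the infinite set $\sigma(f_1,\dots,f_k)\subset L_0\cap 2\N$, pick from it an index $2j_{k+1}$ large enough that $2m_{j_{k+1}}^{-1}\max\supp_h(x_k)<m_{2j-1}^{-2}$ (feasible since $\sigma(\cdot)$ is infinite), then go into $U$ or $V$ and build $x_{k+1}$ as an RIS-block of $\ell_1$-averages of length $n_{j_{k+1}}$ whose norming functional has weight $m_{j_{k+1}}^{-1}$. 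The alternation $U/V$ is harmless since both are h-block subspaces and the RIS-block can be taken with horizontal support as far out as needed. Condition (3) of dependent sequences is arranged by the above choice of $j_{k+1}$; condition (2) (the interleaving $\rng_h(x_{k-1})<\rng_h(f_k)<\rng_h(x_{k+1})$) is arranged by taking supports sufficiently separated, using that we may always pass to a tail of the h-block sequences in $U$ and $V$.

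Finally, condition (5) — that $(x_k)_{k=1}^{n_{2j-1}}$ is $(12D,m_{2j-1}^{-2})$-g-separated — is obtained by invoking Lemma~\ref{existence-g-separated}: after choosing each $x_k$ we have a bound $\|x_k\|\le 12D$ (from $d_k\le D$ and $\|\tfrac{1}{n_{j_k}}\sum_i y_i^k\|\le 2\cdot 6/m_{j_k}\le 12/m_{j_k}$, wait — rather $\|x_k\|=d_k m_{j_k}\|\tfrac1{n_{j_k}}\sum y_i^k\|\le D m_{j_k}\cdot \tfrac{12}{m_{j_k}}=12D$ by Prop.~\ref{basic-inequality-cor}), and Lemma~\ref{existence-g-separated} lets us thin out and push the horizontal support of the next vector past $2K_{k+1}$ for an appropriate threshold, so that g-special functionals of small index vanish on it and those of large index are small on $x_k$; iterating along $k=1,\dots,n_{2j-1}$ gives the g-separation with the common constants. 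The point where I expect the genuine work — as opposed to bookkeeping — is verifying that the constant $D$ can be chosen depending only on $U$ and $V$ (not on $j$): this rests entirely on Lemma~\ref{key}(1) giving a uniform $\delta>0$ so that, along one fixed infinite subset of indices, every RIS-block of $\ell_1$-averages drawn from $U$ (resp.\ $V$) has norm at least $\delta m_{j_k}^{-1}$ times its length-normalization, hence $d_k\le \delta^{-1}$; everything else is the routine Gowers--Maurey interlacing argument, now carried out horizontally.
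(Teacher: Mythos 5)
Your proposal is correct and follows essentially the same route as the paper: extract h-block $\ell_1$-averages from $U$ and $V$, use Lemma \ref{key}(1) to fix in advance the h-block norming functionals and the uniform constant $\delta$ (hence $D$) independently of $j$, form the $x_k$ as RIS-blocks with weights scheduled by the multivalued $\sigma$ (which, as you correctly note, is exactly what makes condition (3) attainable), and invoke Lemma \ref{existence-g-separated} for the g-separation in condition (5). No gaps worth flagging.
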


\begin{proof} By Lemma \ref{existence-g-separated} 
pick h-block sequences $(y_{2i})_i\subset U$, $(y_{2i-1})_i\subset V$ of $\ell_1$-averages of increasing length. By Lemma \ref{key},  passing to subsequences and allowing some perturbation, we can assume that for some h-block sequences $(f_{2i})_i, (f_{2i-1})_i\subset K$ and $0<\delta_1, \delta_2\leq 1$ we have $f_{2i-1}(y_{2i-1})=\delta_1$  and $f_{2i}(y_{2i})=\delta_2$. 
 
Now fix $j\in L_0$ and pick inductively on $k$, using Remark \ref{ris-existence}, Lemma \ref{existence-g-separated}, the definition of $\sigma$ and the fact that $(y_{2i-1})_i$, $(y_{2i})_i$, $(f_{2i-1})_i$, $(f_{2i})_i$ are h-block, 
$(6,\frac{1}{m_{j_k}^2})$-RIS $(y_i^k)_{i=1}^{n_{j_k}}\subset (y_{2i})_i$ and $(f_i^k)_{i=1}^{n_{j_k}}\subset (f_{2i})_i$
for $k\in 2\N$ and $(6,\frac{1}{m_{j_k}^2})$-RIS $(y_i^k)_{i=1}^{n_{j_k}}\subset (y_{2i-1})_i$ and $(f_i^k)_{i=1}^{n_{j_k}}\subset (f_{2i-1})_i$
for $k\in 2\N-1$ such that for $f_k=m_{j_k}^{-1}\sum_{i=1}^{n_{j_k}}f_i^k$, $k=1,\dots,n_{2j-1}$, $d_{2k}=\delta_2^{-1}$, $d_{2k-1}=\delta_1^{-1}$ and $x_k=d_k\frac{m_{j_k}}{n_{j_k}}\sum_{i=1}^{n_{j_k}}y_i^k$, $k=1,\dots,n_{2j-1}$  conditions (1)-(4) of dependent sequences are satisfied. In particular with multivalued $\sigma$ we can guarantee (3). Passing to a further sequence by Lemma \ref{existence-g-separated} we obtain the condition (5).

Then the double sequence $(x_k,f_k)_{k=1}^{n_{2j-1}}$, is a 
$(1,D,j)$-dependent sequence, with $D=\max\{\frac{1}{\delta_1}, \frac{1}{\delta_2}\}$, with desired properties. \end{proof}

Now we are ready to prove the horizontal HI  property of the space $X$. 

\begin{theorem}\label{hi} No two h-block subspaces of $X$ form a direct sum.   
\end{theorem}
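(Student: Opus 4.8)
The plan is to mimic the classical Gowers--Maurey argument showing hereditary indecomposability, working entirely in the horizontal setting and using the tools assembled in Section~4. Suppose towards a contradiction that two h-block subspaces $U,V$ of $X$ form a direct sum, so there is a constant $c>0$ with $\|u-v\|\ge c(\|u\|+\|v\|)$ for all $u\in U$, $v\in V$. First I would invoke Lemma~\ref{dep-ex} to obtain, for each $2j-1\in L_0$, a $(1,D,j)$-dependent sequence $(x_k,f_k)_{k=1}^{n_{2j-1}}$ with the even-indexed vector part lying in $U$ and the odd-indexed part lying in $V$, where $D\ge 1$ depends only on $U,V$. Since $2j-1\in L_0$, the functional $f=m_{2j-1}^{-1}\sum_{k=1}^{n_{2j-1}}f_k$ is a $j$-h-special functional, hence belongs to $K$.

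The lower estimate comes from testing this $f$ against the ``alternating'' combination. Using conditions (2) and (4) of the dependent sequence (namely $f_k(x_k)=1$ and the control $|f_i(x_k)|\le 18D/n_{2j-1}^2$ when the weights $\w_h(f_i)\ne m_{j_k}$, exactly as in the proof of Prop.~\ref{dep-est}), one gets
\[
\Bigl\| \frac{1}{n_{2j-1}}\sum_{k=1}^{n_{2j-1}}x_k \Bigr\|
\;\ge\; f\Bigl(\frac{1}{n_{2j-1}}\sum_{k=1}^{n_{2j-1}}x_k\Bigr)
\;\ge\; \frac{1}{m_{2j-1}}\bigl(1-o(1)\bigr)\;\ge\;\frac{1}{2m_{2j-1}}.
\]
On the other hand, the upper estimate is precisely Prop.~\ref{dep-est} with $\eta=1$: the alternating average satisfies
\[
\Bigl\| \frac{1}{n_{2j-1}}\sum_{k=1}^{n_{2j-1}}(-1)^kx_k \Bigr\|\;\le\;\frac{180D}{m_{2j-1}^2}.
\]

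Now write $u=\frac{1}{n_{2j-1}}\sum_{k \text{ even}}x_k\in U$ and $v=-\frac{1}{n_{2j-1}}\sum_{k \text{ odd}}x_k\in V$ (adjusting signs so that $u-v$ is the plain average and $u+v$ is, up to sign, the alternating one, or vice versa). Each of $u,v$ has norm at least roughly $\frac{1}{4m_{2j-1}}$ — this follows again by applying $f$, restricted to the even (resp.\ odd) indices, together with the observation preceding the lemma that such restrictions of $j$-h-special functionals remain in $K$ after inserting the appropriate $\w_h$ factor; here conditions (2), (3), (5) of the dependent sequence give the same $1-o(1)$ control. Then $\|u+v\| = \|\frac{1}{n_{2j-1}}\sum (-1)^{k+1}x_k\| \le 180D/m_{2j-1}^2$ while $c(\|u\|+\|v\|)\ge c/(4m_{2j-1})$, so $c/(4m_{2j-1})\le 180D/m_{2j-1}^2$, i.e.\ $m_{2j-1}\le 720 D/c$. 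Since $2j-1$ ranges over the infinite set $L_0$ and $m_{2j-1}\to\infty$, this is a contradiction, and the theorem follows.

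The main obstacle is not the combinatorics of the dependent sequence — that is handled by Lemma~\ref{dep-ex} and Prop.~\ref{dep-est} — but rather making sure the lower bounds on $\|u\|$ and $\|v\|$ individually (not just on the full average) are legitimate, i.e.\ that the ``half'' functionals $m_{2j-1}^{-1}\sum_{k\text{ even}}f_k$ and $m_{2j-1}^{-1}\sum_{k\text{ odd}}f_k$ are genuinely members of $K$. In the classical setting this is immediate from unconditionality-type closure of the special functional, but here one must check it is compatible with the definition of $j$-h-special sequences (the $\sigma$-coding links consecutive weights, so dropping alternate terms a priori breaks the special structure). The cleanest fix is to instead only split off an \emph{initial} block of the dependent sequence: test $f$ against the average of all $x_k$ to get the lower bound $\gtrsim 1/m_{2j-1}$ on $\|u-v\|=\|\frac{1}{n_{2j-1}}\sum x_k\|$ where $u,v$ collect the even/odd terms, and separately use Prop.~\ref{dep-est}($\eta=1$) for $\|u+v\|\lesssim D/m_{2j-1}^2$; then $\|u\|+\|v\|\ge \|u-v\|\gtrsim 1/m_{2j-1}$ automatically, and the direct-sum inequality $\|u+v\|\ge c(\|u\|+\|v\|)$ yields the same contradiction $m_{2j-1}\lesssim D/c$ without ever needing a ``half'' functional.
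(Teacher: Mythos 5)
Your proposal is correct and, in its final form (the ``cleanest fix'' at the end), is essentially the paper's own argument: the paper likewise tests the $j$-h-special functional $m_{2j-1}^{-1}\sum_k f_k$ against the plain average of the $x_k$ for the lower bound, uses Proposition~\ref{dep-est} with $\eta=1$ on the alternating average for the upper bound, and never needs the ``half'' functionals you rightly flagged as problematic, since the direct-sum inequality is applied directly to the pair (plain average, alternating average). The only cosmetic difference is normalization (the paper scales by $m_{2j-1}/n_{2j-1}$ rather than $1/n_{2j-1}$), and your worry is resolved exactly as you resolve it.
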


\begin{proof} Fix h-block subspaces  $U,V$ of $X$.  Pick $D\geq 1$ ensured by Lemma \ref{dep-ex} and for any $2j-1\in L_0$ choose a $(1,D,j)$-dependent sequence
$(x_k,f_k)_{k=1}^{n_{2j-1}}$ with $x_{2k}\in U$ and $x_{2k-1}\in
V$ for any $k=1,\dots,n_{2j-1}/2$. Let
$$
y=\frac{m_{2j-1}}{n_{2j-1}}\sum_{k=1}^{n_{2j-1}/2}x_{2k}\in
U, \ \ \ \
z=\frac{m_{2j-1}}{n_{2j-1}}\sum_{k=1}^{n_{2j-1}/2}x_{2k-1}\in V
$$
Then by Proposition \ref{dep-est} we have $\|y-z\|\leq \frac{180D}{m_{2j-1}}$,
whereas
$$
\|y+z\|\geq \frac{1}{m_{2j-1}}\sum_{k=1}^{n_{2j-1}}f_k(y+z)\geq 1
$$
which, as $j$ is arbitrary big, ends the proof. 
\end{proof}

\section{Bounded operators on $X$}

In this section we prove the "scalar+horizontal strictly singular" property of $X$ continuing the Gowers-Maurey scheme from the previous section and describe  the behaviour of bounded operators on $X$ with respect to subspaces $X_s$, $s\in\N$. The reasoning, based on classical mixed Tsirelson spaces tools including the results from the previous section and on already shown complete boundedness of $(I_s)_s$, leads to Cor. \ref{main2}. The latter combined  with Cor. \ref{main1} proves the main theorem.

\begin{proposition}\label{op-xs} Fix $s\in\N$.
  \begin{enumerate}
      \item The spaces $X_s$ and $(I-I_s)(X)$ are totally incomparable (i.e. no  infinite dimensional subspaces $U\subset X_s$ and $V\subset (I-I_s)(X)$ are isomorphic). 
      \item   For any $T\in\mathscr{L}( X)$ there is $a_s\in\R$  with  $T\circ I_s-a_s I_s\in\mathscr{SS}(X)$.
 \end{enumerate}
 \end{proposition}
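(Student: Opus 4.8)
The plan is to deduce (2) from (1) together with Theorem~\ref{xs}, so that the real content is in (1).

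\textbf{Reduction of (2) to (1).} Since $X=X_s\oplus(I-I_s)(X)$ with $I_s$ a norm-one projection, write $T\circ I_s = I_sTI_s + (I-I_s)TI_s$. First I would apply Theorem~\ref{xs} to the Gowers--Maurey space $X_s$ (Remark~\ref{xs-remark}) and to the operator $I_s\circ T|_{X_s}\in\mathscr L(X_s)$, obtaining $a_s\in\R$ and $S_s\in\mathscr{SS}(X_s)$ with $I_sTI_s - a_sI_s = S_s\circ I_s$ as operators on $X$. This operator is $0$ on $(I-I_s)(X)$ and is strictly singular on $X$: if it were an isomorphism on an infinite dimensional $W\subset X$, then $I_s|_W$ would be an isomorphism onto $I_s(W)\subset X_s$ and $S_s$ an isomorphism on $I_s(W)$, contradicting strict singularity of $S_s$. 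The second summand $(I-I_s)TI_s=\big((I-I_s)T|_{X_s}\big)\circ I_s$ maps $X_s$ into $(I-I_s)(X)$ and is $0$ on $(I-I_s)(X)$; if it were an isomorphism on some infinite dimensional $W$, the same projection argument would produce an infinite dimensional subspace of $X_s$ isomorphic to a subspace of $(I-I_s)(X)$, contradicting~(1). Hence $T\circ I_s-a_sI_s$ is a sum of two strictly singular operators, so lies in $\mathscr{SS}(X)$.

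\textbf{Proof of (1): the scheme.} By a standard gliding hump argument, $X_s$ and $(I-I_s)(X)$ fail to be totally incomparable only if there are normalized block sequences $(u_n)_n\subset X_s$ and $(v_n)_n\subset(I-I_s)(X)$ that are $K$-equivalent for some $K\ge 1$; after an initial thinning I may assume $(u_n)$ admits a block sequence $(\phi_n)_n\subset K_s$ with $\phi_n(u_n)\ge\tfrac12$. The idea is to derive a contradiction by comparing the norms of the two sides at the scales $n_{2i}$ with $2i\in L_s\cap2\N$, using that the weight indices $L_s$ governing $X_s$ are disjoint from $L_0$ and from $\bigcup_{t\ne s}L_t$, and that the growth conditions $(*)$ make weights at different indices incomparable.

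\textbf{Proof of (1): the two estimates.} On the $X_s$-side one has, for every $2i\in L_s\cap2\N$ and every set $A$ of $n_{2i}$ consecutive indices, that $m_{2i}^{-1}\sum_{n\in A}\phi_n\in K_s$ by rule~(2), whence $\big\|\tfrac{m_{2i}}{n_{2i}}\sum_{n\in A}u_n\big\|_{X_s}\ge\tfrac12$; the same lower bound holds with $(u_n)$ replaced by any block combinations $(\bar u_k)$ with $\inf_k\|\bar u_k\|_{X_s}\ge K^{-1}$. On the $(I-I_s)(X)$-side I would show the opposite, that suitable block combinations have $(m_{2i}/n_{2i})$-averages tending to $0$ along $2i\in L_s$. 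If $\supp_h(v_n)$ is unbounded (after thinning) I may assume $(v_n)$ is h-block; then $[v_n]$ is an h-block subspace, so by Lemmas~\ref{ell-averages-existence}, \ref{av-est}, \ref{existence-g-separated} and Remark~\ref{ris-existence} it contains, for each prescribed $\epsilon>0$, a $(6,\epsilon)$-RIS $(\bar v_k)$ whose elements are normalized block combinations of the $v_n$. The Basic Inequality (Lemma~\ref{basic-inequality}) then gives, for $\#A=n_{2i}$,
$$
\Bigl\|\tfrac1{n_{2i}}\sum_{k\in A}\bar v_k\Bigr\|_X\le 6\Bigl\|\tfrac1{n_{2i}}\sum_{k=1}^{n_{2i}}e_k\Bigr\|_W+6\epsilon,
$$
and since $2i\notin L_0$, a direct computation in $W=T[(\mathcal A_{2n_j},m_j^{-1})_{j\in L_0}]$ bounds $\|\sum_{k=1}^{n_{2i}}e_k\|_W$ by $n_{2i}/m_{l_+}$ with $l_+=\min\{l\in L_0:l>2i\}$; by $(*)$ one has $m_{l_+}\ge m_{2i}^{m_{2i}}$ and $n_{2i}\gg m_{2i}$, so $m_{2i}\big\|\tfrac1{n_{2i}}\sum_{k=1}^{n_{2i}}e_k\big\|_W\to0$. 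Picking first $\epsilon$ small and then $2i$ large gives $\big\|\tfrac{m_{2i}}{n_{2i}}\sum_{k\in A}\bar v_k\big\|_X<\tfrac1{4K^2}$, whereas the matching combinations $(\bar u_k)$ have $\|\bar u_k\|_{X_s}\ge K^{-1}$, so $\big\|\tfrac{m_{2i}}{n_{2i}}\sum_{k\in A}\bar v_k\big\|_X\ge K^{-1}\big\|\tfrac{m_{2i}}{n_{2i}}\sum_{k\in A}\bar u_k\big\|_{X_s}\ge\tfrac1{2K^2}$, a contradiction. If instead $\supp_h(v_n)$ stays in a fixed finite $F\subset\N\setminus\{s\}$, then by a standard finite decomposition argument I may pass to a subspace on which some $I_t$, $t\in F$, is an isomorphism, reducing to the same comparison of $X_s$ with $X_t$; there $X_t$ is a Gowers--Maurey space with weight set $L_t$, $2i\in L_s\subset\N\setminus L_t$, and Lemma~\ref{ts-ris} applied inside $X_t$ produces the block sequence directly with $\big\|\tfrac{m_{2i}}{n_{2i}}\sum_{n\in A}v_n\big\|_{X_t}\le 5/m_{2i}\to0$, giving the same contradiction.

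\textbf{The main obstacle.} The delicate point is the upper estimate on the $(I-I_s)(X)$-side: one must control bounded block combinations of the $v_n$ simultaneously against h-regular, h-special and g-special functionals. This is exactly what the Basic Inequality does once g-separation (Lemma~\ref{existence-g-separated}) is used to neutralise g-special functionals as in CASE~5 of its proof, and it reduces the whole question to the arithmetic fact that, by $(*)$, the weights available to $X_s$ and to $(I-I_s)(X)$ live at incomparable scales; everything else is bookkeeping.
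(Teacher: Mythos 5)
Your reduction of (2) to (1) via the splitting $T\circ I_s=I_sTI_s+(I-I_s)TI_s$ and Theorem \ref{xs} is exactly the paper's argument. For (1) your overall strategy --- playing a lower $\ell_1^{n_{2i}}$-estimate at a scale $2i\in L_s$ against an upper estimate at the same scale on the other side --- is also the paper's, but you run it in the opposite direction: the paper puts the \emph{upper} estimate on the $X_s$ side (Lemma \ref{ts-ris} applied inside the Gowers--Maurey space $X_s$ with $j\notin L_s$) and the \emph{lower} estimate on the image side (via rule (2) for $2j\in L_t$, resp.\ Lemma \ref{key}(1) and rule (4) for $2j\in L_0$), which lets lower estimates pass through the norm-one projections $I_t$ and avoids the Basic Inequality altogether. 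Your orientation creates two genuine gaps.

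First, your dichotomy on $(v_n)\subset(I-I_s)(X)$ --- ``$\supp_h(v_n)$ unbounded, hence after thinning h-block'' versus ``$\supp_h(v_n)$ contained in a fixed finite $F$'' --- is not exhaustive. A block sequence such as $v_n=\tfrac{1}{\sqrt2}(w_n+z_n)$ with $w_n\in X_1$ normalized and $z_n\in X_{k_n}$, $k_n\to\infty$, has unbounded h-supports, admits no h-block subsequence even after perturbation, and its supports lie in no fixed finite set. The correct split (the one the paper uses) is whether some $I_t$, $t\neq s$, fails to be strictly singular on the relevant subspace: if so you are in your ``finite $F$'' situation, and only if all such $I_t$ are strictly singular can you perturb to an h-block sequence. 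Second, the quantitative upper bound in your h-block case is wrong as stated: $\|\sum_{k=1}^{n_{2i}}e_k\|_W$ is \emph{not} bounded by $n_{2i}/m_{l_+}$. Already the single functional $m_l^{-1}(e_1^*+\dots+e_{2n_l}^*)\in K_W$ with $l\in L_0$, $l<2i$, gives $\|\sum_{k=1}^{n_{2i}}e_k\|_W\geq 2n_l/m_l$, and since $l_+$ may lie far beyond $2i$ one can even have $n_{2i}/m_{l_+}<1$. What saves your argument is that for $2i\notin L_0$ no h-weighted functional in $K$ has weight $m_{2i}^{-1}$, so the additional hypothesis of the ``Moreover'' part of Lemma \ref{basic-inequality} holds vacuously and Proposition \ref{basic-inequality-cor} gives $\|\tfrac{1}{n_{2i}}\sum_k\bar v_k\|\leq 4C/m_{2i}^2$; without that refinement the generic bound is only $2C/m_{2i}$, and multiplying by $m_{2i}$ does not produce a quantity tending to $0$. (Relatedly, $\epsilon$ must be taken of order $m_{2i}^{-2}$ \emph{after} fixing $2i$, not before.) Both gaps are repairable, but as written the case analysis and the key estimate do not close.
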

 
 \begin{proof}
(1) Assume the contrary. Let  $T: U\to V$, for some infinite dimensional subspaces $U\subset X_s$, $V\subset (I-I_s)(X)$, be an isomorphism. Allowing some perturbation we can pick a normalized block sequence $(u_n)_n\subset U$ with $(Tu_n)_n\subset V$ also block. Let $U'=\overline{\spa }\{u_n: n\in\N\}$. 

Assume first that for some $s\neq t\in\N$ the operator $I_t\circ T: U'\to X_t$ is not strictly singular. Passing to a further block sequence we can assume that $I_t\circ T: U'\to X_t$ is an isomorphism onto its image. 
Then by Remark \ref{xs-remark} and  Lemma \ref{ts-ris} for any $2j\in L_t$ pick a normalized block sequence  $(z_i)_{i=1}^{n_{2j}}$ of $(u_n)_n$ with $\|z_1+\dots+z_{n_{2j}}\|\leq 5n_{2j}m_{2j}^{-2}$. 
On the other hand, by the lower estimate in $X_t$, as $(Tz_i)_i$ is also a block sequence, we have $\|I_tTz_1+\dots+I_tTz_{n_{2j}}\|\geq n_{2j}m_{2j}^{-1}\|T\|^{-1}$, which for large $2j\in L_t$ yields a contradiction. 

As all operators $I_t\circ T$, $t\neq s$, are strictly singular, allowing some perturbation pick a  normalized block sequence $(u'_n)_n\subset U'$ with $(Tu'_n)_n$ (seminormalized) h-block. By Remark \ref{xs-remark} and  Lemma \ref{ts-ris} for any $2j\in L_0$  pick a  normalized block sequence $(w_i)_i$ of $(u'_n)_n$ with $\|\sum_{i\in A}w_i\|\leq 5n_{2j}m_{2j}^{-2}$ provided $\# A\leq n_{2j}$. 
By Lemma \ref{key} (1) we obtain h-block $(f_i)_{i\in J}\subset K$ with $f_i(Tw_i)\geq \delta$ for some universal $\delta>0$ and all $i\in J$ and $f_i(Tw_k)=0$ for any $i\neq k\in J$. Therefore for any $2j\in L_0$ and $A\subset J$, $\# A\leq n_{2j}$, we have 
$$
\|\sum_{i\in A}Tw_i\|\geq m_{2j}^{-1}\sum_{i=1}^{n_{2j}}f_i(Tw_i)\geq n_{2j}m_{2j}^{-1}\delta
$$
which by the choice of $(w_i)_i$ contradicts the boundedness of $T$.  

(2) For any bounded operator $T\in\mathscr{L}(X)$ write  $T\circ I_s=(I-I_s)\circ T\circ I_s+ I_s\circ T\circ I_s$. Then apply (1) and the  "scalar+strictly singular" property of $X_s$ (Remark \ref{xs-remark} and Thm \ref{xs}). 
\end{proof}

We shall use the horizontal version of the strictly singular property. 

\begin{definition}
An operator $Q\in\mathscr{L}( X)$ is horizontally strictly singular if for no h-block subspace $U$ of $X$ the operator $Q|_U: U\to X$ is an isomorphism onto its image.   \end{definition}

\begin{proposition} \label{op-ss}
Let $S\in\mathscr{L}( X)$ satisfy $S\circ I_s\in\mathscr{SS}(X)$ for any $s\in\N$. Then $S\in\mathscr{SS}(X)$.   \end{proposition}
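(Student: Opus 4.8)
\emph{The plan is to argue by contradiction.} Suppose $S\notin\mathscr{SS}(X)$. Then $S$ is bounded below on an infinite dimensional subspace, and by a routine perturbation (the basis of $X$ is bimonotone) we obtain a normalized block sequence $(w_n)\subset c_{00}$ and $c>0$ with $\|Sx\|\ge c\|x\|$ for all $x\in W:=\overline{\spa}\{w_n:n\in\N\}$. We record two ideal-theoretic observations. Since $\mathscr{SS}(X)$ is a closed operator ideal, $S\circ I_A=\sum_{s\in A}S\circ I_s\in\mathscr{SS}(X)$ for every finite $A\subset\N$. Moreover, writing $I_s\circ S=I_s\circ S\circ I_s+I_s\circ S\circ(I-I_s)$ and using that $X_s$ and $(I-I_s)(X)$ are totally incomparable (Proposition \ref{op-xs}(1)), the second summand is strictly singular, while $I_s\circ S\circ I_s=I_s\circ(S\circ I_s)\in\mathscr{SS}(X)$; hence $I_s\circ S\in\mathscr{SS}(X)$, and so $I_{[1,L]}\circ S=\sum_{s=1}^LI_s\circ S\in\mathscr{SS}(X)$ for every $L$.

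First I would dispose of the \emph{horizontally concentrated} case. Passing to a subsequence we may assume $\supp_h(w_n)\subset[1,q_n]$ with $(q_n)$ strictly increasing, and set $v_n=(I-I_{[1,q_{n-1}]})w_n$, which is an h-block vector (its horizontal support lies in $(q_{n-1},q_n]$), and $u_n=I_{[1,q_{n-1}]}w_n$. A gliding-hump analysis of the finite sets $\supp_h(w_n)$, using the splitting $w_n=u_n+v_n$, shows that after passing to a subspace of $W$ and a null perturbation one of the following occurs: (a) there are a finite $A\subset\N$ and a normalized block sequence $(w_n')\subset W$ with $\sum_n\|w_n'-I_Aw_n'\|<\infty$ arbitrarily small, or (b) $W$ contains a normalized h-block sequence on which $S$ is still bounded below, i.e.\ $S|_U$ is an isomorphism onto its image for some h-block subspace $U$. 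In case (a), for normalized $x=\sum a_nw_n'$ one has $\|SI_Ax-Sx\|\le\|S\|\sum_n|a_n|\,\|w_n'-I_Aw_n'\|$, which is as small as we like since $|a_n|$ is bounded; hence $\|SI_Ax\|\ge c/2$ on $\overline{\spa}\{w_n'\}$, contradicting $S\circ I_A\in\mathscr{SS}(X)$.

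It remains to rule out case (b), where I would run the horizontal version of the Gowers--Maurey "no operator is bounded below" argument. Using Lemmas \ref{ell-averages-existence} and \ref{dep-ex} and Remark \ref{ris-existence} inside $U$, together with Lemma \ref{key}(2), the aim is to build for every $2j-1\in L_0$ a $(1,D,j)$-dependent sequence $(x_k,f_k)_{k=1}^{n_{2j-1}}$ with $(x_k)\subset U$ and with the extra property $f_k(Sx_k)\to0$ while $f_k(x_k)=1$. Lemma \ref{key}(2) is exactly the tool for choosing the functionals testing the $\ell_1$-averages out of which the $x_k$ are assembled so as to see $x_k$ but not $Sx_k$; its two hypotheses, that these averages $y$ satisfy $\|y\|_{h,\infty}\to0$ (which holds for $\ell_1$-averages of growing length built from h-block sequences) and that $Sy$ stays bounded away from $\R y$, are to be secured by first passing to an h-block subspace on which $S$ is "horizontally transversal" — this is where $I_{[1,L]}\circ S\in\mathscr{SS}(X)$ for all finite $L$, Proposition \ref{op-xs} and the horizontal HI property (Theorem \ref{hi}) combine to show that $S$ cannot asymptotically act as a nonzero scalar on any h-block subspace. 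Granting such a dependent sequence, put $z=\frac1{n_{2j-1}}\sum_{k=1}^{n_{2j-1}}x_k$. Then $\frac1{m_{2j-1}}\sum_kf_k\in K$ and takes the value $\frac1{m_{2j-1}}$ on $z$, so $\|z\|\ge m_{2j-1}^{-1}$. On the other hand each $Sx_k$ inherits from $x_k$ (up to a null perturbation and a bounded constant) the structure of an RIS-average, and $f_k(Sx_k)\to0$, so $((Sx_k)_k,(f_k)_k)$ is, after a null perturbation, a $(0,D',j)$-dependent sequence; Proposition \ref{dep-est} then gives $\|Sz\|=\|\frac1{n_{2j-1}}\sum_kSx_k\|\le 180D'\,m_{2j-1}^{-2}$. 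Combining, $c\,m_{2j-1}^{-1}\le c\|z\|\le\|Sz\|\le 180D'\,m_{2j-1}^{-2}$, i.e.\ $m_{2j-1}\le 180D'/c$, which is absurd for $j$ large. Hence $S\in\mathscr{SS}(X)$.

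\emph{The two genuinely hard points} are: the reduction in the second paragraph, producing in the absence of horizontal concentration an h-block subspace on which $S$ remains bounded below (the sets $\supp_h(w_n)$ can overlap arbitrarily, so this needs a careful stabilization argument together with Lemma \ref{key}); and, within the third paragraph, the verification that $S$ does not act asymptotically as a nonzero scalar on an h-block subspace, which is what permits the special functionals of the dependent sequence to be chosen to annihilate $Sx_k$. The latter is the step that really uses the full hypothesis $S\circ I_s\in\mathscr{SS}(X)$ for every $s$, beyond the finite-set version used to settle case (a).
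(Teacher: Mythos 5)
Your overall strategy (reduce to an h-block subspace on which $S$ is bounded below, then kill that case) is broadly parallel to the paper's, but both halves have genuine gaps. First, the dichotomy in your second paragraph is not a consequence of a gliding-hump analysis of the sets $\supp_h(w_n)$: a third possibility, namely that for some finite $A$ both $I_A$ and $I-I_A$ are bounded below on a tail of $W$, is not excluded by any bookkeeping of supports. This is exactly the point where the paper uses Proposition \ref{op-xs}(1): if $I_A$ and $I-I_A$ were both bounded below on a tail $W'$, then $I_A(W')\subset\bigoplus_{s\in A}X_s$ and $(I-I_A)(W')\subset (I-I_A)(X)$ would be isomorphic infinite dimensional subspaces, contradicting total incomparability. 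You do invoke Proposition \ref{op-xs}(1), but only to derive $I_s\circ S\in\mathscr{SS}(X)$, which is not where it is needed. (Your case (a), once reached, is handled correctly and corresponds to the paper's observation that $(I-I_s)$ must be bounded below on a tail of the subspace because $S\circ I_s$ is strictly singular.)

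Second, your disposal of case (b) is both harder than necessary and, as written, broken. The claim that $\bigl((Sx_k)_k,(f_k)_k\bigr)$ is, ``after a null perturbation'', a $(0,D',j)$-dependent sequence fails at condition (4) of the definition: the vectors of a dependent sequence must be scaled averages of $(6,\cdot)$-RIS's, and there is no reason the images $Sy_i^k$ of your $\ell_1$-averages satisfy the RIS upper estimates $|f(Sy_i^k)|\leq C\w_h(f)$; so Proposition \ref{dep-est} cannot be applied to $(Sx_k)_k$. The workable orientation, used in the paper's Theorem \ref{scalar+hss}, is the opposite one: choose $f_k$ with $|f_k(x_k)|\leq 2^{-k}$ and $f_k(Sx_k)>\delta$, bound $\|\frac{1}{n_{2j-1}}\sum_k x_k\|$ from above by Proposition \ref{dep-est} and $\|S(\cdot)\|$ from below by the special functional, contradicting $\|S\|<\infty$. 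But none of this machinery is needed here: the hypothesis hands you, for each $s$, a normalized $x_s\in X_s$ with $\|Sx_s\|<4^{-s}$, so $S$ tends to zero along normalized h-block sequences of $(x_s)_s$; Theorem \ref{hi} then says that any h-block subspace $U$ and $\overline{\spa}\{x_s: s\geq s_0\}$ do not form a direct sum, so their unit spheres almost intersect and $S$ cannot be bounded below on $U$. This is the paper's first step, and it replaces your entire third paragraph.
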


\begin{proof} 
By the assumption on $S$ pick a normalized h-block sequence  $(x_s)_s\subset X$, so that  $\supp_h(x_s)=\{s\}$ and $\|Sx_s\|< 4^{-s}$ for each $s\in\N$. It follows that for any normalized h-block sequence $(y_n)_n$ of $(x_s)_s$ we have $Sy_n\to 0$. By Thm \ref{hi} the operator $S$ is horizontally strictly singular. 

Assume that there is a block subspace $U$ of $X$ such that $S|_U$ is an isomorphism onto its image. 

Fix $s\in\N$. Notice there is $\varepsilon_0>0$ and $k_0\in\N$ such that for any normalized $u\in U$ with $\supp(u)>k_0$ we have $\|(I-I_s)(u)\|\geq\varepsilon_0$. Otherwise we could pick a block sequence $(u_n)_n\subset U$ with $u_n-I_su_n\to 0$, which contradicts the strict singularity of $S\circ I_s$. 

It follows that for any $\epsilon>0$ and $k\in\N$ there is a normalized vector $u\in U$ with $\supp(u)>k$ and $\|I_su\|<\epsilon$. Otherwise there would be a finite-codimensional subspace $U'$ of $U$ with spaces $I_s(U')$, $(I-I_s)(U')$ isomorphic to $U'$, which contradicts (1) of Prop. \ref{op-xs}. 

As $s\in\N$ is arbitrary, with to some perturbation we can pick a normalized h-block sequence $(u_n)_n\subset U$, which contradicts the horizontal strict singularity of $S$. 
\end{proof}

Continuing the reasoning from the previous section we prove the "scalar+horizontal strictly singular" property of $X$. Recall that $I_0$ denotes the identity on $X$. 

\begin{theorem}\label{scalar+hss}
    Any bounded operator $T\in\mathscr{L}( X)$ is  of the form $a_0 I_0+Q$, where $Q$ is horizontally strictly singular. 
\end{theorem}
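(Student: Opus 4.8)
The plan is to adapt the classical Gowers--Maurey dichotomy argument (Theorem \ref{xs}) to the horizontal setting, exactly as the previous section set up all the tools. Fix $T\in\mathscr{L}(X)$. First I would show that there is a scalar $a_0$ such that $T-a_0I_0$ is \emph{not} bounded below on any h-block subspace, i.e. is horizontally strictly singular. To produce the candidate scalar, I would run the standard argument on a single h-block sequence: take an h-block sequence $(x_k)$ of $\ell_1$-averages of strictly increasing lengths $(n_{j_k})\subset L_0$, pass to a $(6,\epsilon)$-RIS by Remark \ref{ris-existence}, and consider the RIS-vectors $z_j=\tfrac{m_{n_j}}{n_{n_j}}\sum_{k}x_k^{(j)}$ built from blocks of it, which are seminormalized by Proposition \ref{basic-inequality-cor} combined with the fact that an h-weighted functional of weight $m_{n_j}^{-1}$ witnesses $\|z_j\|\ge 1/m_{n_j}$ roughly. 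The key point, standard in this theory, is that if $T$ were bounded below on the relevant h-block subspace by two different ``effective scalars'' along two disjointly supported RIS sequences, one could assemble a $(1,D,j)$-dependent sequence with $T$ moving its odd and even parts differently, and Proposition \ref{dep-est} would then force $\|T(y-z)\|$ small while $\|Ty+Tz\|$ stays bounded below --- contradiction, since $\|T\|\|y-z\|$ must dominate. This pins down a single $a_0$.

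Concretely, I would argue as follows. Suppose first $T$ is itself horizontally strictly singular; then $T=0\cdot I_0+T$ and we are done. Otherwise $T$ is bounded below on some h-block subspace $U$. I would show that for \emph{every} h-block subspace $W\subset U$, $T|_W$ cannot be bounded below ``away from a scalar multiple of the inclusion'': more precisely, I claim there is $a_0\in\R$ so that for every $\epsilon>0$ every h-block subspace of $X$ contains a normalized h-block vector $x$ with $\|Tx-a_0x\|<\epsilon$. This is the horizontal analogue of the statement that diagonal-type behaviour of $T$ along RIS averages is asymptotically constant. The proof is the usual one: if along two h-block subspaces the ``local scalars'' $a,a'$ attached to dependent-sequence RIS-vectors differed, build via Lemma \ref{dep-ex} a $(1,D,j)$-dependent sequence $(x_k,f_k)_{k=1}^{n_{2j-1}}$ with odd-indexed $x_k$ coming from one subspace and even-indexed from the other, set $y=\tfrac{m_{2j-1}}{n_{2j-1}}\sum_k x_{2k}$, $z=\tfrac{m_{2j-1}}{n_{2j-1}}\sum_k x_{2k-1}$; then $\sum_k f_k$ being $j$-h-special gives $\|y\pm z\|\gtrsim 1$ for the right sign, while Proposition \ref{dep-est} (applied to the $\eta=1$ alternating case and to $\eta=0$) gives $\|T y - a y\|$, $\|Tz-a'z\|$ small, hence $\|T(y+ z)-(a y+a' z)\|$ small; comparing with $a\neq a'$ contradicts $\|y+z\|\gtrsim1$ and boundedness of $T$. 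This forces all local scalars to coincide with a single $a_0$.

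Given such $a_0$, set $Q=T-a_0I_0$. By construction, for every h-block subspace $W$ and every $\epsilon>0$ there is a normalized h-block $x\in W$ with $\|Qx\|<\epsilon$, so $Q$ is not bounded below on any h-block subspace, i.e. $Q$ is horizontally strictly singular by definition. Hence $T=a_0I_0+Q$ as claimed.

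The main obstacle is the step producing the single scalar $a_0$, i.e.\ showing the ``effective scalar'' along RIS-averages is well-defined and subspace-independent. In the classical setting this rests on $(x_k)_k$ being block with respect to a Schauder decomposition and on the RIS vectors being essentially supported like their generating functionals; here, as Remark \ref{obstacle} flags, $\rng_h(x_k)\neq\rng_h(f_k)$ in general because the projections $I_{[1,N]}$ are not uniformly bounded, so one must use instead condition (3) of dependent sequences and the g-separation clause (5), and verify that Proposition \ref{dep-est}'s conclusion is genuinely applicable to $Ty$-type vectors (not just to $y$). One also has to check that the seminormalization $\|y\pm z\|\gtrsim 1$ survives: the lower bound comes directly from evaluating the $j$-h-special functional $m_{2j-1}^{-1}\sum_k f_k$, which requires $\rng_h(x_{k-1})<\rng_h(f_k)<\rng_h(x_{k+1})$ --- exactly condition (2) of dependent sequences --- so everything is in place, but assembling it cleanly is the technical heart of the argument.
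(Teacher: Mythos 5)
There is a genuine gap at the central step. Your argument addresses only the \emph{uniqueness} (subspace-independence) of the approximate eigenvalue: you assume that ``local scalars'' $a,a'$ attached to RIS-averages exist and then rule out $a\neq a'$ with an interleaved dependent sequence. But the hard part of the theorem is the \emph{existence} of such a scalar, i.e.\ showing that $T$ maps a suitable sequence of $\ell_1$-averages $(y_i)$ asymptotically into their own spans, $\dist(Ty_i,\R y_i)\to 0$. Nothing in your sketch produces this: seminormalization of RIS-vectors via Proposition \ref{basic-inequality-cor} does not imply that $Ty_i$ is close to $\R y_i$, and your appeal to Proposition \ref{dep-est} to conclude that ``$\|Ty-ay\|$, $\|Tz-a'z\|$ are small'' misreads that proposition, which only bounds norms of (alternating) averages of the vector part of a dependent sequence, not images under $T$. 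The paper's proof of exactly this existence step is where the new machinery enters: one first arranges, via Proposition \ref{op-xs}(1), a normalized h-block $(z_i)$ with $(Tz_i)$ also h-block, takes $\ell_1$-averages $(y_i)$, and argues by contradiction. If $\dist(Ty_i,\R y_i)\not\to 0$, Lemma \ref{key}(2) (the substitute for the missing Schauder decomposition of $(X_s)_s$, applied with $\|y_i\|_{h,\infty}\to 0$) yields h-block functionals with $f_n(Ty_n)>\delta$ but $|f_n(y_n)|\le 2^{-n}$; these feed a $(0,1,j)$-dependent sequence for which the $\eta=0$ case of Proposition \ref{dep-est} gives $\|x\|\le 180/m_{2j-1}^2$ while $f(Tx)>\delta/m_{2j-1}$, contradicting boundedness of $T$ for large $j$. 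Without Lemma \ref{key}(2) you cannot even start this, because Hahn--Banach alone does not give functionals that norm $Ty_n$ while nearly annihilating $y_n$ here.

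A secondary point: once $\|Ty_i-a_0y_i\|\to 0$ along a subsequence, the paper concludes by observing that $T-a_0I_0$ is compact on the span of that subsequence and invoking Theorem \ref{hi}: a horizontally HI space cannot support an operator that is compact on one infinite-dimensional h-block subspace yet bounded below on another. Your alternative ending --- finding near-kernel vectors of $Q$ in \emph{every} h-block subspace --- would also suffice, but it forces you to rerun the (missing) existence argument in each subspace and then prove the scalars agree, which is strictly more work than the HI globalization. Your closing paragraph correctly identifies condition (3) of dependent sequences and Remark \ref{obstacle} as the technical pressure points, but identifying the obstacle is not the same as overcoming it; as written, the proposal does not constitute a proof.
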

\begin{proof} 
By Prop. \ref{op-xs} (1) for any $s\leq r\in\N$ the operator $(I_1+\dots+I_s)\circ T\circ (I-(I_1+\dots+I_r))$ is strictly singular, thus for any  $s\leq r\in\N$ and any $\epsilon>0$ there is a block $z\in X$ with $\|z\|=1$, $\supp_h(z)>r$ and $\|(I_1+\dots+I_s)(Tz)\|<\epsilon$. Applying this observation inductively and allowing some perturbation we pick a normalized h-block sequence $(z_i)_i$ such that $(Tz_i)_i$ is also h-block.

By Lemma \ref{ell-averages-existence} pick an h-block sequence $(y_i)_i$ of $\ell_1$-averages of increasing length in the block subspace spanned by $(z_i)_i$. We claim that $\dist
(Ty_i,\R y_i)\xrightarrow{i\to\infty} 0$. 

Assume the contrary. Then  $\inf_{i\in J}\dist (Ty_i, \R y_i)>0$ for some  infinite $J\subset\N$. By Lemma \ref{key} (2), passing to a subsequence if necessary and allowing some perturbation we can assume that there is h-block $(f_n)_n\subset K$ and $\delta>0$ with  $f_n(Ty_n)>\delta $ and $|f_n(y_n)|\leq 2^{-n}$ for all $n$. 

As in the proof of Lemma \ref{dep-ex}, using Remark \ref{ris-existence},  Lemma \ref{existence-g-separated} and the definition of $\sigma$, produce for any $j\in\N$ a $(0,1,j)$-dependent sequence
$(x_k,f_k)_{k=1}^{n_{2j-1}}$, picking for each $k=1,\dots,n_{2j-1}$ suitable
$(y_i^k)_{i=1}^{n_{j_k}}$ and $(f_i^k)_{i=1}^{n_{j_k}}$ from $(y_i)$ and $(f_i)$ and
letting $x_k=\frac{m_{j_k}}{n_{j_k}}\sum_{i=1}^{n_{j_k}}y_i^k$ and
$f_k=\frac{1}{m_{j_k}}\sum_{i=1}^{n_{j_k}}f_i^k$ (notice the choice here is simpler, as the condition $|f_k(x_k)|\leq 2^{-k}$  is automatic, therefore $d_k=1$ for all $k$ and thus $D=1$). For such $x_k,f_k$ we have
$f_k(Tx_k)>\delta$ for each $k$. Let $x=\frac{1}{n_{2j-1}}\sum_{k=1}^{n_{2j-1}}x_k$ and
$f=\frac{1}{m_{2j-1}}\sum_{k=1}^{n_{2j-1}}f_k$. Then $\|x\|\leq \frac{180}{m_{2j-1}^2}$ by Prop. \ref{dep-est}. On the other hand, since $f\in K$, we
have $\|Tx\|\geq f(Tx)>\frac{\delta}{m_{2j-1}}$, which for sufficiently large $j$ yields contradiction and thus proves that $\dist(\R y_i, Ty_i)\to 0$. 

Passing to a subsequence of $(y_i)$, find $a_0\in\R$ with
$\|Ty_i-a_0 y_i\|\to 0$. Thus for some infinite $J\subset\N$
the operator $(T-a_0 I_0)|_{\spa\{y_i:i\in
J\}}$ is compact. By Thm \ref{hi} the operator $T-a_0I_0$ is horizontally strictly singular. 
\end{proof}

We shall need next the following strong property of horizontally strictly singular operators, not obvious as $(X_s)_s$ do not form the Schauder decomposition of $X$. We profit here from the already shown bounded completeness of the basic sequence $(I_s)_{s=1}^\infty$. 

\begin{proposition}\label{hss-as-summable}
For any horizontally strictly singular $Q\in \mathscr{L}( X)$ let $(a_s)_{s=1}^\infty\subset\R$ be the scalars picked by Prop. \ref{op-xs} (3). Then the series $\sum_{s=1}^\infty a_sI_s$ is convergent in $\mathscr{L}( X)$. 
\end{proposition}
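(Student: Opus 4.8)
The plan is to reduce the convergence of $\sum_s a_sI_s$ in $\mathscr{L}(X)$ to the boundedness of the partial sums of $(a_sI_s)$, and then invoke the bounded completeness of $(I_s)_{s=1}^\infty$ established in Corollary \ref{main1}. Concretely, set $T_A=\sum_{s\in A}a_sI_s$ for finite $A\subset\N$; since by Proposition \ref{op-xs}(3) each $a_sI_s$ agrees with $Q\circ I_s$ modulo a strictly singular operator, we have $T_A-Q\circ I_A=\sum_{s\in A}(a_sI_s-Q\circ I_s)\in\mathscr{SS}(X)$, and in particular, for an \emph{interval} $A=(p,q]$, Theorem \ref{main0}(1) gives $\|T_A\|\le 2^7\dist(T_A,\mathscr{SS}(X))\le 2^7\|Q\circ I_A\|\le 2^7\|Q\|$ because $I_A$ is a norm-one projection. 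So the partial sums $(T_{[1,N]})_N$ are uniformly bounded, and moreover for any interval $A$ the "tail" operator $T_A$ has norm at most $2^7\|Q\|$.

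Next I would upgrade "uniformly bounded partial sums" to "norm-Cauchy partial sums." The obstacle is precisely that $(X_s)_s$ is not a Schauder decomposition of $X$, so a bound on $\|T_A\|$ for intervals $A$ does not immediately give $\|T_A\|\to 0$ as $\min A\to\infty$. Here is where I expect the main work to lie. Suppose toward a contradiction that $\|T_{(p_n,q_n]}\|\not\to 0$ along some sequence of intervals with $p_n\to\infty$; choose normalized $w_n\in X$ with $\|T_{(p_n,q_n]}w_n\|\ge\varepsilon$. I would like to replace $w_n$ by vectors that are h-block and, modulo small perturbation, have $\supp_h$ contained in $(p_n,q_n]$: since $T_{(p_n,q_n]}=(\sum_{s\in(p_n,q_n]}a_sI_s)$ factors through the "horizontal band" $I_{(p_n,q_n]}$ on both sides (it kills anything with $\supp_h$ disjoint from $(p_n,q_n]$, by definition of $I_s$), we may assume $\supp_h(w_n)\subset(p_n,q_n]$ after applying $I_{(p_n,q_n]}$ and using $\|I_{(p_n,q_n]}w_n-w_n\|$-control only where it is legitimate — this is the delicate point, and it is exactly the kind of issue Lemma \ref{key} is designed to handle. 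Passing to a subsequence, the $w_n$ become an h-block sequence (their horizontal ranges march off to infinity), and $Q$ applied to an h-block sequence behaves well.

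The contradiction then comes from bounded completeness together with horizontal strict singularity of $Q$, as follows. By Theorem \ref{main0}(2), the sequence $(I_s)_s$ is boundedly complete, so $\sum_s a_sI_s$ converges in $\mathscr{L}(X)$ \emph{if and only if} $\sup_N\|\sum_{s=1}^N a_sI_s\|<\infty$ \emph{and} the partial sums are Cauchy; equivalently, since $(I_s)$ is a boundedly complete basic sequence in the Banach space $\mathscr{L}(X)$ — no, more carefully: $(I_s)$ is boundedly complete \emph{as a basic sequence in its closed span $V\subset\mathscr{L}(X)$}, which means a series $\sum a_sI_s$ with bounded partial sums automatically converges in $V$. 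We have just shown the partial sums $T_{[1,N]}$ are uniformly bounded (by $2^7\|Q\|$), so bounded completeness of $(I_s)_s$ in $V$ yields directly that $\sum_s a_sI_s$ converges in $\mathscr{L}(X)$. Thus the second and third paragraphs collapse: the uniform bound on $\|T_{[1,N]}\|$ from Theorem \ref{main0}(1) combined with bounded completeness of $(I_s)_s$ from Corollary \ref{main1}/Theorem \ref{main0}(2) is all that is needed, and horizontal strict singularity of $Q$ enters only through Proposition \ref{op-xs}(3) which produced the $a_s$ in the first place. The one genuine subtlety to write out carefully is the passage from $\dist(T_A,\mathscr{SS}(X))\le\|Q\circ I_A\|$ to $\|Q\circ I_A\|\le\|Q\|$, i.e.\ that $T_A-Q\circ I_A$ is strictly singular — which is immediate by summing finitely many operators from Proposition \ref{op-xs}(3) — and the observation that $\|I_A\|=1$ for $I_A$ a partial sum over an interval, which follows from Remark \ref{xs-remark} together with the fact that the unit vector basis of $X$ is bimonotone and unconditional with respect to the horizontal decomposition in the relevant sense.
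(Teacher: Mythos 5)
Your reduction of the statement to the uniform boundedness of the partial sums $T_{[1,N]}=\sum_{s=1}^N a_sI_s$, via the bounded completeness of $(I_s)_s$ from Corollary \ref{main1}, is exactly the paper's strategy, and the observation that $T_A-Q\circ I_A\in\mathscr{SS}(X)$ (hence $\dist(T_A,\mathscr{SS}(X))\le\|Q\circ I_A\|$) is correct. The gap is in the very next step: you claim $\|Q\circ I_A\|\le\|Q\|$ ``because $I_A$ is a norm-one projection.'' This is false for intervals $A$ with more than one element. Only the individual projections $I_s$ have norm one (Remark \ref{xs-remark}, because $f|_{J_s}\in K_s$ for every $f\in K$); the set $J_A$ is not an interval of $\N$ and $K$ is not closed under restriction to it. In fact $\sup_N\|I_{[1,N]}\|=\infty$: this is precisely the statement that $(X_s)_s$ is not a Schauder decomposition of $X$, which the paper emphasizes throughout and records explicitly in Remark \ref{obstacle} (``projections $(I_{[1,N]})_N$ are not uniformly bounded''). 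Concretely, for an $N$-g-special pair the vector $\sum_{s=1}^N(x_{2s-1}-x_{2s})$ has norm $O(N/m_{2j})$ by Proposition \ref{crucial}(2), while its image under $I_{[1,N]}$ is $\sum_{s=1}^N x_{2s-1}$, of norm at least $N$ by Proposition \ref{crucial}(1). So your chain of inequalities yields no uniform bound on $\dist(T_{[1,N]},\mathscr{SS}(X))$, and the whole argument collapses at the point where the real difficulty of the proposition lives; the tentative repair sketched in your second paragraph (perturbing $w_n$ to h-block vectors) is abandoned and would not produce the bound either.

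The paper instead obtains the uniform bound without ever estimating $\|I_{[1,N]}\|$: it first shows $a_s\to 0$ (using horizontal strict singularity of $Q$ directly: a nonzero cluster point $a$ of $(a_s)$ would give a normalized h-block sequence $(z_i)$ with $Qz_i\to 0$ yet $\|Qz_i-az_i\|<a/2$). Then, for fixed $M$, it chooses $N\ge M$ with $M|a_s|\le 1$ for $s\ge N$ and an $N$-g-special pair whose vectors nearly diagonalize $Q$ ($\|(Q-a_sI_s)x_s\|\le 2^{-s}$ for the relevant indices); Proposition \ref{crucial}(3) supplies a norm-one vector $x$ with $\|\sum_{s=1}^Ma_sI_s\|\le 40\|\sum_{s=1}^Ma_sI_sx\|$, and a direct estimate of $\|Qx\|$ gives $\|\sum_{s=1}^Ma_sI_s\|\le 40(\|Q\|+5)$. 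That use of g-special pairs is the missing ingredient you would need to make your argument work.
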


\begin{proof} 
We show first that $a_s\to 0$ as $s\to\infty$. Assume that $|a_s|>\delta$ for some $\delta>0$ and $s\in J$, $J\subset \N$ infinite. Let $a\neq 0$ be a cluster point of $(a_s)_{s\in J}$.  Choose $(s_n)_n\subset J$ with $|a_{s_n}-a|<a/8^n$ for all $n\in \N$.  Pick a normalized block sequence $(x_n)_n$ with $x_n\in  X_{s_n}$ (thus $(x_n)_n$ is h-block) and $\|Qx_n-a_{s_n}x_n\|<a/8^n$, thus $\|Qx_n-ax_n\|<a/4^n$ for each $n\in \N$. As $Q$ is horizontally strictly singular, there is a normalized h-block sequence $(z_i)_i$ of $(x_n)_n$ with $Qz_i\to 0$. On the other hand $\|Qz_i-az_i\|<a/2$, which yields the contradiction.

Now we prove that  $(\sum_{s=1}^Ma_sI_s)_M$ is bounded in $\mathscr{L}(X)$. This ends the proof by Cor. \ref{main1}. 

Fix $M\in\N$. Pick $M\leq N\in\N$ big enough to ensure $M|a_s|\leq 1$ for $s\geq N$. By Remark \ref{xs-remark} and Lemma \ref{ts-exact}  pick an $N$-g-special pair $((x_1,\dots,x_{2N} ), (f_1,\dots,f_{2N}))$ with $\supp(x_1)>M_N$ and $\|(Q-a_sI_s)x_s\|\leq 1/2^s$ for each $s=1,\dots,M, N+1,\dots, N+M$. 
By Prop. \ref{crucial} (3) for some $(b_s)_{s=1}^M\subset [-1,1]$ the vector $x=\sum_{s=1}^Mb_s(x_{2s-1}-x_{2s})$ satisfies $\|x\|\leq 1$ and  $\|\sum_{s=1}^Ma_sI_s\|\leq 40\|\sum_{s=1}^Ma_sI_sx\|$. On other hand \begin{align*}
    \|Q\|&\geq \|Qx\|= \|\sum_{s=1}^MQb_sx_s+\sum_{s=N+1}^{N+M}Qb_{s-N}x_s\|
    \\
    &\geq\|\sum_{s=1}^Ma_sb_sx_s\|-\|\sum_{s=1}^M(Q-a_sI_s)b_sx_s\|
    -\|\sum_{s=N+1}^{N+M}a_sb_{s-N}x_s\|-\|\sum_{s=N+1}^{N+M}(Q-a_sI_s)b_{s-N}x_s\|
    \\
    &\geq \|\sum_{s=1}^Ma_sI_sx\|-1-3M\max_{N+1\leq s\leq N+M}|a_s|-1
    \\
    &\geq \|\sum_{s=1}^Ma_sI_sx\|-5
\end{align*}
It follows that $\|\sum_{s=1}^Ma_sI_s\|\leq 40(\|Q\|+5)$, which ends the proof. 
\end{proof}

\begin{corollary}\label{main2}
  $\mathscr{L}(X)=\overline{\spa }\{I_s: s=0,1,2,\dots\}\oplus \mathscr{SS}(X)$.
\end{corollary}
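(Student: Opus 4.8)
The plan is to assemble the corollary directly from the structural results of Sections~3--5. By Theorem~\ref{scalar+hss}, every $T\in\mathscr{L}(X)$ splits as $T=a_0I_0+Q$ with $Q$ horizontally strictly singular. By Proposition~\ref{op-xs}(2), for each $s\geq 1$ there is $a_s\in\R$ with $Q\circ I_s-a_sI_s\in\mathscr{SS}(X)$, and by Proposition~\ref{hss-as-summable} the series $\sum_{s=1}^\infty a_sI_s$ converges in $\mathscr{L}(X)$; call its sum $V_Q\in\overline{\spa}\{I_s:s\geq 1\}$. I would then set $S=Q-V_Q$ and show $S\in\mathscr{SS}(X)$, which gives $T=a_0I_0+V_Q+S\in\overline{\spa}\{I_s:s=0,1,2,\dots\}+\mathscr{SS}(X)$.

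To see $S\in\mathscr{SS}(X)$, I would apply Proposition~\ref{op-ss}: it suffices to check $S\circ I_t\in\mathscr{SS}(X)$ for every $t\geq 1$. Fix $t$. Then $S\circ I_t=(Q-\sum_{s=1}^\infty a_sI_s)\circ I_t=Q\circ I_t-a_tI_t$, because $I_s\circ I_t=0$ for $s\neq t$ and $I_t\circ I_t=I_t$ (the subspaces $X_s$ are spanned by disjoint blocks of the basis, so the projections are mutually orthogonal), and the convergence of the series in operator norm lets one pass $\circ I_t$ through the sum. By the defining property of $a_t$ from Proposition~\ref{op-xs}(3), $Q\circ I_t-a_tI_t\in\mathscr{SS}(X)$, so $S\circ I_t\in\mathscr{SS}(X)$ as required; hence $S\in\mathscr{SS}(X)$.

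It remains to verify that the sum is direct, i.e. $\overline{\spa}\{I_s:s=0,1,2,\dots\}\cap\mathscr{SS}(X)=\{0\}$. Suppose $W=\sum_{s=0}^\infty c_sI_s$ (a convergent series, in particular $W=c_0I_0+\sum_{s\geq 1}c_sI_s$) lies in $\mathscr{SS}(X)$. First, $W$ restricted to any $X_s$ with $s\geq 1$ acts as $(c_0+c_s)\mathrm{Id}_{X_s}$ plus a strictly singular operator coming from the tail, so if $W$ is strictly singular then $c_0+c_s=0$ for every $s\geq 1$; combined with $c_s\to 0$ (which holds since the series converges, or by the argument in Proposition~\ref{hss-as-summable}) this forces $c_0=0$ and then $c_s=0$ for all $s\geq 1$. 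Alternatively, and more cleanly, one can invoke Corollary~\ref{main1}: the sequence $(I_s)_{s=0}^\infty$ is a basic sequence in $\mathscr{L}(X)$, and by Theorem~\ref{main0}(1) together with $\dist(I_0,\overline{\spa}\{I_s:s\geq 1\})=1$, any nonzero element of $\overline{\spa}\{I_s:s=0,1,\dots\}$ has distance to $\mathscr{SS}(X)$ bounded below by a positive multiple of its norm, hence cannot be strictly singular. Thus the sum is direct, and $\mathscr{L}(X)=\overline{\spa}\{I_s:s=0,1,2,\dots\}\oplus\mathscr{SS}(X)$.

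The only genuinely non-routine point is the directness of the sum, and even that is essentially already packaged in Theorem~\ref{main0}(1) extended to include $I_0$; the rest is bookkeeping with the orthogonality relations $I_sI_t=\delta_{st}I_s$ and the operator-norm convergence of $\sum a_sI_s$. The main obstacle, were the earlier sections not available, would be Proposition~\ref{hss-as-summable}, i.e. showing the coefficient series actually converges rather than merely that the partial sums approximate $Q$ modulo strictly singular operators; but we may quote it here.
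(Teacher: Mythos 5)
Your proposal is correct and follows essentially the same route as the paper: decompose $T=a_0I_0+Q$ via Theorem~\ref{scalar+hss}, extract the coefficients $a_s$ from Proposition~\ref{op-xs}(2), sum them using Proposition~\ref{hss-as-summable}, verify the remainder is strictly singular via Proposition~\ref{op-ss} using $I_sI_t=\delta_{st}I_t$, and obtain directness from Theorem~\ref{main0}(1) together with $\dist(I_0,\overline{\spa}\{I_s:s\geq 1\}+\mathscr{SS}(X))=1$. The only cosmetic slip is the stray reference to ``Proposition~\ref{op-xs}(3)'' (that proposition has only two parts, and you cite part~(2) correctly elsewhere).
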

\begin{proof}
For arbitrary $T\in\mathscr{L}( X)$ pick suitable $(a_s)_{s=0}^\infty$, $a_0$ by Thm \ref{scalar+hss}, $(a_s)_{s=1}^\infty$ by Prop. \ref{op-xs} for $T-a_0I_0$.  
Let $R=\sum_{s=0}^\infty a_sI_s\in\mathscr{L}( X)$, well-defined by Prop. \ref{hss-as-summable}. 

By the choice of $(a_s)_{s=1}^\infty$ for any $s\in\N$ the operator $(T-R)\circ I_s=(T-a_0I_0)\circ I_s-a_sI_s$ is strictly singular. Therefore $T-R\in\mathscr{SS}(X)$ by Prop. \ref{op-ss}. Therefore Thm \ref{main0} (1) ends the proof (notice that  $\dist(I_0, \overline{\spa }\{I_s: s=1,2,\dots\}+\mathscr{SS}(X))=1$). \end{proof}

The main theorem follows from Corollaries \ref{main1} and \ref{main2}.

\ 
\end{document}